\newcommand{\prob}{\mathbb{P}}
\newcommand{\exptn}{\mathbb{E}}
\newcommand{\dtv}{{\rm d}}
\newcommand{\unif}{\mathsf{U}}
\newcommand{\mbbo}{\mathbbm{1}}
\newcommand{\bdelta}{\boldsymbol{\delta}}
\DeclareMathOperator*{\argmin}{argmin}
\newcommand{\sss}{\scriptscriptstyle}
\newcommand{\norm}[1]{\| #1 \|}
\newcommand{\inv}{{-1}}
\newcommand{\finv}{F^{\leftarrow}}
\newcommand{\probconv}{\stackrel{\sss\mathbb{P}}{\longrightarrow}}
\newcommand{\asconv}{\stackrel{a.s.}{\longrightarrow}}
\newcommand{\vconv}{\stackrel{v}{\to}}
\newcommand{\eqd}{\stackrel{d}{=}}
\newtheorem{thm}{Theorem}[section]
\newtheorem{propn}[thm]{Proposition}
\newtheorem{lemma}[thm]{Lemma}
\newtheorem{cor}[thm]{Corollary}
\theoremstyle{remark}
\newtheorem{remark}[thm]{Remark}
\newtheorem{example}{Example}[section]
\theoremstyle{definition}
\newtheorem{defn}[thm]{Definition}
\numberwithin{equation}{section}
\newcommand{\PLFIT}{PLFit}
\newcommand{\e}{{\mathrm{e}}}
\newcommand{\TERM}[1]{{\sf{T}}^{\sss(#1)}}
\newcommand{\eqn}[1]{\begin{equation}#1\end{equation}}
\newcommand{\eqan}[1]{\begin{align}#1\end{align}}
\newcommand{\vep}{\varepsilon}
\newcommand{\nn}{\nonumber}
\newcommand{\Iterm}[1]{{\rm I}_n^{\sss(#1)}}
\newcommand{\RvdH}[1]{\todo[color=magenta, inline]{Remco: #1}}
\newcommand{\BZ}[1]{\todo[color=green, inline]{Bert: #1}}
\begin{document}

\begin{frontmatter}
\title{Consistency of the \PLFIT{} estimator\\ for power-law data}
\runtitle{Consistency of \PLFIT{}}

\begin{aug}
\author{\fnms{Ayan} \snm{Bhattacharya} \thanksref{t1,t2}
\ead[label=e1]{ayanbhattacharya.isi@gmail.com}}
\author{\fnms{Bohan} \snm{Chen} \thanksref{t2}
\ead[label=e2]{chenbohan1988@gmail.com}}
\author{\fnms{Remco} \snm{van der Hofstad} \thanksref{t3}
\ead[label=e3]{r.w.v.d.hofstad@tue.nl}}
\and
\author{\fnms{Bert} \snm{Zwart}\thanksref{t2}
\ead[label=e4]{Bert.Zwart@cwi.nl}}

\thankstext{t1}{Partially supported by Polish National Science Centre Grant
\#
2018/29/B/ST1/00756 (2019-2022)
}
\thankstext{t2}{Partially supported by the Netherlands Organisation for Scientific Research (NWO) VICI grant \#
639.033.413}
\thankstext{t3}{Partially supported by the Netherlands Organisation for Scientific Research (NWO) through the Gravitation Networks grant 024.002.003.  }

\runauthor{Bhattacharya, Chen, v.d.\ Hofstad and Zwart}

\affiliation{Wroc\l{a}w University of Science and Technology\thanksmark{t1}, Centrum Wiskunde \& Informatica\thanksmark{t2} and Eindhoven University of Technology\thanksmark{t3}.}

\address{Wroc{\l}aw University of Science and Technology\\
Department of Applied Mathematics\\
Faculty of Pure and Applied Mathematics\\
wyb. Stanis{\l}awa Wyspia\'{n}skiego 27, 50-370 Wroc{\l}aw,
Poland\\
 \printead{e1} }

\address{Centrum Wiskunde \& Informatica\\
P.O. Box 94079\\
1090 GB Amsterdam, Netherlands\\
\printead{e2}\\
\printead{e4}}

\address{Eindhoven University of Technology\\
PO Box 513\\
5600 MB Eindhoven, Netherlands\\
\printead{e3}}

\end{aug}

\begin{abstract}
We prove the consistency of the Power-Law Fit (\PLFIT{}) method proposed by Clauset et al.\ \cite{clauset:shalizi:newman:2009} to estimate the power-law exponent in data coming from a distribution function with regularly-varying tail. In the complex systems community, \PLFIT{} has emerged as the method of choice to estimate the power-law exponent.
Yet, its mathematical properties are still poorly understood. 

The difficulty in \PLFIT{} is that it is a minimum-distance estimator. It first chooses a threshold that minimizes the Kolmogorov-Smirnov distance between the data points larger than the threshold and the Pareto tail, and then applies the Hill estimator to this restricted data. Since the number of order statistics used is {\em random}, the general theory of consistency of power-law exponents from extreme value theory does not apply. Our proof consists in first showing that the Hill estimator is consistent for general intermediate sequences for the number of order statistics used, even when that number is random. Here, we call a sequence {\em intermediate} when it grows to infinity, while remaining much smaller than the sample size. The second, and most involved, step is to prove that the optimizer in \PLFIT{} is with high probability an intermediate sequence, unless the distribution has a Pareto tail above a certain value. For the latter special case, we give a separate proof.
\end{abstract}

\begin{keyword}[class=MSC]
\kwd[Primary ]{62G20}
\kwd[; secondary ]{62F10, 62F12, 62F20, 62G30}
\end{keyword}

\begin{keyword}
Regular variation, Heavy-tailed distribution, Consistency, Minimum-distance estimator, \PLFIT{} estimator
\end{keyword}

\end{frontmatter}


\section{Introduction and motivation}
\label{sec-intro}

The Power-Law Fit (\PLFIT{}) method, proposed by Clauset et al.\ \cite{clauset:shalizi:newman:2009}, has emerged as a popular method 
to estimate the power-law exponent in data coming from a distribution function with regularly varying tail. Despite its popularity, the mathematical foundations of it have not received much attention. In particular, it is not yet known whether the estimate arising from this method is in general consistent. This is contrary to many other methods for estimating the power-law exponent, see e.g.\ \cite{voitalov:vanderhoorn:vanderhofstad:krioukov:2018} for a detailed introduction in the methods for estimating power-law exponents and their rigorous properties.


Specifically,
\label{sec-motiv}
let $F\colon [0, \infty) \to [0,1]$ be a distribution function of a non-negative random variable, satisfying that, for every $x > 0$,
	\begin{align}
	\lim_{t \to \infty} \frac{1 - F(t x)}{1 - F(t)} = x^{-\alpha}. \label{eq:defn:regvar:tail:dist}
	\end{align}
Then we say that $F$ has a regularly varying tail, or equivalently that $\overline{F} = 1 - F$ is a regularly varying tail distribution. We write $\overline{F} \in {\rm RV}_{-\alpha}$ to denote that $F$ satisfies \eqref{eq:defn:regvar:tail:dist}. It follows from Karamata's theorem (\cite[Theorem~2.1 and Corollary~2.1(ii)]{resnick:2007}) that
	\eqn{
	\label{L-def-RV}
	\overline{F}(x) = x^{-\alpha} L(x),
	}
where $L \in {\rm RV}_0$ is a slowly varying function.  The Pareto, log-gamma and Cauchy distributions are some of the well-known families of distributions with a regularly varying tail (bear in mind though that we restrict to non-negative random variables here). Throughout the article, we assume that $F$ is supported on $[1, \infty )$ and is continuous. 

We next explain the \PLFIT{} method. In a fundamental work \cite{hill:1975}, Bruce M. Hill proposed a consistent estimator for the index of regular variation which is popularly known as Hill's estimator. Let $(X_i)_{i=1}^n$ be a random sample drawn from a distribution function $F$ such that $\overline{F}  \in {\rm RV}_{-\alpha}$.  Let $X_{1:n} \le X_{2:n} \le \ldots \le X_{n: n}$ denote the order statistics. We define Hill's estimator as
	\begin{align}
	\widehat{\alpha}_{n, k} := H^\inv_{n, k} = \Big( \frac{1}{k}  \sum_{i=1}^{k} \log \frac{X_{n- k + i : n}}{X_{n-k: n}} \Big)^\inv,
	\end{align}
and note that $\widehat{\alpha}_{n, k}$ is based on the $k \in \{1,2, \ldots, n-1 \}$ largest observations. Hill's estimator does not use the full data but only the largest few observations as they have the most information on the tail of the distribution.  Several other consistent estimators are known in the literature, such as Pickand's estimator, kernel estimator, peaks-over-threshold (POT) estimator, Q-Q estimator, etc.\ (see \cite{resnick:2007} and \cite{beirlant:goegebeur:segers:teugels:2006} for nice surveys). Throughout this article, we restrict ourselves to the Hill estimator, as the \PLFIT{} method does so.

The main issue with Hill's estimator is that the choice of $k$, the number of largest order statistics to be used to estimate $\alpha$ is subjective. In the pure Pareto case, the choice $k=n$ gives us the Maximum Likelihood (MLE) estimator. However, in the semiparametric setting in \eqref{eq:defn:regvar:tail:dist}, the choice $k=n$ is obviously bad. Under the semiparametric assumption, the choice of $k_n$ plays an important role unless the population distribution is Pareto, which is unrealistic in many practical settings. In a seminal work \cite{mason:1982}, Mason has shown that $\widehat{\alpha}_{n, k}=H_{n, k_n}^{-1}$ estimates $\alpha$ consistently if $(k_n)_{n\geq 1}$ is an intermediate sequence.  By an intermediate sequence, we mean that $k_n \uparrow \infty$ and $k_n = o(n)$. However, there still are many choices for $k_n$, such as $k_n = n^{\beta}$ for some $\beta \in (0, 1)$, $k_n = (\log n)^\beta$, etc. One naive way to choose $k_n$ is by looking at the point of stabilization when the Hill estimator is plotted against the number of order statistics used. This is popularly known as the Hill plot. However, the fluctuations in the Hill plot often make this choice difficult  when $L$ is not a constant (see the so-called {\em Hill horror plot} \cite{resnick:2007}). There are many prescriptions to improve the plot (such as the smoothing Hill plot, alternative Hill plot etc.\ in \cite{resnick:2007}). The choice where $k_n$ minimizes the sum of squares of the asymptotic bias and its standard error 
has also been proposed in \cite{haeusler:teugels:1985,hall:1982,hall:welsh:1984}. Finally, a data-driven or data-adaptive choice of $k_n$ is proposed in \cite{hall:welsh:1985} under some parametric assumptions on $\overline{F}$. 
Several other choices of $k_n$ were proposed based on a double bootstrap method (see \cite{danileson:dehaan:peng:devries:2001, draisma:dehaan:peng:pereira:2000, voitalov:vanderhoorn:vanderhofstad:krioukov:2018, qi:2008} and the references therein).

Clauset et al.\ \cite{clauset:shalizi:newman:2009} propose to choose the cut-off  $k_n$ as the value that minimizes the Kolmorogorov-Smirnov distance between the fitted power-law and the empirical conditional distribution function associated to the $k_n$ largest observations, assuming that this distribution is pure Pareto. To explain this in more detail, define
	\begin{align}
	\label{eq:defn:DNK}
	D_{n, k} := \sup_{y \ge 1} \Big| \frac{1}{k} \sum_{i=1}^{k} \mbbo_{(y, \infty)} \Big( \frac{X_{n-k + i: n}}{X_{n- k : n}} \Big)- y^{-\widehat{\alpha}_{n, k}}  \Big|,
	\end{align}
which is the Kolmogorov distance between the empirical distribution of the sequence $(X_{n-k + i: n}/X_{n- k : n})_{i=1}^k$ and the pure Pareto distribution on $[1,\infty)$,
and let $\kappa_n^\star := \argmin_{1 \le k \le n} D_{n, k}$. Then, Clauset et al.\ \cite{clauset:shalizi:newman:2009} propose $\widehat{\alpha}_{n, \kappa_n^\star}$ to estimate $\alpha$. This estimation procedure will be referred to as the \PLFIT{} method, and we shall refer to $\widehat{\alpha}_{n, \kappa_n^\star}$ as the \PLFIT{} estimator of $\alpha$. The \PLFIT{} estimator gives a quantitative estimate for the choice of $k$ rather than a choice based on the eyeballing technique in the Hill plot.

The \PLFIT{} estimator has become popular across a broad range of academic disciplines, as exemplified by the large number of citations to \cite{clauset:shalizi:newman:2009}.
%
%
However, a mathematical justification for its use is still lacking in the literature. One of the main reasons for this is that (a) it is not clear that the choice of $k_n$ in the \PLFIT{} method is intermediate; and (b) the sequence is {\em random}, so that most proof techniques that show properties of the Hill estimator do not apply. Bear also in mind that the $y^{-\widehat{\alpha}_{n, k}}$ term present in \eqref{eq:defn:DNK} is related to the pure Pareto distribution, rather than the semi-parametric form in \eqref{eq:defn:regvar:tail:dist} that we rely on.

Consistency is the most basic desirable property of an estimator, and has not yet been properly addressed in the literature. It can be expected that \PLFIT{} is a consistent estimator when $F$ is a Pareto distribution function. Surprisingly, even this fact is far from being obvious and not yet known in the literature. Consistency would be an easy consequence of \cite[Corollary~2.2]{drees:janssen:resnick:wang:2018}, but the corollary uses some a.s.\ uniqueness property for the minimum of a certain Gaussian process, which still lacks a rigorous mathematical proof, even though simulations do confirm it (see \cite[Remark~2.3]{drees:janssen:resnick:wang:2018}).

A reason behind the lack of mathematical results is that \PLFIT{} is an example of a {\em minimum distance estimation} (MDE) procedure, in which a criterion function is minimized over the parameter space. The criterion function is a functional measuring the distance between the empirical distribution and a parametric family of distributions. Popular choices of criterion functions include the Cram\'er-von Mises, the Kolmogorov-Smirnov (KS), and the Anderson-Darling criterion. MDE is known in the statistics literature for its robustness, which means that the estimate is not much affected by the small departure of the population distribution from the parametric family.
The literature on the asymptotics of MDE based on KS distance is very limited, even under parametric assumptions, due to many obstacles (see for example \cite[Section~3.5]{jurevckov:sen:1996} for a nice discussion on the difficulties and challenges). It makes the asymptotic study of \PLFIT{} technically challenging. Uniqueness of minimizers is for example an issue which can not be resolved without additional assumptions, cf.\ \cite{jurevckov:sen:1996}. Within the context of extreme value theory this has also been recognized in \cite{drees:janssen:resnick:wang:2018}.

The main contribution of this article is to prove consistency of \PLFIT{} estimator 
under the semi-parametric assumption stated in \eqref{eq:defn:regvar:tail:dist} on $F$, as summarized in the following theorem. In its statement, we write $F^{\leftarrow}$ for the generalized inverse of $F$, as well as $\probconv$ for convergence in probability:

\begin{thm}[Consistency of \PLFIT{}]
\label{thm:consistency:PLFIT}
Suppose that $\overline{F} \in {\rm RV}_{-\alpha}$, and that $\overline{F}$  and $F^{\leftarrow}$ are continuous. Then $\widehat{\alpha}_{n, \kappa_n^\star} \probconv \alpha$ as $n \to \infty$, so that the \PLFIT{} method is consistent.
\end{thm}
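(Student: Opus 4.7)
The plan splits the argument into two essentially independent blocks, mirroring the strategy sketched in the abstract. The first block is a \emph{uniform} consistency statement for the Hill estimator over intermediate windows: show that $\sup_{a_n\le k\le b_n}|\widehat\alpha_{n,k}-\alpha|\probconv 0$ for any deterministic $a_n\to\infty$ with $b_n=o(n)$, and deduce that $\widehat\alpha_{n,k_n}\probconv\alpha$ whenever $k_n$ is random and lies in such a window with probability tending to one. The second, harder block is to \emph{localize} the random minimizer $\kappa_n^\star$ in \eqref{eq:defn:DNK}, showing that it is intermediate in probability --- except in the case where $\overline F$ already coincides with a Pareto tail above some finite threshold $x_0$, which I would handle separately.

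For the uniform Hill step, the quantile transformation $X_i\eqd F^{\leftarrow}(1-U_i)$ with $U_i$ uniform, combined with the R\'enyi representation of uniform order statistics, rewrites $H_{n,k}$ as a sum of normalized log-exponentials plus a contribution from the slowly varying factor $L$ in \eqref{L-def-RV}. A Potter-type uniform bound on $L$ and a maximal inequality for partial sums of centred log-exponentials then give uniform convergence on $[a_n,b_n]$; consistency at a random intermediate $k_n$ follows by a routine sandwich argument.

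The bulk of the work is the localization of $\kappa_n^\star$. The lower bound $\kappa_n^\star\probconv\infty$ is the easier direction: for any fixed $K$, one has $\inf_{k\le K} D_{n,k}\ge 1/(2K)$ almost surely, because the empirical tail CDF has jumps of size $1/k$ while the fitted Pareto CDF is continuous, whereas $D_{n,k_n^\circ}\probconv 0$ along any deterministic intermediate sequence by tail empirical process convergence combined with the first block and the continuous mapping theorem. Hence $\kappa_n^\star>K$ with probability tending to one. The upper bound $\kappa_n^\star=o_{\mathbb P}(n)$ is the crux. Heuristically, whenever $k/n\to c>0$, the threshold $X_{n-k:n}$ concentrates at the finite quantile $u_c=F^{\leftarrow}(1-c)$, the rescaled empirical tail distribution converges uniformly to $\overline G_c(y):=\overline F(u_c y)/\overline F(u_c)$, and $\widehat\alpha_{n,k}$ converges to a deterministic $\alpha_c\in(0,\infty)$. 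Thus $D_{n,k}\to\sup_{y\ge 1}|\overline G_c(y)-y^{-\alpha_c}|$, which is strictly positive outside the Pareto-above-$u_c$ regime. A compactness and subsequence argument then rules out $\liminf \kappa_n^\star/n>0$, since otherwise the minimizer's value would exceed the vanishing $D_{n,k_n^\circ}$.

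The genuine obstacle, and the reason for treating it separately, is the Pareto-above-threshold case $\overline F(x)=c_0 x^{-\alpha}$ for all $x\ge x_0$. Here $\overline G_c(y)=y^{-\alpha}$ for every $c\le\overline F(x_0)$, so the subsequence argument above breaks down and $\kappa_n^\star$ can legitimately be of order $n$. To handle this, I would exploit the pure Pareto structure directly: conditionally on $X_{n-k:n}\ge x_0$, the ratios $(X_{n-k+i:n}/X_{n-k:n})_{i=1}^k$ are exactly the order statistics of $k$ i.i.d.\ Pareto$(\alpha)$ variables, so $\widehat\alpha_{n,k}$ is the Pareto MLE and is consistent for every such $k$; combined with $\kappa_n^\star\probconv\infty$ and $X_{n-\kappa_n^\star:n}\ge x_0$ with high probability, this yields $\widehat\alpha_{n,\kappa_n^\star}\probconv\alpha$ in this regime as well. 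I anticipate that the hardest technical ingredient overall is the uniform lower bound on $D_{n,k}$ in the non-Pareto, non-intermediate regime, since it requires quantifying how far $\overline G_c$ is from \emph{every} Pareto function, uniformly in $c$ over compact subsets of $(0,1]$.
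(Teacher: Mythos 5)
Your proposal follows essentially the same route as the paper: (i) consistency of the Hill estimator for random intermediate sequences; (ii) $\kappa_n^\star\probconv\infty$; (iii) $\kappa_n^\star/n\probconv 0$ outside the eventually Pareto regime; (iv) a separate argument when $\overline F$ is Pareto above some $x_0$. Two smaller differences are worth noting. Your lower bound on $D_{n,k}$ for $k\le K$ uses the classical $1/(2k)$ bound coming from the jump size of the empirical tail distribution function, whereas the paper uses $(X_{n:n}/X_{n-k:n})^{-\widehat\alpha_{n,k}}\ge \e^{-K}$ (Lemma~\ref{lemma:lower:bound:DKN}); both are valid, and yours is arguably more elementary. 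Your Pareto-case plan --- conditioning on $X_{n-k:n}\ge x_0$ so that the ratios are exactly Pareto order statistics --- is a clean variant of the paper's argument that the slowly-varying contributions $\TERM{2}_n(\kappa_n^\star)$ and $\TERM{3}_n(\kappa_n^\star)$ vanish.

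There is, however, a genuine gap in your upper-bound step, located precisely where the paper concentrates its effort. Pointwise convergence of $D_{n,[nt]}$ to the deterministic $\sup_{y\ge1}|Z(t,y)|$ for each fixed $t\ge\varepsilon$ does \emph{not} imply that $\min_{n\varepsilon\le k\le n}D_{n,k}$ stays bounded away from zero: a moving sequence $t_n\in[\varepsilon,1]$ could make $D_{n,[nt_n]}$ small even if every fixed-$t$ limit is positive. The ``compactness and subsequence argument'' you invoke actually needs a uniform tightness estimate for the process $t\mapsto\sup_{y\ge1}|Z_n(t,y)|$ on $[\varepsilon,1]$; this is Proposition~\ref{prop:tightness:suprema}, which the paper flags as the most delicate part of the whole proof and which does not follow from the pointwise statements you have. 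Relatedly, strict positivity of the limit \emph{uniformly} over $t\in[\varepsilon,1]$ is subtle because $t\mapsto\sup_{y\ge1}|Z(t,y)|$ is in general only lower semicontinuous, not continuous; Lemma~\ref{lemma:sublinear:growth:noteventually:pareto} resolves this by exploiting lower semicontinuity on a compact set together with Potter bounds, split according to the sign of ${\cal U}(x)$. Finally, in the eventually Pareto case your assertion that $X_{n-\kappa_n^\star:n}\ge x_0$ with high probability is itself a substantive localization claim --- the paper's Proposition~\ref{PROP-EVENT-PARETO} --- whose proof reuses the same positivity machinery restricted to $x\le x_0-\varepsilon$; it cannot be taken as given.
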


In light of the above discussion, our contribution can be seen as a rare example of an asymptotic result on MDE in a semi-parametric framework. In the next section, we explain how the proof of Theorem \ref{thm:consistency:PLFIT} is organised. Theorem \ref{thm:consistency:PLFIT} is proved in Section \ref{sec-proof-consistency-PLFIT}, for all cases except the eventually Pareto case (where the slowly varying function is constant above a certain value). Theorem \ref{thm:consistency:PLFIT} for the eventually Pareto case is proved in Section \ref{subsec:consistency:hill}.


We stress that we use a completely different approach from the one used in \cite{drees:janssen:resnick:wang:2018} based on Koml\'os-Major-Tusn\'ady approximation. Our approach is mostly influenced and motivated by \cite{mason:1982}. More precisely, we show that $\kappa_n^\star \probconv \infty$ under \eqref{eq:defn:regvar:tail:dist} and $\kappa_n^\star/n \probconv 0$ when the slowly varying function $L$ is not eventually constant. The main challenge and difficulty lies in the proof of these two results. We derive consistency of the \PLFIT{} estimator from the facts that $\kappa_n^\star$ is an intermediate sequence with high probability and $\widehat{\alpha}_{n,k_n} \probconv \alpha$ if $k_n$ is an intermediate sequence, even when it is random.

This paper is organized as follows. A more detailed description of our results and proof strategy is given in Section 2. Section 3 established the consistency of Hill's estimator for random intermediate sequences.
In Section 4, we establish that the cut-off value generated by \PLFIT{} is a random intermediate sequence. The proofs of several technical lemmas can be found in the appendix.




\section{Further results and strategy of proof}
\label{sec-fur-rws-strat}

We assume that we have a random sample $(X_i)_{i=1}^n$ from a distribution function $F$. For  for every $s \in [0,1]$, let $F^\leftarrow(s) = \inf\{ x \in (0,\infty)\colon F(x) \ge s \}$ denote the generalized inverse of $F$. Without loss of generality, we can assume that $F^{\leftarrow}(0) \ge 1$, because we can always add $1$ to each of the observations otherwise (recall that $F$ is the distribution function of a non-negative random variable).  Given the random sample $(X_i)_{i=1}^n$ of size $n$, we compute $D_{n,k}$ (see \eqref{eq:defn:DNK}) for all $ 1 \le k \le n-1$, and we let $\kappa_n^\star = \argmin_{1 \le k \le n} D_{n,k}$. Note that $\kappa_n^\star$ may not be unique and in that case we choose the smallest one among all the available choices. We denote the \PLFIT{} estimate for the index $\alpha$ by $\widehat{\alpha}_{n, \kappa_n^\star} = H_{n,\kappa_n^\star}^{-1}$.

This section is organised as follows. In Section \ref{sec-consistency-intermediate}, we investigate the consistency of $\widehat{\alpha}_{n, k_n}$ for general sequences $(k_n)_{n\geq 1}$ that are possibly {\em random}. In Section \ref{sec-prop-kappan*}, we discuss properties of the optimizer $\kappa_n^\star$ in the \PLFIT{} method. Combining these results, we prove Theorem \ref{thm:consistency:PLFIT} in Section \ref{sec-proof-consistency-PLFIT}.

\subsection{Consistency of the Hill estimator}
\label{sec-consistency-intermediate}
We start by investigating the consistency of Hill's estimator for {\em random} intermediate sequences.
We start by defining what we mean with a random intermediate sequence:

\begin{defn}[Random intermediate sequences]
\label{def-intermediate}
We call a sequence $(k_n)_{n\geq 1}$ a {\em random intermediate sequence} when $k_n\probconv \infty$ and $k_n/n\probconv 0$. In particular, it is allowed that $k_n$ is random and depends on the data.
\end{defn}

A key result in our analysis is the following theorem, that states that $\widehat{\alpha}_{n, k_n}\probconv \alpha$ when $(k_n)_{n\geq 1}$ a random intermediate sequence:

\begin{thm}[Consistency of Hill's estimator for random intermediate sequence]
\label{thm:consistency-Hill-random}
Let $(k_n)_{n\geq 1}$ be a random intermediate sequence. Then Hill's estimator with $k=k_n$ is consistent, i.e., $\widehat{\alpha}_{n, k_n}\probconv \alpha$.
\end{thm}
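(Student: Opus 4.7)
\bigskip

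\noindent\textbf{Proof plan.} The strategy is a sandwich argument that leverages the classical consistency result for \emph{deterministic} intermediate sequences (Mason \cite{mason:1982}) after upgrading it to a uniform statement. I would proceed in three steps.

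\textbf{Step 1 (Uniform consistency over deterministic ranges).} The plan is to prove that for any two deterministic sequences $(a_n)_{n\ge 1}$ and $(b_n)_{n\ge 1}$ with $a_n\to\infty$, $a_n\le b_n$, and $b_n/n\to 0$,
\[
\sup_{a_n\le k\le b_n} \bigl| \widehat{\alpha}_{n,k} - \alpha \bigr| \probconv 0.
\]
Writing $H_{n,k} = \frac{1}{k}\sum_{i=1}^{k}\log(X_{n-k+i:n}/X_{n-k:n})$, it suffices to show the corresponding statement for $H_{n,k}\probconv 1/\alpha$ uniformly, and then apply the continuous mapping theorem (noting that $H_{n,k}$ stays bounded away from $0$ with high probability on the relevant range). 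To obtain uniform convergence of $H_{n,k}$, I would use the standard reduction via the quantile transform and Karamata's representation $\overline{F}(x)=x^{-\alpha}L(x)$: writing $X_{n-k+i:n}\eqd F^{\leftarrow}(U_{n-k+i:n})$ for uniform order statistics $U_{\cdot:n}$, one can decompose
\[
\log\!\frac{X_{n-k+i:n}}{X_{n-k:n}} = \tfrac{1}{\alpha}(E_{k-i+1,k}-E_{0,k}) + R_{n,k,i},
\]
where $E_{j,k}$ are exponential order statistic differences built out of $-\log U_{n-k+i:n}$, and $R_{n,k,i}$ is an error term coming from the slowly varying factor $L$. The Renyi representation handles the exponential part uniformly in $k$ by standard LLN. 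The error term is controlled by Potter's bounds, which give for any $\varepsilon>0$ and all $k$ in $[a_n,b_n]$,
\[
\sup_{1\le i\le k} |R_{n,k,i}| \le \varepsilon\,\bigl(1+ \tfrac{1}{\alpha}\bigl|\log(U_{n-k+i:n}/U_{n-k:n})\bigr|\bigr)
\]
on an event whose probability tends to one (since $b_n/n\to 0$ forces $U_{n-b_n:n}\to 0$ in probability, so all relevant quantiles lie in Potter's asymptotic regime). Averaging over $i$ and taking the supremum over $k\in[a_n,b_n]$ then yields the desired uniform control.

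\textbf{Step 2 (Sandwiching the random sequence).} Since $k_n\probconv\infty$, a standard subsequence construction yields a deterministic sequence $(a_n)$ with $a_n\to\infty$ and $\prob(k_n\ge a_n)\to 1$: for each integer $m$ choose $N_m$ with $\prob(k_n\ge m)\ge 1-1/m$ for all $n\ge N_m$, take $N_m$ strictly increasing, and set $a_n=m$ on $[N_m,N_{m+1})$. Analogously, from $k_n/n\probconv 0$ one constructs a deterministic $(b_n)$ with $b_n/n\to 0$ and $\prob(k_n\le b_n)\to 1$. Replacing $a_n$ by $a_n\wedge b_n$ if necessary, we arrange $a_n\le b_n$, so $\prob(a_n\le k_n\le b_n)\to 1$.

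\textbf{Step 3 (Combining).} On the event $A_n:=\{a_n\le k_n\le b_n\}$,
\[
\bigl|\widehat{\alpha}_{n,k_n}-\alpha\bigr| \le \sup_{a_n\le k\le b_n}\bigl|\widehat{\alpha}_{n,k}-\alpha\bigr|,
\]
and the right-hand side tends to $0$ in probability by Step 1, while $\prob(A_n)\to 1$ by Step 2. This gives $\widehat{\alpha}_{n,k_n}\probconv\alpha$.

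\textbf{Main obstacle.} The substantive work is Step 1: upgrading Mason's pointwise LLN to uniform convergence of the Hill estimator over all intermediate ranges $[a_n,b_n]$. The exponential part is tractable via a functional LLN for empirical processes of i.i.d.\ exponentials, but the slowly-varying error term requires a careful uniform application of Potter's bounds together with a probabilistic bound on how small $U_{n-b_n:n}$ can be, to ensure we remain in the asymptotic regime simultaneously for all $k$ in the range. Steps 2 and 3 are then essentially bookkeeping.
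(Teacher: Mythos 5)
Your proof plan is correct and follows essentially the same strategy as the paper's: both upgrade Mason's pointwise consistency to a uniform-in-$k$ statement via the quantile transform, R\'enyi's representation for the exponential main term, and Karamata/Potter control of the slowly-varying remainder, and then exploit that the random $k_n$ lies in the relevant intermediate range with high probability. The only cosmetic difference is that the paper works with a fixed large $K$ and the range $[K, n/K]$ together with a double $\limsup_K\limsup_n$ over the events $\{\sup_{K\le k\le n/K}|\cdot|\ge \vep\}$, which sidesteps your explicit construction of the deterministic sandwich sequences $(a_n)$ and $(b_n)$ in Step 2.
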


This result can be seen as an extension of \cite[Proposition 1]{mason:1982}, where consistency of Hill's estimator for
deterministic intermediate sequences is established. Our proof is a refinement of the method of proof in \cite{mason:1982}.
We also refer to \cite{goldie1987slow} for a convergence proof for random intermediate sequences, under stronger assumptions on $F$, and with stronger results.


\subsection{Properties of the \PLFIT{} optimizer $\kappa_n^\star$}
\label{sec-prop-kappan*}
The  most crucial step in the analysis of the \PLFIT{} method is the study of the asymptotic behavior of $ D_{n, \kappa_n^\star} = \min_{1 \le k \le n} D_{n,k}$. Here we start by showing that $D_{n, \kappa_n^\star} \probconv 0$ under the assumption \eqref{eq:defn:regvar:tail:dist}, which has so far not yet been studied in the literature:

\begin{thm}[Minimum KS distance vanishes]
\label{thm:probconv:DKN}
If $\overline{F} \in {\rm RV}_{-\alpha}$, then $D_{n, \kappa_n^\star} \probconv 0$ as $n \to \infty$.
\end{thm}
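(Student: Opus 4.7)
The plan exploits that $D_{n,\kappa_n^\star} = \min_{1 \le k \le n-1} D_{n,k}$ is by definition the \emph{minimum} over $k$, so it suffices to produce a single deterministic intermediate sequence $(k_n)_{n\ge 1}$ along which $D_{n,k_n} \probconv 0$; the conclusion then follows from $D_{n,\kappa_n^\star} \le D_{n,k_n}$. I would fix any convenient choice, e.g. $k_n = \lfloor n^{1/2}\rfloor$, write $\widehat{F}_{n,k}(y) := \frac{1}{k}\sum_{i=1}^{k} \mbbo_{(y,\infty)}(X_{n-k+i:n}/X_{n-k:n})$ for the tail empirical survival function, and split by the triangle inequality:
\begin{align*}
D_{n,k_n} \;\le\; \sup_{y\ge 1}\bigl|\widehat{F}_{n,k_n}(y)-y^{-\alpha}\bigr| \;+\; \sup_{y\ge 1}\bigl|y^{-\alpha}-y^{-\widehat{\alpha}_{n,k_n}}\bigr| \;=:\; A_n+B_n.
\end{align*}

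For $B_n$: Mason's classical consistency theorem \cite{mason:1982} (or Theorem~\ref{thm:consistency-Hill-random} applied to the deterministic $k_n$) gives $\widehat{\alpha}_{n,k_n}\probconv\alpha$. Since for each $\beta>0$ the map $y\mapsto y^{-\beta}$ is continuous and non-increasing from $1$ to $0$ on $[1,\infty)$, a standard Polya-type argument (pointwise convergence of monotone functions to a continuous monotone limit is uniform on $[1,\infty)$) upgrades $\widehat{\alpha}_{n,k_n}\probconv\alpha$ to $B_n\probconv 0$.

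For $A_n$: Set $u_n := F^{\leftarrow}(1-k_n/n)$, which satisfies $u_n\to\infty$ and $(n/k_n)\overline{F}(u_n)\to 1$ by continuity of $F$, and introduce the deterministic-threshold counterpart
\begin{align*}
G_n(y) \;:=\; \frac{1}{k_n}\sum_{i=1}^n \mbbo\{X_i > yu_n\}, \qquad y\ge 1-\tfrac12.
\end{align*}
By \eqref{eq:defn:regvar:tail:dist}, $\exptn[G_n(y)]=(n/k_n)\overline{F}(yu_n)\to y^{-\alpha}$, while $\var[G_n(y)]\le\exptn[G_n(y)]/k_n\to 0$, so $G_n(y)\probconv y^{-\alpha}$ for each fixed $y$. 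Since $G_n$ is non-increasing and the limit is continuous, pointwise convergence on a dense countable subset of $[1,\infty)$ upgrades, via a Helly-Polya sandwich on a fine grid, to uniform convergence in probability on every compact subinterval. To cover the tail $y\to\infty$, Potter's bounds give $(n/k_n)\overline{F}(yu_n)\le 2y^{-(\alpha-\delta)}$ uniformly in $y\ge 1$ for $n$ large, so $\exptn[G_n(M)]$ can be made arbitrarily small by choosing $M$ large; together with $y^{-\alpha}\le M^{-\alpha}$ for $y\ge M$ and the monotonicity of $G_n$, this controls the tail and yields $\sup_{y\ge 1}|G_n(y)-y^{-\alpha}|\probconv 0$. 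Finally, by standard quantile convergence, $X_{n-k_n:n}/u_n\probconv 1$; hence for each $\vep>0$, on an event of probability tending to $1$, the monotonicity of $G_n$ gives
\begin{align*}
G_n(y(1+\vep)) \;\le\; \widehat{F}_{n,k_n}(y) \;\le\; G_n(y(1-\vep)) \qquad \text{for all } y\ge 1,
\end{align*}
and the sandwich, combined with $(1\pm\vep)^{-\alpha}\to 1$ as $\vep\to 0$, yields $A_n\probconv 0$.

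The main technical obstacle is the uniform-in-$y$ convergence $\sup_{y\ge 1}|G_n(y)-y^{-\alpha}|\probconv 0$ on the \emph{whole} half-line. On compacts the combination of monotonicity with a continuous limit makes this essentially automatic, but extending up to $y=\infty$ requires Potter-bound control of the slowly varying factor $L$ in $\overline{F}(x)=x^{-\alpha}L(x)$. The additional issue that $\widehat{F}_{n,k_n}$ uses the random threshold $X_{n-k_n:n}$ rather than the deterministic $u_n$ is handled cleanly by the monotonicity sandwich above, so once uniform convergence of $G_n$ is established the remaining steps are routine.
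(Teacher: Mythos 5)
Your proof is correct and reaches the same conclusion, but it does so by a genuinely different and in fact simpler route than the paper. You both exploit the observation that $D_{n,\kappa_n^\star}\le D_{n,k_n}$ for any deterministic intermediate sequence $(k_n)$, so it suffices to kill $D_{n,k_n}$. The paper, however, passes through Proposition~\ref{propn:upperbound:distance}, which bounds $D_{n,k}$ by \emph{three} terms obtained from a $\max(a,b)\le a+b$ split at $y=X_{n:n}/X_{n-k:n}$ followed by the triangle inequality; it then has to dispose of the extra boundary term $(X_{n:n}/X_{n-k:n})^{-\widehat\alpha_{n,k}}$ via Lemma~\ref{lemma:lower:bound:DKN}, and it proves the tail-empirical part (Lemma~\ref{lemma:first:term:probconv:zero:lower:bound}) by citing vague convergence of the tail empirical measure from Resnick plus a Glivenko--Cantelli argument. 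Your version instead applies the triangle inequality directly to get a clean two-term bound $D_{n,k_n}\le A_n+B_n$, bypassing the boundary term entirely, and proves $A_n\probconv 0$ from scratch via a second-moment bound, a Polya monotonicity argument on compacts, Potter's bounds for the tail, and a monotone sandwich between $G_n(y(1\pm\vep))$ to replace the random threshold $X_{n-k_n:n}$ by the deterministic $u_n$. What you lose, looking ahead in the paper, is the lower bound $D_{n,k}\ge\mathrm{e}^{-K}$ for $k\le K$: that inequality is read off from the exact $\max$ identity in the proof of Proposition~\ref{propn:upperbound:distance} and Lemma~\ref{lemma:lower:bound:DKN}(1), and is then the engine behind Theorem~\ref{thm-growth-kappan} ($\kappa_n^\star\probconv\infty$), which your triangle-inequality bound discards. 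So the three-term decomposition is not wasted in the paper even though it is overkill for Theorem~\ref{thm:probconv:DKN} alone. One minor presentational caveat: your invocation of ``Polya-type'' uniform convergence should be stated on compacts and then patched at $y=\infty$ using the decay of both $y^{-\alpha}$ and $y^{-\widehat\alpha_{n,k_n}}$ (with $\widehat\alpha_{n,k_n}\ge\alpha/2$ w.h.p.); you do this correctly for $A_n$ but elide it for $B_n$, which is harmless here but worth spelling out.
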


In Lemma~\ref{lemma:lower:bound:DKN} below, we prove that if $\kappa_n^\star$ does not grow with the sample size, then $D_{n, \kappa_n^\star}$ remains positive for all $n$ with high probability. Combining this fact with Theorem~\ref{thm:probconv:DKN}, we obtain the following theorem:

\begin{thm}[Optimizer of minimum KS distance tends to infinity]
\label{thm-growth-kappan}
If $\overline{F} \in {\rm RV}_{-\alpha}$, then $\kappa_n^\star \probconv \infty$ as $n \to \infty$.
\end{thm}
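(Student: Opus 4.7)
The plan is to argue by contradiction, combining the two results already stated: Theorem~\ref{thm:probconv:DKN} forces the minimum Kolmogorov--Smirnov distance $D_{n,\kappa_n^\star}$ to vanish in probability, while Lemma~\ref{lemma:lower:bound:DKN} asserts that, whenever the optimizer $\kappa_n^\star$ stays bounded, this distance cannot decay to zero. These two statements are in direct tension on the event $\{\kappa_n^\star \text{ is bounded}\}$, and exploiting that tension is the whole content of the theorem.

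To implement this, I would first assume that $\kappa_n^\star \not\probconv \infty$. Then there exist $\varepsilon > 0$, an integer $M \geq 1$, and a subsequence $(n_j)_{j \geq 1}$ along which
\begin{equation*}
\mathbb{P}(\kappa_{n_j}^\star \leq M) \;\geq\; \varepsilon \quad \text{for every } j \geq 1.
\end{equation*}
Next, I would invoke Lemma~\ref{lemma:lower:bound:DKN} to produce a positive constant $c(M)$ such that, on the event $\{\kappa_n^\star \leq M\}$, one has $D_{n,\kappa_n^\star} \geq c(M)$ with probability tending to $1$. Heuristically the bound is immediate: for any realization with $\kappa_n^\star = k \leq M$, the inner empirical function $y \mapsto \tfrac{1}{k} \sum_{i=1}^{k} \mbbo_{(y,\infty)}(X_{n-k+i:n}/X_{n-k:n})$ appearing in \eqref{eq:defn:DNK} is a step function with jumps of size $1/k$, while $y \mapsto y^{-\widehat{\alpha}_{n,k}}$ is continuous, so the supremum in \eqref{eq:defn:DNK} is at least $1/(2k) \geq 1/(2M)$, deterministically. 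Combining,
\begin{equation*}
\mathbb{P}\bigl( D_{n_j, \kappa_{n_j}^\star} \geq 1/(2M) \bigr) \;\geq\; \varepsilon - o(1),
\end{equation*}
which contradicts $D_{n,\kappa_n^\star} \probconv 0$ from Theorem~\ref{thm:probconv:DKN}. Hence $\kappa_n^\star \probconv \infty$, as claimed.

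The genuine difficulty of the programme lies upstream of this deduction, in Theorem~\ref{thm:probconv:DKN} and Lemma~\ref{lemma:lower:bound:DKN} themselves. The lower bound lemma should follow from the jump-height heuristic sketched above, but it must be formulated uniformly in $k \in \{1,\dots,M\}$ and must cope with the data-dependence of $\widehat{\alpha}_{n,k}$; evaluating the supremum in \eqref{eq:defn:DNK} at a carefully chosen jump point of the empirical survival function (for example $y = X_{n-k+1:n}/X_{n-k:n}$) sidesteps both issues, since the resulting lower bound of order $1/k$ is independent of the fitted exponent. With Lemma~\ref{lemma:lower:bound:DKN} in hand, the deduction above is essentially a one-line pigeonhole argument.
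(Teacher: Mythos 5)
Your proposal is correct and follows essentially the same route as the paper: combine Theorem~\ref{thm:probconv:DKN} ($D_{n,\kappa_n^\star}\probconv 0$) with the deterministic lower bound on $D_{n,k}$ for bounded $k$ from Lemma~\ref{lemma:lower:bound:DKN}, with the only cosmetic difference being that you frame the deduction as a contradiction along a subsequence while the paper writes it directly as $\prob(\kappa_n^\star\le M)\le\prob(D_{n,\kappa_n^\star}\ge \e^{-M})\to 0$. One small remark worth keeping: your jump-height observation (the empirical survival function in \eqref{eq:defn:DNK} has jumps of size $1/k$, forcing $D_{n,k}\ge 1/(2k)$) is a cleaner and sharper way to get the lower bound than the paper's own proof of Lemma~\ref{lemma:lower:bound:DKN}, which instead extracts the bound $\e^{-K}$ from the residual tail term $(X_{n:n}/X_{n-k:n})^{-\widehat\alpha_{n,k}}$; both bounds are deterministic and either suffices here.
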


The next task is to establish that $\kappa_n^\star/n \probconv 0$. For this, we introduce the following definition:

\begin{defn}[Eventually Pareto distributions]
\label{def-eventually-Pareto}
We call a distribution function {\em eventually Pareto} when there exists an $x_0$ and $c>0$ such that $\bar{F}(x)=c x^{-\alpha}$ for all $x\geq x_0$.
Equivalently, this happens when $L$ in \eqref{L-def-RV} is constant above $x_0$. Below, we will assume that $x_0$ is the {\em smallest} value above which $L$ is constant.
\end{defn}

Our main result concerning the \PLFIT{} method is the following theorem. In its statement, we use the notation
	\begin{align}
	{\cal U}(x) = \frac{\alpha \int_0^{\overline{F}(x)} \log F^{\leftarrow}(1- s) \dtv s - (\alpha \log x + 1) \overline{F}(x) }{\int_0^{\overline{F}(x)} \log \frac{F^{\leftarrow}(1- s)}{x} \dtv s }. \label{eq:defn:calu}
	\end{align}

\begin{thm}[\PLFIT{} optimizer $\kappa_n^\star$ is sublinear when not eventually Pareto]
\label{thm-sublinear}
Suppose that $F$ satisfies that $\overline{F} \in {\rm RV}_{-\alpha}$, as well as that $\overline{F}$  and $F^{\leftarrow}$ are continuous. Then,
	\eqn{
	\label{Dn-conv}
	D_{n, [\bar{F}(x)n]} \probconv \sup_{y\geq 1} y^{-\alpha}\Big|y^{{\cal U}(x)}-\frac{L(xy)}{L(x)}\Big|.
	}
Furthermore, for every $x\geq 1$,
	\eqn{
	\label{positive-Dn}
	\sup_{y\geq 1} y^{-\alpha}\Big|y^{{\cal U}(x)}-\frac{L(xy)}{L(x)}\Big|>0,
	}
unless $F$ is eventually Pareto. Consequently, $\kappa_n^\star/n \probconv 0$ as $n \to \infty$ unless $F$ is eventually Pareto.
\end{thm}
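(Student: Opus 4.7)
The plan is to prove the three assertions of the theorem in sequence, and then deduce the sublinearity of $\kappa_n^\star$ by a subsequence/contradiction argument combining the first two with Theorem~\ref{thm:probconv:DKN}.

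For the limit \eqref{Dn-conv} with $k_n=\lfloor\overline{F}(x)n\rfloor$, continuity of $F$ and $\finv$ gives $X_{n-k_n:n}\probconv x$ by quantile convergence. Writing
\[
D_{n,k_n}=\sup_{y\geq 1}\bigl|\widehat G_{n,k_n}(y)-y^{-\widehat\alpha_{n,k_n}}\bigr|,
\]
where $\widehat G_{n,k_n}$ is the empirical survival function of the ratios $X_{n-k_n+i:n}/X_{n-k_n:n}$, I would establish uniform convergence of the two terms separately. Conditionally on $X_{n-k_n:n}=t$, the top $k_n$ order statistics are the order statistics of $k_n$ i.i.d.\ draws from $F$ conditioned to exceed $t$, so Glivenko--Cantelli combined with $t\to x$ and continuity of the limit yields $\widehat G_{n,k_n}(y)\probconv \overline{F}(xy)/\overline{F}(x)=y^{-\alpha}L(xy)/L(x)$ uniformly in $y\in[1,\infty)$. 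A parallel LLN for the log-ratios gives $H_{n,k_n}\probconv \overline{F}(x)^{-1}\int_0^{\overline{F}(x)}\log(\finv(1-s)/x)\,ds$; a short algebraic manipulation of \eqref{eq:defn:calu} identifies $\widehat\alpha_{n,k_n}\probconv \alpha-{\cal U}(x)$, so $y^{-\widehat\alpha_{n,k_n}}\to y^{{\cal U}(x)-\alpha}$ uniformly on $[1,\infty)$. Combining the two uniform convergences inside the supremum yields \eqref{Dn-conv}.

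The positivity \eqref{positive-Dn} is short: if the supremum vanishes, then $L(xy)/L(x)=y^{{\cal U}(x)}$ for every $y\geq 1$, so setting $u=xy$ gives $L(u)=L(x)(u/x)^{{\cal U}(x)}$ on $[x,\infty)$. Since $L\in{\rm RV}_0$, the only permissible exponent is zero, forcing ${\cal U}(x)=0$ and $L$ constant on $[x,\infty)$. This places $F$ in the eventually Pareto class, contrary to hypothesis.

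For the sublinearity, suppose $\kappa_n^\star/n\not\probconv 0$. Then there exist $c,\delta>0$ and a subsequence $(n_j)$ with $\prob(\kappa_{n_j}^\star/n_j\geq c)\geq\delta$. By tightness in $[0,1]$, extract a further subsequence along which $\kappa_{n_j}^\star/n_j\Rightarrow K$ with $\prob(K\geq c)\geq\delta$, and pass to a Skorokhod representation giving almost-sure convergence jointly with the empirical objects. On $\{K\geq c\}$, the threshold $X_{n_j-\kappa_{n_j}^\star:n_j}$ converges to $\finv(1-K)$, and a conditional-on-$K$ repetition of the argument for \eqref{Dn-conv} yields $D_{n_j,\kappa_{n_j}^\star}\to g(\finv(1-K))$ almost surely on $\{K\geq c\}$, where $g(x)$ denotes the right-hand side of \eqref{Dn-conv}. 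By \eqref{positive-Dn}, $g(\finv(1-K))>0$ on this event, contradicting $D_{n,\kappa_n^\star}\probconv 0$ from Theorem~\ref{thm:probconv:DKN}. The principal obstacle is exactly this last transfer: to move from the pointwise-in-$t$ convergence $D_{n,\lfloor tn\rfloor}\probconv g(\finv(1-t))$ at deterministic $t$ to the random index $\kappa_n^\star$, one needs either a uniform-in-$t$ strengthening of the argument for \eqref{Dn-conv}, or a process-level tightness plus continuous mapping for $(D_{n,\lfloor tn\rfloor})_{t\in[c,1]}$, together with continuity of $t\mapsto g(\finv(1-t))$ (which follows from the integral form of ${\cal U}$ and continuity of $\overline{F}$ and $L$).
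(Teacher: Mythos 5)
Your sketches of \eqref{Dn-conv} and \eqref{positive-Dn} are essentially sound and run along the same lines as the paper: for \eqref{Dn-conv}, the paper writes $D_{n,[nt]}=\sup_{y\ge1}|Z_n(t,y)|$ with $Z_n$ as in \eqref{eq:defn:Z_n} and proves pointwise-in-$t$ convergence in Proposition~\ref{propn:dnk:functional:limit}, splitting into a Glivenko--Cantelli piece for the empirical survival function and an a.s.\ limit for the Hill estimator (Lemma~\ref{lemma:aslimit:hill}); and your derivation of \eqref{positive-Dn} mirrors the discussion the paper gives immediately below the theorem. The positivity argument is correct.

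The genuine gap is in the last step, and you have in fact put your finger on it yourself but then resolved it incorrectly. You claim that $t\mapsto g(\finv(1-t))$ (equivalently $t\mapsto\overline{Z}(t)=\sup_{y\ge1}|Z(t,y)|$) is continuous, ``which follows from the integral form of ${\cal U}$ and continuity of $\overline{F}$ and $L$.'' This is false in general: $\overline{Z}(t)$ is a supremum over $y$ of a family of functions jointly continuous in $(t,y)$ over a noncompact range, and such a supremum is only lower semicontinuous in $t$, not continuous. The paper flags exactly this point: before Lemma~\ref{lemma:sublinear:growth:noteventually:pareto} it notes that $\overline{Z}(t)$ ``may not be a continuous function of $t$, but is a lower semicontinuous function, instead,'' and the proof then invokes lower semicontinuity together with compactness (via \cite[Proposition~1.26(a)]{rockafellar:wets:1998} and the Bolzano--Weierstrass theorem) rather than continuity. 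A continuous-mapping step based on continuity of $\overline Z$ would therefore not go through as stated.

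Compounding this, the tightness needed to pass from pointwise-in-$t$ convergence to convergence of $\inf_{t\in[\vep,1]}\overline Z_n(t)$ --- which you identify as ``the principal obstacle'' --- is not established in your proposal; it is only named. This is precisely the content of Proposition~\ref{prop:tightness:suprema}, which the paper itself calls ``one of the most delicate problems in the proof.'' Your Skorokhod-coupling route would require joint almost-sure convergence of the whole process $(\overline Z_{n_j}(t))_{t\in[c,1]}$, which again needs the process-level tightness, not just convergence of the marginal $\kappa^\star_{n_j}/n_j$. The paper instead avoids the subsequence/Skorokhod machinery entirely: it bounds
\begin{equation*}
\prob(\kappa_n^\star/n>\vep)\le\prob\bigl({\cal D}_{[n\vep]:n}\le\eta\bigr)+\prob\bigl(D_{n,\kappa_n^\star}>\eta\bigr),
\end{equation*}
uses Theorem~\ref{thm:probconv:DKN} for the second term, and proves ${\cal D}_{[n\vep]:n}\probconv\inf_{t\in[\vep,1]}\overline Z(t)>0$ via Propositions~\ref{propn:dnk:functional:limit}--\ref{prop:tightness:suprema} and Lemmas~\ref{lem-inf-t[vep,1]}--\ref{lemma:sublinear:growth:noteventually:pareto}. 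To complete your approach you would need to supply the tightness argument and replace continuity of $\overline Z$ by lower semicontinuity plus a compactness argument.
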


We note that ${\cal U}(x)\equiv 0$ for all $x\geq x_0$ in the eventually Pareto case. It turns out that this is the {\em only} way how the right-hand side of \eqref{Dn-conv} can equal zero. Indeed, we claim that when the right-hand side of \eqref{Dn-conv} equals zero, then $F$ is eventually Pareto for some $x_0\leq x$. To see this, we first note that when ${\cal U}(x)\neq 0$, then the right-hand side of \eqref{Dn-conv} is strictly positive, since $y\mapsto L(xy)/L(x)$ is slowly varying, while $y\mapsto y^{{\cal U}(x)}$ is regularly varying with exponent ${\cal U}(x)\neq 0$. Thus, the right-hand side of \eqref{Dn-conv} is strictly positive for all $y$ sufficiently large by Potter's Theorem. When, instead, ${\cal U}(x)=0$, the right-hand side of \eqref{Dn-conv} simplifies to
	\eqn{
	\label{positive-Dn-U=0}
	\sup_{y\geq 1} y^{-\alpha}\Big|1-\frac{L(xy)}{L(x)}\Big|,
	}
which is strictly positive {\em unless} $L(xy)=L(x)$ for all $y\geq 1$. This is equivalent to the statement that $y\mapsto L(y)$ is constant for all $y\geq x$, which (recall Definition \ref{def-eventually-Pareto}) is equivalent to the statement that $F$ is eventually Pareto for some $x_0\leq x$. Despite this simple argument, it is non-trivial to conclude that $\kappa_n^\star/n \probconv 0$ as $n \to \infty$ unless $F$ is eventually Pareto, due to the fact that for \PLFIT{}, we take an {\em infimum} of $D_{n, k}$, and to be able to interchange the limits with the infimum, we are required to prove a tightness result, see Proposition \ref{prop:tightness:suprema} below.

As mentioned in the introduction, the derivation of asymptotic properties of the minimum-distance estimator based on the KS distance is recognized to be a mathematically challenging problem. Apart from the main ideas and structure of proof sketched in this section, working out the details is technically challenging. The aforementioned tightness result (Proposition \ref{prop:tightness:suprema}) is one of the most delicate problems in the proof. Another challenging key property that required considerable effort is the strict positivity of the right hand side of \eqref{Dn-conv}, particularly when an infimum over all $x>F^{\leftarrow}(1-\vep)$ is taken; see Lemma \ref{lemma:sublinear:growth:noteventually:pareto} below.
\smallskip

Unfortunately, our methods do not apply to integer-valued data, but there are reasonable ways to resolve such issues:
\begin{remark}[Integer-valued random variables]
\label{rem-int-rvs}
In many applications, the power-law data is integer-valued. A key example consists of degrees of vertices in real-world networks, an example that has drawn enormous attention (see e.g., \cite{broido2019scale, clauset:shalizi:newman:2009, holme2019rare, voitalov:vanderhoorn:vanderhofstad:krioukov:2018}). Our theory does not apply to this setting, as already observed in \cite{voitalov:vanderhoorn:vanderhofstad:krioukov:2018}. There, it was proposed to replace the data $(X_i)_{i=1}^n$ by $(X_i+U_i)_{i=1}^n$, where $(U_i)_{i=1}^n$ are i.i.d.\ uniform random variables on [0,1].
When $\prob(X=k)>0$ for all $k\geq k_0$ and the distribution function of $X$ is in ${\rm RV}_{-\alpha}$, one can see that the distribution function $F$ of $X+U$ satisfies that $\overline{F} \in {\rm RV}_{-\alpha}$, as well as that $\overline{F}$  and $F^{\leftarrow}$ are continuous.
\end{remark}


\subsection{Consistency of {\rm \PLFIT{}} for non-eventually Pareto distributions: proof of Theorem \ref{thm:consistency:PLFIT}}
\label{sec-proof-consistency-PLFIT}
We now prove Theorem \ref{thm:consistency:PLFIT} when the distribution function $F$ is not eventually Pareto. We recall the subsequence principle (see \cite[Theorem~2.3.2]{durrett:2010}), which connects convergence in probability and almost sure convergence. It says that a sequence of random variables $(X_n)_{n \ge 1}$ converges in probability to $X$ if and only if for every subsequence $(X_{n_m})_{m \ge 1}$, there is a further subsequence $(X_{n_{m_{k}}})_{k\geq 1}$ converging to $X$ almost surely as $k \to \infty$.
To conclude consistency of the \PLFIT{} estimator $\widehat{\alpha}_{n,\kappa_n^\star}$, for any subsequence $(n_{m})_{m \ge 1}$, we thus need to produce a further subsequence $(n_{m_{k}})_{k \ge 1}$ such that $\widehat{\alpha}_{n_{m_{k}}, \kappa^\star_{n_{m_{k}}}}$ converges almost surely to $\alpha$ as $k \uparrow \infty$.  We do this now.

Fix a subsequence $(n_{m})_{m \ge 1}$. As $\kappa_n^\star \probconv \infty$ is proved in Theorem~\ref{thm-sublinear}, and $\kappa_n^\star/n \probconv 0$ is proved in Theorem~\ref{thm-sublinear} under the stated conditions, we can produce a further subsequence $(n_{m_{k}})_{k\geq 1}$ of $(n_{m})_{m\geq 1}$ such that $\kappa_{n_{m_{k}}}^\star\asconv \infty$ and $\kappa_{n_{m_{k}}}^\star/n_{m_{k}}\asconv 0$. As $\kappa_{n_{m_{k}}}^\star$ is an intermediate sequence with probability one, we have that $\widehat{\alpha}_{n_{m_{k}}, \kappa_{n_{m_{k}}}^\star} \probconv \alpha$ as a consequence of consistency of Hill's estimator as in Theorem \ref{thm:consistency-Hill-random}.

Using the subsequence principle once again, we obtain a further subsequence $\tilde{n}_k$ such that $\widehat{\alpha}_{\tilde{n}_k, \kappa^\star_{\tilde{n}_k}}\asconv\alpha$. If we choose $(n_m)_{m\geq 1}$ to be this subsequence, then we obtain consistency of the \PLFIT{} estimator under the assumption stated in Theorem~\ref{thm-sublinear}. These facts prove Theorem  \ref{thm:consistency:PLFIT}, except in the pure or eventual Pareto cases. The latter proofs will be deferred to Section \ref{subsec:consistency:hill}.
\qed

\subsection{Concluding comments}
\label{sec-disc-op}
Though we settle the open question of consistency of the \PLFIT{} estimator in this paper, many open questions remain.
For example, rather than the Kolmogorov-Smirnov distance used in \eqref{eq:defn:DNK}, there are many other functionals of the data that could be minimised.
It would be of interest to investigate the consistency of such methods more generally. The closest to our setting might be the quantile minimization, where  \eqref{eq:defn:DNK} is replaced by
	\begin{align}
	\label{eq:defn:QNK}
	Q_{n, k} := \sup_{u\in[0,1]} \Big|F_k^{\leftarrow}(u)- u^{-1/\widehat{\alpha}_{n, k}}  \Big|,
	\end{align}
where $u\mapsto F_k^{\leftarrow}(u)$ is the empirical distribution of the vector $(X_{n-k + i: n}/X_{n- k : n})_{i=1}^k.$
This method has been investigated with extensive simulations in \cite[display (9) at Page~10]{casperdevries:2019}.
We expect that the method developed in our paper form a promising starting point in establishing whether this procedure is consistent.
Other functionals to be minimized could be the Cram\'er-von Mises functional
$k\int_{0}^{\infty} (F_k(y)-y^{-\widehat{\alpha}_{n, k}})^2dy$, the Anderson-Darling functional whose square equals
$k\int_{0}^{\infty} (F_k(y)-y^{-\widehat{\alpha}_{n, k}})^2 y^{\widehat{\alpha}_{n, k}}/(1-y^{-\widehat{\alpha}_{n, k}})dy$, or the Hellinger distance. Here, the Anderson-Darling criterion puts more mass on the tails of the distribution, so may be more appropriate for power-law distributions.

Apart from consistency, questions about bias and confidence intervals for \PLFIT{} and other similar procedures remain open, we refer to \cite[Section 6.4]{embrechts2013modelling}  and \cite[Section 4.5]{beirlant:goegebeur:segers:teugels:2006} for an overview of the state of the art.
The analysis in \cite{drees:janssen:resnick:wang:2018} suggests that the construction of confidence intervals may be a nontrivial task, as asymptotic normality of the \PLFIT{} estimator does not seem to hold even in the Pareto case. A more comprehensive analysis of \PLFIT{} in this direction would involve a second-order assumption on the slowly varying function $L$, as in \cite{goldie1987slow}. Such an assumption seems required to assess the rate of growth of $\kappa_n^\star$, which would be an essential first step.

\section{Consistency of Hill's estimator and Pareto cases}
\label{sec-consistency-hill-Pareto}
In this section, we investigate the consistency of the Hill estimator for random intermediate sequences, and prove its consistency for the eventual Pareto example.

\subsection{Consistency of Hill's estimator for random intermediate sequences: Proof of Theorem \ref{thm:consistency-Hill-random}}
\label{subsec:consistency:hill}
In this section, we prove Theorem \ref{thm:consistency-Hill-random} by showing that Hilll's estimator is consistent for random intermediate sequences:

\begin{proof}[Proof of Theorem \ref{thm:consistency-Hill-random}]
We start by briefly describing the proof of consistency of $H_{n,k_n}$ given in \cite[Proposition~1]{mason:1982} when $(k_n)_{n\geq 1}$ is an intermediate sequence. Terms appearing in this proof will be used later in other proofs in this article.

Define $U(x) = F^{\leftarrow}(1 - x^{-1}) = (1/\overline{F})^{\leftarrow}(x)\colon [1, \infty) \to [1, \infty)$. It follows from \cite[Proposition~{0.8(v)}]{resnick:1986} that $U \in {\rm RV}_{1/\alpha}$, i.e.\, $U(x) = x^{1/\alpha} \widetilde{L}(x)$ where $\widetilde{L}$ is a slowly varying function. Let $(\unif_{i : n})_{i=1}^n$ be the order statistics in increasing order of their magnitude associated to a random sample $(\unif_{i})_{i=1}^n$ from the ${\rm Uniform}(0,1)$ distribution.  Using Karamata's representation theorem (\cite[Corollary~{2.1}]{resnick:2007}), we obtain
	\begin{align}
	\label{tildeL-def}
	\widetilde{L}(x) = a_1(x) \exp \Big\{ \int_1^x u^{-1} b_1(u) \dtv u\Big\}.
	\end{align}
Here $a_1\colon (0, \infty) \to (0,\infty)$ and $b_1 \colon (0, \infty) \to (0, \infty)$ are bounded and measurable functions satisfying $\lim_{x \to \infty} a_1(x) = a_1$ and $\lim_{x \to \infty} b_1(x) = 0$ (see \cite[discussion after equation (1.3.1') in page~12]{bingham:goldie:teugels:1987}). It can be shown that
	\begin{align}
	H_{n, k_n} & =  \frac{1}{k_n} \sum_{i=1}^{k_n} \Big( \log U(\unif^{-1}_{k_n+ i + 1: n}) - \log U(\unif^{-1}_{k_n + 1: n}) \Big) \nonumber \\
	& = \frac{1}{\alpha} \frac{1}{k_n} \sum_{i=1}^{k_n} \log \frac{\unif_{k_n - i + 1:n}^{-1}}{\unif^{-1}_{k_n + 1:n} } + \frac{1}{k_n} \sum_{i=1}^{k_n} \log \frac{a_1(\unif_{k_n- i + 1:n}^{-1})}{a_1(\unif_{k_n + 1:n}^{-1})} 	\nonumber \\
	& \hspace{1cm} + \frac{1}{k_n} \sum_{i=1}^{k_n} \int_{\unif^{-1}_{k_n + 1: n}}^{\unif_{k_n - i + 1:n}^{-1}} \dtv u~ u^{-1} b_1(u) \nonumber \\
	& =: \TERM{1}_n(k_n) + \TERM{2}_n(k_n) + \TERM{3}_n(k_n). \label{eq:decomposition:hill:estimator}
	\end{align}
Let $(E_i)_{1 \le i \le n}$ be a random sample of size $n$ from the {\rm Exponential} distribution with mean $1$ and the corresponding order statistics will be denoted by $E_{1:n} < E_{2:n} < \ldots < E_{n:n}$. Then we can use the distributional equality
	\begin{align}
	\Big( \log \unif_{1:n}^{-1}, \log \unif_{2:n}^{-1}, \ldots, \log \unif_{n:n}^{-1} \Big) \eqd \Big( E_{n: n}, E_{n-1:n}, \ldots, E_{1:n} \Big)
	\end{align}
with R\'enyi's representation theorem \cite[Theorem~{1.6.1}]{reiss:2012}  to conclude that
	\begin{align}
	\label{term-1-form}
	\TERM{1}_n(k_n) \eqd \frac{1}{\alpha} \Big(\frac{1}{k_n} \sum_{i=1}^{k_n} E_i\Big);
	\end{align}
see also \cite[(7)]{mason:1982}.

Therefore, $\TERM{1}_n \probconv 1/\alpha$ as $k_n \to \infty$. Further, $\TERM{2}_n \probconv 0$ if $k_n \uparrow \infty$ as $a_1$ is a bounded function that converges in zero, and $\TERM{2}_n \probconv 0$ as $b_1$ is a bounded function that converges to zero in zero. These two statements will be made precise below, and extended to {\em random} intermediate sequences. Combining these facts, we obtain that
	\begin{align}
	\label{eq:hill:estimator:bounded:in:probability}
	H_{n, k_n}  \probconv 1/\alpha \qquad \mbox{ if  }k_n \uparrow \infty.
	\end{align}
The above arguments are similar to the proof by Mason \cite{mason:1982}, see in particular \cite[Proof of Proposition 1]{mason:1982}. However, we are dealing with a {\em random} intermediate sequence, which requires some extra care. We will instead prove that, for every $\vep>0$,
	\eqn{
	\label{term-1-analysis}
	\limsup_{K\rightarrow \infty} \limsup_{n\rightarrow \infty} \prob(\sup_{k\geq K} |\TERM{1}_n(k)-1/\alpha|\geq \vep)=0,
	}
while, for $i=2,3$, and for every $\vep>0$,
	\eqn{
	\label{term-23-analysis}
	\limsup_{K\rightarrow \infty} \limsup_{n\rightarrow \infty} \prob(\sup_{K\leq k\leq n/K} |\TERM{i}_n(k)|\geq \vep)=0.
	}
Since, for a random intermediate sequence $(k_n)_{n\geq 1}$, the bounds $K\leq k_n\leq n/K$ hold with high probability for $n$ large, this implies that $H_{n, k_n}\probconv 1/\alpha$, as required.
Thus, we are left to prove \eqref{term-1-analysis} and \eqref{term-23-analysis}.

We start by proving \eqref{term-1-analysis}. For this, we use the union bound together with \eqref{term-1-form} to obtain
	\eqn{
	\prob(\sup_{k\geq K} |\TERM{1}_n(k)-1/\alpha|\geq \vep)\leq \sum_{k=K}^n \prob\Big(\Big|\sum_{i=1}^{k} E_i-k\Big| \geq \vep \alpha k\Big).
	}
A standard large deviations Chernoff bound shows that there exists a $\delta(\vep)>0$ such that	
	\eqn{
	\prob\Big(\Big|\sum_{i=1}^{k} E_i-k\Big| \geq \vep \alpha k\Big)\leq \e^{-\delta(\vep) k}.
	}
This gives that
	\eqn{
	\prob(\sup_{k\geq K} |\TERM{1}_n(k)-1/\alpha|\geq \vep)\leq \sum_{k=K}^n \e^{-\delta(\vep) k}=\e^{-\delta(\vep) K}/(1-\e^{-\delta(\vep)}).
	}
Thus, indeed, \eqref{term-1-analysis} holds.

We continue with \eqref{term-23-analysis} for $i=2$. We note that
	\eqn{
	|\TERM{2}_n(k)|\leq \log{\Big(\frac{\sup_{x\geq \unif_{k:n}^{-1}}a_1(x)}{\inf_{x\geq \unif_{k:n}^{-1}}a_1(x)}\Big)}.
	}
By the above argument and uniformly in $k\leq n/K$,
	\eqn{
	\unif_{k:n}^{-1}\geq \unif_{n/K:n}^{-1} \probconv K,
	}
since $\unif_{nt:n}\probconv t$. Since $\lim_{x \to \infty} a_1(x) = a_1$, for every $\eta>0$, we can find a $K>0$ such that, with probability converging to 1,
	\eqn{
	\log{\Big(\frac{\sup_{x\geq \unif_{k:n}^{-1}}a_1(x)}{\inf_{x\geq \unif_{k:n}^{-1}}a_1(x)}\Big)}\leq \log\Big(\frac{a_1 + \eta}{a_1 - \eta}\Big).
	}
Therefore, \eqref{term-23-analysis} holds for $i=2$.

For \eqref{term-23-analysis} for $i=3$, we note that, almost surely, for all $k\leq n/K$,
	\eqan{
	\TERM{3}_n(k) &\le \Big( \sup_{ u \ge U_{n/K:n}^{-1}} |b_1(u)|  \Big) \frac{1}{k} \sum_{i=1}^{k} \log \frac{\unif_{k - i + 1:n}^{-1}}{\unif^{-1}_{k + 1:n} }\\
	&\le \Big( \sup_{ u \ge U_{n/K:n}^{-1}} |b_1(u)|  \Big) \max_{k\geq K} \frac{1}{\alpha}\Big( \frac{1}{k}\sum_{i=1}^{k} E_i \Big)\nn\\
	&=\Big( \sup_{ u \ge U_{n/K:n}^{-1}} |b_1(u)|  \Big) \max_{k\geq K} |\TERM{1}_n(k)|.\nn
	}
Note that, for every $\varepsilon>0$,
	\eqn{
	\prob\Big( \sup_{u \ge U_{n/K:n}^{-1}}| b_1(u)| \leq  \sup_{u \ge K-1}| b_1(u)|+ \vep \Big) \rightarrow 1.
	}
The quantity $ \sup_{u \ge K-1}| b_1(u)|$
 becomes small when $K$ is large since $b_1(u)\rightarrow 0$ as $u\rightarrow \infty$. Further, $\max_{k\geq K} |\TERM{1}_n(k)|$ is a tight sequence of random variables by the analysis of $\TERM{1}_n(k)$. Therefore, \eqref{term-23-analysis} also holds for $i=3$.
\end{proof}

\subsection{Consistency of \PLFIT{} for eventually Pareto distributions}
\label{subsec:consistency:hill}
In this section, we prove the consistency of the \PLFIT{} method for the eventually Pareto case. For this, we investigate the properties of the (random and data-driven) $\kappa_n^\star$.

We start with the pure Pareto case. Interestingly, in this case, \cite[Corollary 2.2]{drees:janssen:resnick:wang:2018}, see also the discussion below it, suggests that $\kappa_n^\star$ is {\em not} a random intermediate sequence. However, by Lemma \ref{lemma:lower:bound:DKN} and Theorem \ref{thm:probconv:DKN}, it does follow that $\kappa_n^\star\probconv \infty$. For the Pareto distribution, we see that this suffices to prove consistency.

Indeed, consider the split in \eqref{eq:decomposition:hill:estimator}. For the Pareto distribution, $x\mapsto a_1(x)$ is constant, and $b_1(x)\equiv 0$. Thus, $\TERM{2}_n(k)\equiv \TERM{3}_n(k)\equiv 0$ for every $k$. It suffices to investigate $\TERM{1}_n(\kappa_n^\star)$. Since $\kappa_n^\star\probconv \infty$, \eqref{term-1-analysis} shows that $\TERM{1}_n(\kappa_n^\star)\probconv 1/\alpha$, as required. This proves the consistency of the \PLFIT{} method for the Pareto distribution, and thus proves Theorem \ref{thm:consistency:PLFIT} in this case.

We next extend the above proof to the eventually Pareto case. Recall $x_0$ in Definition \ref{def-eventually-Pareto}. In the following proposition, we show that $\kappa_n^\star/n\leq \bar{F}(x_0-\vep)$ with high probability, so that the part of the distribution that is not Pareto is asymptotically avoided:

\begin{propn}[W.h.p.\ $\kappa_n^\star$ avoids the non-Pareto regime]
\label{PROP-EVENT-PARETO}
Assume that $F$ is eventually Pareto above the value $x_0$. Then
	\eqn{
	\prob(\kappa_n^\star\geq n\bar{F}(x_0-\vep))\rightarrow 0.
	}
\end{propn}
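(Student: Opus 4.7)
\emph{Strategy.} I would prove the proposition by combining Theorem~\ref{thm:probconv:DKN}, which asserts $D_{n,\kappa_n^\star}\probconv 0$, with a uniform positive lower bound on $D_{n,k}$ as $k$ ranges over the ``non-Pareto regime'' $\lceil n\overline{F}(x_0-\vep)\rceil\leq k\leq n-1$. Since $\kappa_n^\star$ minimises $k\mapsto D_{n,k}$, these two facts together force the event $\{\kappa_n^\star\geq n\overline{F}(x_0-\vep)\}$ to have vanishing probability.

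\emph{Step 1: pointwise positivity of the limit.} I would reparametrise such $k$ as $k=[n\overline{F}(x)]$ with $x\in[1,x_0-\vep]$. The limit identity \eqref{Dn-conv} of Theorem~\ref{thm-sublinear} yields
\[
D_{n,[n\overline{F}(x)]}\probconv \Delta(x):= \sup_{y\geq 1} y^{-\alpha}\Big|y^{{\cal U}(x)}-\frac{L(xy)}{L(x)}\Big|.
\]
The discussion immediately following Theorem~\ref{thm-sublinear}, combined with the minimality of $x_0$ in Definition~\ref{def-eventually-Pareto} (which forbids $L$ from being constant on $[x,\infty)$ for any $x<x_0$), yields $\Delta(x)>0$ for every $x\in[1,x_0-\vep]$.

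\emph{Step 2: uniform lower bound.} Since $\overline{F}$ and $F^{\leftarrow}$ are continuous, so are $L(x)=x^{\alpha}\overline{F}(x)$ and ${\cal U}$, and the map $(x,y)\mapsto y^{-\alpha}|y^{{\cal U}(x)}-L(xy)/L(x)|$ is jointly continuous. Thus $\Delta$ is lower semicontinuous, and compactness of $[1,x_0-\vep]$ together with Step~1 gives $2\eta:=\inf_{x\in[1,x_0-\vep]}\Delta(x)>0$. I would then invoke the tightness machinery of Proposition~\ref{prop:tightness:suprema}, already set up in the proof of Theorem~\ref{thm-sublinear}, to promote this pointwise positivity into the uniform statement
\[
\prob\!\Big(\inf_{\lceil n\overline{F}(x_0-\vep)\rceil\leq k\leq n-1} D_{n,k}\geq \eta\Big)\to 1.
\]

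\emph{Step 3: conclusion and main obstacle.} On the intersection of $\{\kappa_n^\star\geq n\overline{F}(x_0-\vep)\}$ with the event of Step~2 one has $D_{n,\kappa_n^\star}\geq \eta$; since $D_{n,\kappa_n^\star}\probconv 0$ by Theorem~\ref{thm:probconv:DKN}, this intersection has vanishing probability, and hence so does the target event. The delicate point is the uniform passage in Step~2: although the reparametrised range is compact, $\Delta(x)$ is itself a supremum over $y\in[1,\infty)$ of a function depending on the slowly varying $L$, so a genuine uniform tightness bound on $(D_{n,k})_k$ across the whole range is required. Proposition~\ref{prop:tightness:suprema} has been developed precisely for this purpose, so the main remaining work is to apply it on the present compact interval and to verify that $\Delta$ is lower semicontinuous and bounded away from zero throughout $[1,x_0-\vep]$.
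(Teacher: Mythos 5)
Your proposal is correct and follows essentially the same route as the paper: reparametrise $k=[n\overline{F}(x)]$, invoke Lemma~\ref{lem-inf-t[vep,1]} (which packages the pointwise convergence of Proposition~\ref{propn:dnk:functional:limit} together with the tightness of Proposition~\ref{prop:tightness:suprema}) to get $\inf_{x\leq x_0-\vep}D_{n,[n\overline{F}(x)]}\probconv\inf_{x\leq x_0-\vep}\Delta(x)$, show this infimum is strictly positive using the minimality of $x_0$, and contradict $D_{n,\kappa_n^\star}\probconv 0$.

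The one cosmetic difference is in how positivity of the infimum is established. The paper reuses the two-case structure from Lemma~\ref{lemma:sublinear:growth:noteventually:pareto}: if $\mathbb{V}=\varnothing$ on the range, a Potter-bound argument gives a direct uniform lower bound; if $\mathbb{V}\neq\varnothing$, lower semicontinuity and compactness produce a putative minimiser $x_1$ with $\Delta(x_1)=0$, forcing ${\cal U}(x_1)=0$ and $L$ constant on $[x_1,\infty)$, contradicting minimality of $x_0$. You instead give a single unified argument: $\Delta(x)>0$ pointwise on $[F^{\leftarrow}(0),x_0-\vep]$ by the discussion after Theorem~\ref{thm-sublinear}, and then lower semicontinuity of $\Delta$ (as a supremum of jointly continuous functions) plus compactness gives $\inf\Delta>0$ directly. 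This is marginally cleaner since it avoids the case split, but it is the same underlying mechanism; both hinge on the joint continuity of $(x,y)\mapsto y^{-\alpha}|y^{{\cal U}(x)}-L(xy)/L(x)|$ on the relevant compact set (which does hold under the hypotheses, as you note). One small correction: the natural parameter range is $x\in[F^{\leftarrow}(0),x_0-\vep]$ (equivalently $t=\overline{F}(x)\in[\overline{F}(x_0-\vep),1]$), not $[1,x_0-\vep]$, since $\overline{F}(x)=1$ for $x<F^{\leftarrow}(0)$ and the reparametrisation only makes sense where $t\leq 1$.
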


The proof of Proposition \ref{PROP-EVENT-PARETO} is deferred to Appendix \ref{app-eventually-Pareto}. It relies on the proof of Theorem \ref{thm-sublinear}.


We now complete the proof of consistency of the \PLFIT{} method in the eventual Pareto case, subject to Proposition \ref{PROP-EVENT-PARETO}:
\smallskip

\noindent
{\it Proof of consistency of the \PLFIT{} method for the eventual Pareto case, subject to Proposition \ref{PROP-EVENT-PARETO}.} We again consider the split in \eqref{eq:decomposition:hill:estimator}. Recall the discussion above \eqref{tildeL-def}. For an eventual Pareto distribution, $x\mapsto a_1(x)$ is constant for $x\geq \bar{F}(x_0)$, and $b_1(x)\equiv 0$ for $x\geq \bar{F}(x_0)$. Thus,
	\eqn{
	\log \frac{a_1(\unif_{k_n- i + 1:n}^{-1})}{a_1(\unif_{k_n + 1:n}^{-1})}=\int_{\unif^{-1}_{k_n + 1: n}}^{\unif_{k_n - i + 1:n}^{-1}} \dtv u~ u^{-1} b_1(u)=0
	}
whenever $\unif_{k_n + 1:n}^{-1}\geq \bar{F}(x_0)$. We wish to apply this argument to $\kappa_n^\star$. By Proposition \ref{PROP-EVENT-PARETO}, any limit point of $\kappa_n^\star/n$ (which is a bounded sequence) is supported on $[0,\bar{F}(x_0)]$. As a result, w.h.p.\ and every $\vep>0$,
	\eqn{
	\log \frac{a_1(\unif_{\kappa_n^\star- i + 1:n}^{-1})}{a_1(\unif_{\kappa_n^\star + 1:n}^{-1})}\leq \log \Big(\frac{\sup_{u\geq \bar{F}(x_0-\vep)}a_1(u)}{\inf_{u\geq \bar{F}(x_0-\vep)}a_1(u)}\Big),
	}
which converges to 1 as $\vep \searrow 0$. A similar argument gives a lower bound that converges to 1 as $\vep \searrow 0$. The same argument applies to the $\int_{\unif^{-1}_{\kappa_n^\star + 1: n}}^{\unif_{\kappa_n^\star - i + 1:n}^{-1}}  u^{-1} b_1(u) \dtv u$ term, so that we conclude that $\TERM{2}_n(\kappa_n^\star), \TERM{3}_n(\kappa_n^\star)\probconv 0$.

It thus again suffices to investigate $\TERM{1}_n(\kappa_n^\star)$. Since $\kappa_n^\star\probconv \infty$, \eqref{term-1-analysis} shows that $\TERM{1}_n(\kappa_n^\star)\probconv 1/\alpha$, as required. This proves the consistency of the \PLFIT{} method for eventually Pareto distributions.
\qed

\section{Proof that $\kappa_n^\star$ is intermediate subject to auxiliary results}
\label{sec-proof-intermediate}
In this section, we reduce the proof of our main results in Theorems~\ref{thm:probconv:DKN}--\ref{thm-sublinear} to three lemmas and three propositions. The proofs of these auxiliary results will be postponed to Appendix \ref{sec-pfs-aux-4} below.

\subsection{Proof of Theorems~\ref{thm:probconv:DKN} and \ref{thm-growth-kappan}}
\label{SUBSEC:KAPPAN:INFTY}
To prove Theorem~\ref{thm:probconv:DKN}, we need to show that $D_{n, \kappa_n^\star} \probconv 0$ under the assumption \eqref{eq:defn:regvar:tail:dist}. While it is tempting to believe that this indeed holds, this fact is not known in the literature  and the mathematical argument behind the answer is far from being obvious. The main ingredients to the proof consist of Proposition \ref{propn:upperbound:distance}, which gives a convenient upper bound on $D_{n, \kappa_n^\star}$ as a sum of three terms, and Lemmas \ref{lemma:first:term:probconv:zero:lower:bound}--\ref{lemma:lower:bound:DKN} that bound these terms. We defer their proofs to Appendix \ref{sec-pfs-aux-4}.

As a first step, we derive a convenient upper bound on the random distance $D_{n,k}$ in the following proposition:

\begin{propn}[Upper bound on KS distance]
\label{propn:upperbound:distance}
For every $k > 1$, almost surely,
	\begin{align}
	\label{eq:upper:bound:DKN}
	D_{n,k} & \le \sup_{y \ge 1} \Big| \frac{1}{k} \sum_{i=1}^k \mbbo_{(y, \infty)} \Big( \frac{X_{n-k+i:n}}{X_{n-k:n}} \Big) - y^{-\alpha} \Big|  \nonumber \\
	& \hspace{.5cm} + \sup_{y \ge 1} \big| y^{- \widehat{\alpha}_{n, k_n}} - y^{- \alpha} \big| + \Big( \frac{X_{n:n}}{X_{n-k:n}} \Big)^{-\widehat{\alpha}_{n,k}}.
	\end{align}
\end{propn}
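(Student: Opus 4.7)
The plan is to decompose the supremum defining $D_{n,k}$ according to whether $y$ falls in the interval $[1,R_n)$ or in $[R_n,\infty)$, where $R_n := X_{n:n}/X_{n-k:n}$ is the maximal ratio. The motivation is that the empirical survival function $\bar{F}_n^{\sss\mathrm{emp}}(y) := k^{\inv}\sum_{i=1}^k \mbbo_{(y,\infty)}(X_{n-k+i:n}/X_{n-k:n})$ is supported on $[1,R_n)$ and vanishes identically on $[R_n,\infty)$, while its fitted Pareto counterpart $y^{-\widehat{\alpha}_{n,k}}$ remains strictly positive on all of $[1,\infty)$. The two regions therefore demand different arguments: a triangle inequality on the first, monotonicity of a power function on the second.

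On the region $y\in[1,R_n)$, I insert $\pm y^{-\alpha}$ and apply the triangle inequality to obtain $|\bar{F}_n^{\sss\mathrm{emp}}(y)-y^{-\widehat{\alpha}_{n,k}}| \le |\bar{F}_n^{\sss\mathrm{emp}}(y)-y^{-\alpha}| + |y^{-\alpha}-y^{-\widehat{\alpha}_{n,k}}|$. Passing to $\sup_{y\in[1,R_n)}$ on the left and enlarging to $\sup_{y\ge 1}$ on the right (which can only increase each summand) produces exactly the first two summands of the claimed upper bound.

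On the region $y\ge R_n$, every indicator $\mbbo_{(y,\infty)}(X_{n-k+i:n}/X_{n-k:n})$ vanishes, so the quantity being supped reduces to $y^{-\widehat{\alpha}_{n,k}}$. Because $\widehat{\alpha}_{n,k}=H_{n,k}^{\inv}>0$ almost surely (the Hill statistic is strictly positive whenever $F$ is continuous, as $X_{n-k+i:n}>X_{n-k:n}$ for each $i\ge 1$), the map $y\mapsto y^{-\widehat{\alpha}_{n,k}}$ is decreasing on $[1,\infty)$ and attains its supremum over $[R_n,\infty)$ at the left endpoint, yielding $R_n^{-\widehat{\alpha}_{n,k}} = (X_{n:n}/X_{n-k:n})^{-\widehat{\alpha}_{n,k}}$, which is the third summand. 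Taking the maximum of the two regional bounds completes the argument.

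The proof is essentially a bookkeeping exercise with no probabilistic input; the only subtlety is recognising that the empirical tail is compactly supported in $y$ while the fitted Pareto tail is not, which is precisely what forces the separate truncation term $(X_{n:n}/X_{n-k:n})^{-\widehat{\alpha}_{n,k}}$ to appear. I do not anticipate any real obstacle here; the purpose of this lemma is to cleanly isolate the three distinct sources of error in $D_{n,k}$ (empirical-versus-Pareto tail fluctuation, Hill-versus-true exponent discrepancy, and the boundary gap at the sample maximum) that will then be handled separately by the lemmas that follow.
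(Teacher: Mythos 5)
Your proposal is correct and follows essentially the same route as the paper's proof: split the supremum at $R_n=X_{n:n}/X_{n-k:n}$, note that the empirical tail vanishes beyond $R_n$ so the second regional supremum collapses to $R_n^{-\widehat{\alpha}_{n,k}}$, and bound the first regional supremum by inserting $\pm y^{-\alpha}$ and applying the triangle inequality, then combine via $\max(a,b)\le a+b$. The only cosmetic difference is where you place the endpoint $y=R_n$, which is immaterial since the empirical tail is already zero there.
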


Given this result, it suffices to show that there exists a sequence $(k_n)_{n\ge 1}$ such that the upper bound in \eqref{eq:upper:bound:DKN} converges to $0$ in probability. This choice of $(k_n)_{n\geq 1}$ may depend on the distribution function $F$. We prove in the following lemma that the first term in \eqref{eq:upper:bound:DKN} converges to $0$ in probability if $(k_n)_{n\geq 1}$ is an intermediate sequence:

\begin{lemma}[KS to limiting Pareto vanishes]
\label{lemma:first:term:probconv:zero:lower:bound}
If $(k_n)_{n \ge 1}$ is an intermediate sequence, then
\begin{align}
\sup_{y \ge 1} \Big| \frac{1}{k} \sum_{i=1}^k \mbbo_{(y, \infty)} \Big( \frac{X_{n-k+i:n}}{X_{n-k:n}} \Big) - y^{-\alpha} \Big|  \probconv 0
\end{align}
as $n \to \infty$ when $\overline{F}$ satisfies \eqref{eq:defn:regvar:tail:dist}.
\end{lemma}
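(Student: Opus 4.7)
Let $\widehat{G}_n(y)$ denote the empirical survival function appearing in the statement, set $t:=X_{n-k_n:n}$, and define
\[
g_t(y):=\frac{\overline{F}(ty)}{\overline{F}(t)}, \qquad y\geq 1.
\]
The strategy is to split via the triangle inequality,
\[
\sup_{y\geq 1}\big|\widehat{G}_n(y)-y^{-\alpha}\big|\ \leq\ \sup_{y\geq 1}\big|\widehat{G}_n(y)-g_t(y)\big|\ +\ \sup_{y\geq 1}\big|g_t(y)-y^{-\alpha}\big|,
\]
and to handle the two pieces by (i) a distribution-free empirical-process argument and (ii) the uniform convergence theorem for regularly varying functions.

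\textbf{Stochastic term via conditioning.} The classical conditional representation for order statistics says that, given $X_{n-k_n:n}=t$, the $k_n$ largest observations are distributed as the order statistics of $k_n$ i.i.d.\ draws from $F$ conditioned on $\{X>t\}$. Consequently, the rescaled variables $X_{n-k_n+i:n}/t$ have, conditionally on $t$, survival function $g_t$. The Dvoretzky--Kiefer--Wolfowitz inequality is distribution-free, so for every $\vep>0$,
\[
\prob\Big(\sup_{y\geq 1}\big|\widehat{G}_n(y)-g_t(y)\big|>\vep\ \Big|\ X_{n-k_n:n}=t\Big)\ \leq\ 2\e^{-2k_n\vep^2},
\]
a bound uniform in $t$. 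Since $k_n\to\infty$, integrating over the distribution of $t$ yields $\sup_{y\geq 1}|\widehat{G}_n(y)-g_t(y)|\probconv 0$.

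\textbf{Deterministic term.} I first observe that $t=X_{n-k_n:n}\probconv\infty$: by the quantile transform, $t\eqd F^{\leftarrow}(V_{n-k_n:n})$ with $V_{n-k_n:n}\probconv 1$ because $k_n/n\to 0$, and $F^{\leftarrow}(u)\to\infty$ as $u\uparrow 1$ since $\overline{F}\in{\rm RV}_{-\alpha}$ has unbounded support. It therefore suffices to show that $\sup_{y\geq 1}|g_t(y)-y^{-\alpha}|\to 0$ as $t\to\infty$. On any compact interval $[1,M]$, this is the classical uniform convergence theorem for regularly varying functions \cite[Proposition~0.5]{resnick:1986}. For the tail $[M,\infty)$, I use monotonicity: $g_t$ is non-increasing, so $g_t(y)\leq g_t(M)$, and by the previous step $g_t(M)\to M^{-\alpha}$. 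Thus for $t$ sufficiently large,
\[
\sup_{y\geq M}\big|g_t(y)-y^{-\alpha}\big|\ \leq\ g_t(M)+M^{-\alpha}\ \leq\ 3M^{-\alpha},
\]
which is made arbitrarily small by choosing $M$ large. Combining the two ranges yields uniform convergence on $[1,\infty)$.

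\textbf{Conclusion and main obstacle.} Putting the two estimates together gives the claim. The principal technical point is the promotion of the uniform convergence of $g_t$ from compact intervals (where Resnick's theorem applies directly) to the whole half-line $[1,\infty)$; I expect this Dini--P\'olya--style monotonicity argument, together with controlling the common tail of $g_t$ and $y^{-\alpha}$ by a single value $g_t(M)$, to be the most delicate ingredient. Everything else reduces either to the distribution-free DKW bound or to the standard quantile representation of uniform order statistics.
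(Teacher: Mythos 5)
Your proof is correct, and it takes a genuinely different route from the paper's. The paper proceeds through the tail empirical measure: it cites \cite[Theorem~4.2 ({\sc Step}~2)]{resnick:2007} to get $\widehat{\nu}_{n,k_n}\vconv\nu_\alpha$ in probability for an intermediate sequence, passes to almost surely convergent subsequences via the subsequence principle, upgrades the resulting pointwise convergence to uniform convergence by a Glivenko--Cantelli/P\'olya argument (\cite[Theorem~19.1]{vandervaart:2000}), and finally converts back to convergence in probability. Your proof instead performs a structural decomposition at the outset: it splits the supremum into a \emph{stochastic} deviation $\sup_y|\widehat G_n - g_t|$ and a \emph{deterministic} bias $\sup_y|g_t - y^{-\alpha}|$, handling the first by conditioning on $X_{n-k_n:n}=t$ (under which the top $k_n$ ratios are i.i.d.\ with survival $g_t$) and applying the distribution-free DKW bound $2\e^{-2k_n\vep^2}$, and the second by the Karamata uniform convergence theorem on compacts together with the monotonicity/truncation trick to control the tail $y\geq M$. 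The trade-off is roughly this: the paper's argument is shorter once one is willing to quote Resnick's tail-empirical-measure result as a black box, while yours is more self-contained, gives an explicit exponential rate for the stochastic error, and cleanly isolates the ``second-order regular variation'' bias $\sup_y|g_t-y^{-\alpha}|$ from the sampling noise --- a separation the paper's route does not make explicit. Both ultimately rely on the same two ingredients (a Glivenko--Cantelli-type concentration and uniform convergence of $\overline F(ty)/\overline F(t)$), just packaged differently. One small remark: your DKW step uses that $(k_n)$ is a deterministic intermediate sequence so that the conditional bound integrates out directly; that matches the hypotheses of this lemma as the paper uses it, so no issue arises.
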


For the second term in the upper bound \eqref{eq:upper:bound:DKN}, it is sufficient to have $\widehat{\alpha}_{n, k_n} \probconv \alpha$:

\begin{lemma}[Consistency of $\widehat{\alpha}_{n, k_n}$ implies vanishing second term]
\label{lemma:second:term:upper:bound:DKN:negligible}
If $\widehat{\alpha}_{n, k_n} \probconv \alpha$, then $\sup_{y \ge 1} |y^{- \widehat{\alpha}_{n,k_n}} - y^{- \alpha}| \probconv 0$ as $n \to \infty$.
\end{lemma}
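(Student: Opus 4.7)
The plan is to show that the deterministic function $g\colon \beta \mapsto \sup_{y \ge 1}|y^{-\beta}-y^{-\alpha}|$ is continuous at $\beta = \alpha$, and then apply the continuous mapping theorem together with the hypothesis $\widehat{\alpha}_{n,k_n}\probconv \alpha$. Since $g(\alpha)=0$, this immediately gives the desired conclusion.

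To establish continuity of $g$ at $\alpha$, I would argue as follows. For any $\beta > 0$ and any $y \ge 1$, writing $\tfrac{d}{dt}y^{-t} = -y^{-t}\log y$ and integrating from $\alpha$ to $\beta$ gives
\begin{align}
|y^{-\beta}-y^{-\alpha}| \le |\beta-\alpha| \sup_{t \in [\min(\alpha,\beta),\,\max(\alpha,\beta)]} y^{-t}\log y.
\end{align}
For each fixed $t>0$, the function $y \mapsto y^{-t}\log y$ on $[1,\infty)$ attains its maximum at $y = \e^{1/t}$, yielding $\sup_{y \ge 1} y^{-t}\log y = 1/(\e t)$. Thus, restricting to $\beta$ with $|\beta-\alpha|\le \alpha/2$ (so that $t \ge \alpha/2$ throughout the relevant range),
\begin{align}
g(\beta) = \sup_{y \ge 1}|y^{-\beta}-y^{-\alpha}| \le \frac{2}{\e\alpha}\,|\beta-\alpha|,
\end{align}
which is the local Lipschitz bound we need.

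Finally, given any $\vep>0$, set $\eta = \min\{\alpha/2,\ \vep \e\alpha/2\}$. The hypothesis $\widehat{\alpha}_{n,k_n}\probconv \alpha$ yields $\prob(|\widehat{\alpha}_{n,k_n}-\alpha|\le \eta) \to 1$, and on this event the bound above gives $g(\widehat{\alpha}_{n,k_n}) \le \vep$. This proves $\sup_{y\ge 1}|y^{-\widehat{\alpha}_{n,k_n}}-y^{-\alpha}|\probconv 0$. The only mildly delicate point is the optimization in $y$ (which would otherwise be an obstacle, since the supremum is over an unbounded set), but this is resolved cleanly by the explicit closed form for $\sup_{y\ge 1}y^{-t}\log y$; no part of the argument is really an obstacle.
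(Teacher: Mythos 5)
Your proof is correct and takes essentially the same approach as the paper's: condition on the event that $|\widehat{\alpha}_{n,k_n}-\alpha|$ is small, then bound the deterministic supremum over $y\ge 1$ by a quantity that vanishes as the exponent gap shrinks. Your mean-value bound combined with the closed form $\sup_{y\ge 1} y^{-t}\log y = 1/(\e t)$ is a slightly cleaner way to carry out the calculus than the paper's direct optimization of $y^{-\alpha+\eta}-y^{-\alpha}$, but the underlying argument is identical.
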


For the third term in the upper bound \eqref{eq:upper:bound:DKN}, it is necessary and sufficient to have $k_n \uparrow \infty$:

\begin{lemma}[Analysis of ratios of order statistics]
\label{lemma:lower:bound:DKN}
When $\overline{F}$ satisfies \eqref{eq:defn:regvar:tail:dist},
\begin{enumerate}

\item $D_{n, k_n} \ge \e^{- K}$ almost surely when $k_n \le K$; and

\item $(X_{n:n}/ X_{n-k_n : n})^{- \widehat{\alpha}_{n, k_n}}  \probconv 0$ as $n \to \infty$ if and only if $k_n \to \infty$.

\end{enumerate}
\end{lemma}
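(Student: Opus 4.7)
My plan is to handle the two parts separately, with part~(1) a sharp deterministic bound and part~(2) following from the subsequence principle combined with consistency of the Hill estimator.

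For part~(1), the idea is to evaluate the supremum in the definition of $D_{n,k}$ at $y = X_{n:n}/X_{n-k:n}$. Since each ratio $X_{n-k+i:n}/X_{n-k:n}$ is at most this value, the indicators $\mbbo_{(y,\infty)}(\cdot)$ all vanish and the empirical sum equals $0$; consequently
\[
D_{n,k} \;\ge\; \Big(\frac{X_{n:n}}{X_{n-k:n}}\Big)^{-\widehat{\alpha}_{n,k}}.
\]
The term $i=k$ of the sum defining $\widehat{\alpha}_{n,k}^{-1}$ equals $\log(X_{n:n}/X_{n-k:n})$ and every other term is non-negative, so $\widehat{\alpha}_{n,k}^{-1}\ge (1/k)\log(X_{n:n}/X_{n-k:n})$. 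Rearranging and exponentiating yields $(X_{n:n}/X_{n-k:n})^{-\widehat{\alpha}_{n,k}}\ge \e^{-k}$, hence $D_{n,k_n}\ge \e^{-k_n}\ge \e^{-K}$ almost surely whenever $k_n\le K$.

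For part~(2), the ``only if'' direction is immediate from part~(1): if $k_n\not\to\infty$, then some subsequence is bounded by a constant $K$, and the chain of inequalities above forces $(X_{n:n}/X_{n-k_n:n})^{-\widehat{\alpha}_{n,k_n}}\ge \e^{-K}>0$ along that subsequence, precluding convergence to $0$ in probability. For the ``if'' direction, I would rewrite the conclusion as $\widehat{\alpha}_{n,k_n}\log(X_{n:n}/X_{n-k_n:n})\probconv \infty$ and invoke the subsequence principle: starting from any subsequence, extract a further subsequence along which $k_n/n\to c\in[0,1]$. When $c=0$, the sequence is intermediate, so Theorem~\ref{thm:consistency-Hill-random} gives $\widehat{\alpha}_{n,k_n}\probconv \alpha>0$, while the classical asymptotics $X_{n-k_n:n}\sim U(n/k_n)$ and $X_{n:n}\sim U(n)$ combined with Potter's bounds for $U\in{\rm RV}_{1/\alpha}$ force $X_{n:n}/X_{n-k_n:n}\probconv \infty$ at essentially rate $k_n^{1/\alpha}$, so the product diverges.

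When $c>0$, continuity of $F^{\leftarrow}$ and the standard convergence of sample quantiles yield $X_{n-k_n:n}\probconv \tau_c := F^{\leftarrow}(1-c)\in[1,\infty)$, whereas $X_{n:n}\probconv \infty$ by $\overline{F}\in{\rm RV}_{-\alpha}$, giving $X_{n:n}/X_{n-k_n:n}\probconv \infty$. For the Hill denominator I would use the decomposition
\[
\widehat{\alpha}_{n,k_n}^{-1}
 = \frac{n}{k_n}\cdot \frac{1}{n}\sum_{i=1}^{n} (\log X_i - \log X_{n-k_n:n})\mbbo_{X_i > X_{n-k_n:n}},
\]
and apply a Glivenko--Cantelli type LLN, uniform in $t$ over a compact neighbourhood of $\tau_c$, to obtain $\widehat{\alpha}_{n,k_n}^{-1}\probconv \exptn[\log(X/\tau_c)\mid X>\tau_c]$, a finite positive quantity (finiteness from $\alpha>0$). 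Thus $\widehat{\alpha}_{n,k_n}$ tends in probability to a positive constant and the product again diverges. The main technical obstacle is exactly this $c>0$ case, because the standard Hill theory focuses on intermediate sequences; however, continuity at $\tau_c$ of the functional $t\mapsto \exptn[(\log X-\log t)\mbbo_{X>t}]$ together with a uniform Glivenko--Cantelli estimate over a neighbourhood of $\tau_c$ makes the argument routine.
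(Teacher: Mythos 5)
Your proposal is correct. Part~(1) and the ``only if'' direction of Part~(2) follow essentially the same route as the paper: the paper also obtains $D_{n,k}\ge (X_{n:n}/X_{n-k:n})^{-\widehat\alpha_{n,k}}$ by observing that the empirical measure vanishes at $y=X_{n:n}/X_{n-k:n}$ (this is the content of \eqref{eq:tail:empirical:upper:bound}--\eqref{eq:tail:empirical:upper:bound:term2}), and also uses $H_{n,k}\ge \tfrac1k\log(X_{n:n}/X_{n-k:n})$ (the paper's \eqref{tightness-alpha_n}) to conclude $(X_{n:n}/X_{n-k:n})^{-\widehat\alpha_{n,k}}\ge \e^{-k}$. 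Your handling of the ``only if'' direction, reading off the almost-sure lower bound $\ge\e^{-K}$ directly from Part~(1), is slightly cleaner than the paper's tightness argument but amounts to the same computation.

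Where you genuinely diverge is the ``if'' direction of Part~(2). The paper's proof simply asserts that $H_{n,k_n}=O_{\sss\prob}(1)$ whenever $k_n\uparrow\infty$, citing Theorem~\ref{thm:consistency-Hill-random}; but that theorem is stated for \emph{intermediate} sequences ($k_n/n\probconv 0$), and does not on its face cover $k_n$ of linear order in $n$. Your subsequence-principle argument closes this gap: along subsequences with $k_n/n\to 0$ you invoke Theorem~\ref{thm:consistency-Hill-random}, while along subsequences with $k_n/n\to c>0$ you derive the deterministic limit $H_{n,k_n}\probconv \exptn[\log(X/\tau_c)\mid X>\tau_c]$ via a uniform Glivenko--Cantelli argument around $\tau_c=F^\leftarrow(1-c)$. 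That limit is exactly $H_t$ with $t=c$ in the paper's Lemma~\ref{lemma:aslimit:hill}, which the paper establishes only later in the appendix and does not cite here. So your version is a self-contained proof of the lemma at the generality at which it is stated, whereas the paper's version is a shorter argument that is fully justified only for the intermediate-sequence applications actually made of the lemma. Both deliver the conclusion; your route costs an extra Glivenko--Cantelli step but removes the implicit restriction to $k_n=o(n)$.
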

\smallskip

With these results in hand, we are now ready to complete the proof of Theorem~\ref{thm:probconv:DKN}:

\begin{proof}[Proof of Theorem~\ref{thm:probconv:DKN}]
Choose any intermediate sequence $(k_n)_{n\ge 1}$. By the results of Mason \cite{mason:1982}, $\widehat{\alpha}_{n, k_n} \probconv \alpha$, so that Lemma \ref{lemma:second:term:upper:bound:DKN:negligible} applies. Then $D_{n, \kappa_n^\star} \le D_{n , k_n} \probconv 0$ by Lemmas~\ref{lemma:first:term:probconv:zero:lower:bound}, \ref{lemma:second:term:upper:bound:DKN:negligible} and \ref{lemma:lower:bound:DKN}.
\end{proof}

We further use Theorem~\ref{thm:probconv:DKN} to complete the proof of Theorem~\ref{thm-growth-kappan}:

\begin{proof}[Proof of Theorem~\ref{thm-growth-kappan}]
Note that it is enough to show that $\prob(\kappa_n^\star \le M) \to 0$ as $n \to \infty$ for any $M >1$. By Lemma~\ref{lemma:lower:bound:DKN}, $\prob (\kappa_n^\star \le M) \le \prob( D_{n, \kappa_n^\star} \ge \e^{-M})$. Using Theorem~\ref{thm:probconv:DKN}, we can immediately deduce that $\prob(D_{n, \kappa_n^\star} \ge \e^{-M}) \to 0$ as $n \to \infty$ and hence Theorem~\ref{thm-growth-kappan} follows.
\end{proof}

\subsection{Optimizer $\kappa_n^\star$ does not grow linearly: Proof of Theorem~\ref{thm-sublinear}} \label{SUBSEC:PROOF:SUBLINEAR:GROWTH}
In this section, we prove Theorem~\ref{thm-sublinear}, which is the central part of our proof. In this section, we will reduce this proof to two propositions (Proposition~\ref{propn:dnk:functional:limit} and \ref{prop:tightness:suprema}), and two lemmas (Lemmas~\ref{lem-inf-t[vep,1]} and \ref{lemma:sublinear:growth:noteventually:pareto}), whose proofs are deferred to Section \ref{sec-pfs-aux-42}.

To prove prove Theorem~\ref{thm-sublinear}, we need to show that, for every $\varepsilon > 0$,
	\begin{align}
	\lim_{n \to \infty} \prob(\kappa_n^\star/n > \varepsilon) = 0. \label{eq:aim:upper:bound:kappan}
	\end{align}
Here, $\kappa_n^\star=\argmin_{1 \le k \le n} D_{n, k}$.

\paragraph{Outline of the proof} We start by explaining the outline of the proof. For every $y \ge 1$, we define the sample version of the tail empirical measure (see (4.16) in \cite{resnick:2007}) by
	\begin{align}
	\label{eq:defn:sample:tail:empirical:measure}
    	\widehat{\nu}_{n,k} (y, \infty)  = \frac{1}{k} \sum_{i=1}^n \bdelta_{X_i/X_{n- k : n} } (y, \infty),
	\end{align}
where $ \bdelta$ denotes Kronecker's delta, so that
	\eqn{
	D_{n, k}=\sup_{y\geq 1} \Big|\widehat{\nu}_{n,k} (y, \infty)-y^{-\widehat{\alpha}_{n,k}}\Big|.
	}
Define ${\cal D}_{j:k} = \inf_{ j \le i \le k} D_{n,i}$ for every pair $j<k$ of positive integers. Using this notation, we get that $D_{n, \kappa_n^\star} = \min( {\cal D}_{1:[n\varepsilon] - 1}, {\cal D}_{[n\varepsilon] : n}), $ where $[x]$ denotes the largest integer less than or equal to $x$. For any $\eta >0$, it is immediate that
	$$
	\prob(\kappa_n^\star/n > \varepsilon) \le \prob \Big( \kappa_n^\star/n >\varepsilon, ~  D_{n, \kappa_n^\star} \le \eta \Big) + \prob(D_{n , \kappa_n^\star} > \eta).
	$$
It follows from Theorem~\ref{thm:probconv:DKN} that $\prob(D_{n,\kappa_n^\star} > \eta) = o(1)$. We use the observation that $\{\kappa_n^\star/n > \varepsilon\} \subseteq \{D_{n, \kappa_n^\star} = {\cal D}_{[n\varepsilon]  : n}\}$, where the inclusion follows since $\kappa_n^\star$ is the {\em smallest} minimiser of $k\mapsto D_{n,k}$. This establishes that to prove \eqref{eq:aim:upper:bound:kappan}, it is enough to show that
	\begin{align}
	0&=\lim_{\eta \searrow 0} \lim_{n \to \infty} \prob ( {\cal D}_{[n\varepsilon]: n} \le \eta) \nonumber \\
	& = \lim_{\eta \searrow 0} \lim_{n \to \infty} \prob \Big( \inf_{t \in [\varepsilon,1]} \sup_{y \ge 1} \Big| \widehat{\nu}_{n, [nt]} ([y, \infty)) - y^{-\widehat{\alpha}_{n, [nt]}} \Big| \le \eta \Big) \nonumber \\
	& = \lim_{\eta \searrow 0} \lim_{n \to \infty} \prob \Big( \inf_{t \in [\varepsilon, 1]} \sup_{y \ge 1} |Z_n(t,y)| \le \eta \Big),
	\label{eq:upper:bound:reduced:aim}
	\end{align}
where
	\begin{align}
	\label{eq:defn:Z_n}
	Z_n(t,y) := \widehat{\nu}_{n,[nt]}((y, \infty)) - y^{- \widehat{\alpha}_{n, [nt]}}.
	\end{align}
The remainder of the proof of Theorem~\ref{thm-sublinear} is now organised as follows. In Proposition \ref{propn:dnk:functional:limit}, we show that, for every $t>0$, the process $(Z_n(t,y))_{y\geq 1}$ converges in probability to a {\em deterministic} limiting process. This, in particular, also implies that $\sup_{y\geq 1}|Z_n(t,y)|$ converges, for every fixed $t>0$. Proposition \ref{propn:dnk:functional:limit} extends this convergence to {\em tightness} in $t\in[\vep,1]$ for $t\mapsto \sup_{y\geq 1}|Z_n(t,y)|$. This, in particular, also proves that the infimum over $t\in[\vep,1]$ of $\sup_{y\geq 1}|Z_n(t,y)|$ also converges in probability, as made precise in Lemma \ref{lem-inf-t[vep,1]}. Finally, Lemma \ref{lemma:sublinear:growth:noteventually:pareto} shows that the limiting variable is strictly positive when $F$ is not eventually Pareto (see also the discussion below Theorem \ref{thm-sublinear}). Since this yields a contradiction with Theorem~\ref{thm:probconv:DKN}, it follows that $\kappa_n^\star/n > \varepsilon$ cannot hold with high probability. This then completes the proof of Theorem \ref{thm-sublinear}.
\smallskip

We now start to provide the details in the above outline. Define
	\begin{align}
	Z(t, y) := t^{-1}\overline{F} \big(y F^{\leftarrow}(1-t) \big) - \exp \Big\{- t \Big( \int_0^t \log  \frac{F^{\leftarrow}(1-s)}{ F^{\leftarrow}(1-t)} \dtv s  \Big)^{-1} \log y \Big\}. \label{eq:defn:Zty}
	\end{align}
The main aim of the proof will be to show that
	\begin{align}
	\inf_{t \in [\varepsilon, 1]} \sup_{y \ge 1} |Z_n(t,y)| \probconv \inf_{t \in [\varepsilon, 1]} \sup_{y \ge 1} |Z(t,y)|, \label{eq:prob:limit:Z_n}
	\end{align}
with the limit being strictly positive except in the eventual Pareto case. In the following proposition, we show that $(Z(t,y))_{y\geq 1}$ is the limit in probability of $(Z_n(t,y))_{y \ge 1}$ pointwise in $t \in [\varepsilon,1]$:
%
%
%
%

\begin{propn}[Pointwise convergence of $\sup_{y \ge 1} |Z_n(t, y)|$]
\label{propn:dnk:functional:limit}
Suppose that $\overline{F}$ and $F^{\leftarrow}$ are continuous functions. Then, for $t \in [\varepsilon,1]$,
	\begin{align}
	(Z_n(t,y))_{y \ge 1} \probconv (Z(t,y))_{y \ge 1}
	\end{align}
in the uniform topology on $D[1,\infty)$. As $y\mapsto Z(t,y)$ is continuous in $y$ for $y\geq 1$, it follows that
	\eqn{
	\sup_{y \ge 1} |Z_n(t, y)| \probconv \sup_{y \ge 1} \big|Z(t, y) \big|.
	}
\end{propn}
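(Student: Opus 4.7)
My plan is to split $Z_n(t,y) - Z(t,y)$ into an empirical-tail piece and a Pareto-exponent piece, and to show each vanishes uniformly in $y\geq 1$ in probability. Setting $\alpha^\star(t) := t\bigl(\int_0^t \log\frac{F^{\leftarrow}(1-s)}{F^{\leftarrow}(1-t)}\dtv s\bigr)^{-1}$ (finite and positive thanks to the continuity and strict monotonicity of $F^{\leftarrow}$ on the support of $F$), I decompose
\begin{align*}
Z_n(t,y) - Z(t,y) = \bigl[\widehat{\nu}_{n,[nt]}((y,\infty)) - t^{-1}\overline{F}(y F^{\leftarrow}(1-t))\bigr] - \bigl[y^{-\widehat{\alpha}_{n,[nt]}} - y^{-\alpha^\star(t)}\bigr],
\end{align*}
and handle each bracket separately.

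For the first bracket, I would rewrite $\widehat{\nu}_{n,[nt]}((y,\infty)) = (n/[nt])\,\overline{F}_n(y X_{n-[nt]:n})$, where $\overline{F}_n$ is the empirical survival function of the sample. The Glivenko--Cantelli theorem yields $\sup_{x}|\overline{F}_n(x) - \overline{F}(x)| \asconv 0$, and continuity of $F^{\leftarrow}$ at $1-t$ gives $X_{n-[nt]:n} \asconv F^{\leftarrow}(1-t)$. Because $\overline{F}$ is continuous, non-increasing, and vanishes at infinity, it is uniformly continuous on $[F^{\leftarrow}(1-t)/2,\infty)$; combining these facts with $n/[nt]\to t^{-1}$ forces uniform-in-$y$ vanishing of the first bracket.

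For the second bracket, I first establish $\widehat{\alpha}_{n,[nt]}\probconv \alpha^\star(t)$; note that this does not follow from Theorem~\ref{thm:consistency-Hill-random} because $k=[nt]$ is not intermediate. The key identity is
\begin{align*}
H_{n,[nt]} = \frac{n}{[nt]}\,\Phi_n\bigl(X_{n-[nt]:n}\bigr) - \log X_{n-[nt]:n}, \qquad \Phi_n(x) := \frac{1}{n}\sum_{i=1}^n \log X_i\cdot \mbbo_{\{X_i > x\}},
\end{align*}
and the strong law gives pointwise $\Phi_n(x)\asconv \Phi(x):=\int_x^\infty \log y\,\dtv F(y)$. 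The integral $\Phi$ is finite (because $\overline{F}\in{\rm RV}_{-\alpha}$ ensures $\int_1^\infty \log y\,\dtv F(y) < \infty$) and continuous on $[1,\infty)$; since $\Phi_n$ and $\Phi$ are non-increasing with common limit $0$ at infinity, Polya's lemma upgrades pointwise to uniform convergence on $[1,\infty)$. Evaluating at $X_{n-[nt]:n}\asconv F^{\leftarrow}(1-t)$ and using the change of variables $\Phi(F^{\leftarrow}(1-t))=\int_0^t \log F^{\leftarrow}(1-s)\,\dtv s$ gives $H_{n,[nt]}\probconv t^{-1}\int_0^t \log \frac{F^{\leftarrow}(1-s)}{F^{\leftarrow}(1-t)}\,\dtv s$, hence $\widehat{\alpha}_{n,[nt]}\probconv \alpha^\star(t)$. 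To promote this to uniform-in-$y$ convergence of $y^{-\widehat{\alpha}_{n,[nt]}}$, I invoke the elementary bound $\sup_{y\geq 1}|y^{-a}-y^{-b}| \leq |a-b|/(e\min(a,b))$ for $a,b>0$, which follows from $y^{-a}-y^{-b} = -\int_b^a \log y\cdot y^{-s}\,\dtv s$ together with $\sup_{y\geq 1}\log y\cdot y^{-s} = 1/(es)$.

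Adding the two bounds yields $\sup_{y\geq 1}|Z_n(t,y) - Z(t,y)| \probconv 0$, which is the required convergence in the uniform topology on $D[1,\infty)$; continuity of $y\mapsto Z(t,y)$ and the continuous mapping theorem for the sup norm then give $\sup_{y\geq 1}|Z_n(t,y)|\probconv \sup_{y\geq 1}|Z(t,y)|$. I expect the main obstacle to be the convergence $\widehat{\alpha}_{n,[nt]}\probconv \alpha^\star(t)$ in the \emph{bulk} regime $k=[nt]$, since the Mason-style extreme-value arguments behind Theorem~\ref{thm:consistency-Hill-random} are tailored to intermediate sequences and do not apply here; the replacement is the monotone Glivenko--Cantelli/Polya argument for $\Phi_n$, which hinges on the integrability of $\log y$ against $\dtv F$ granted by the regular variation of $\overline{F}$.
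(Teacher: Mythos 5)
Your decomposition of $Z_n(t,y)-Z(t,y)$ into the empirical-tail bracket and the Pareto-exponent bracket is exactly the paper's, but your treatment of the second bracket is genuinely different and rather clean. The paper proves $H_{n,[nt]}\asconv H_t$ (their Lemma~\ref{lemma:aslimit:hill}) by passing to the uniform-order-statistics representation $\log F^{\leftarrow}(1-\unif_{i:n})$ and then combining the strong law, a Glivenko--Cantelli argument for a bracketing class, and the quantile Glivenko--Cantelli theorem; you instead use the elementary identity $H_{n,[nt]} = \tfrac{n}{[nt]}\Phi_n(X_{n-[nt]:n}) - \log X_{n-[nt]:n}$ with $\Phi_n(x)=\tfrac1n\sum_i\log X_i\,\mbbo_{\{X_i>x\}}$, upgrade the pointwise strong law for $\Phi_n$ to uniform convergence by a P\'olya argument for the bounded monotone limit $\Phi$, and then plug in the consistent quantile. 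Your change of variables $\Phi(F^{\leftarrow}(1-t))=\int_0^t\log F^{\leftarrow}(1-s)\,\dtv s$ is correct, the finiteness of $\exptn[\log X]$ under regular variation holds, and $\alpha^\star(t)\in(0,\infty)$ follows from the same integrability; your explicit bound $\sup_{y\ge1}|y^{-a}-y^{-b}|\le|a-b|/(e\min(a,b))$ replaces the paper's mean-value estimate and is a tidy improvement. This route avoids the uniform representation entirely and is arguably more transparent.

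There is, however, one genuine gap, in the first bracket. You claim that uniform continuity of $\overline F$ on $[F^{\leftarrow}(1-t)/2,\infty)$, together with $X_{n-[nt]:n}\asconv F^{\leftarrow}(1-t)$, forces $\sup_{y\ge1}|\overline F(yX_{n-[nt]:n})-\overline F(yF^{\leftarrow}(1-t))|\to0$. That does not follow: the perturbation $|yX_{n-[nt]:n}-yF^{\leftarrow}(1-t)|=y\,|X_{n-[nt]:n}-F^{\leftarrow}(1-t)|$ grows linearly in $y$, so no modulus of uniform continuity on a half-line can absorb it. The fix is the paper's compact-plus-tail splitting: for $1\le y\le K$ one is in a bounded region where the scaled perturbation is uniformly small and uniform continuity applies, while for $y\ge K$ one bounds the difference by $2\,\overline F\bigl(K(F^{\leftarrow}(1-t)-\delta)\bigr)$, which is small because $\overline F$ vanishes at infinity; one then sends $\delta\to0$ and $K\to\infty$ in the right order (see their $\mathrm{I}_\delta^{\sss(3)}$ term in the proof of \eqref{eq:quantile:empirical:conv}). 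You mention the vanishing of $\overline F$ at infinity but never invoke it as a separate tail estimate, and without that step the argument as written does not close.
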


Define $\overline{Z}_n(t) = \sup_{y \ge 1} |Z_n(t,y)|$ and $\overline{Z}(t) = \sup_{y \ge 1} |Z(t,y)|$ for $t > 0$, so that Proposition \ref{propn:dnk:functional:limit} shows that $\overline{Z}_n(t)\probconv \overline{Z}(t)$ pointwise in $t\in [\vep,1]$. Note, however, that pointwise convergence in $t$ is not enough to conclude convergence of the process $(\overline{Z}_n(t))_{t \in [\vep,1]}$ and we additionally need tightness. In the next proposition, we prove tightness of the process $(\overline{Z}_n(t))_{t \in [\varepsilon,1]}$:

\begin{propn}[Tightness of $\overline{Z}_n(t) = \sup_{y \ge 1} |Z_n(t,y)|$]
\label{prop:tightness:suprema}
The process $\overline{Z}_n(t) = \sup_{y \ge 1} |Z_n(t,y)|$ satisfies that, almost surely and uniformly in $t \in [\varepsilon,1]$,
	\begin{align}
	\sup_{|h| \le \delta}  \big| \overline{Z}_n(t+h) - \overline{Z}_n(t) \big| \le f(\delta)
	\end{align}
where $f$ satisfies that $\lim_{\delta \to 0} f(\delta) = 0$. Consequently, $(\overline{Z}_n(t))_{t\in [\vep,1]}$ is a tight sequence of stochastic processes.
\end{propn}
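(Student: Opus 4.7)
The plan is to reduce the modulus-of-continuity estimate for $\overline{Z}_n$ to a deterministic equicontinuity estimate for the limit $Z(t,y)$ identified in Proposition~\ref{propn:dnk:functional:limit}. With
\begin{equation*}
\Delta_n:=\sup_{(t,y)\in[\vep,1]\times[1,\infty)}|Z_n(t,y)-Z(t,y)|,\qquad \omega(\delta):=\sup_{t\in[\vep,1]}\sup_{|h|\le\delta}\sup_{y\ge 1}|Z(t+h,y)-Z(t,y)|,
\end{equation*}
the triangle inequality yields $\sup_{t\in[\vep,1]}\sup_{|h|\le\delta}|\overline{Z}_n(t+h)-\overline{Z}_n(t)|\le 2\Delta_n+\omega(\delta)$, so the proposition will follow once (a) $\Delta_n\probconv 0$ and (b) $\omega(\delta)\to 0$ as $\delta\to 0$. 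One then sets $f(\delta)=\omega(\delta)+o_n(1)$, the $o_n(1)$ remainder being absorbed on a high-probability event for $n$ large, which is enough for the tightness conclusion.

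For step (a), I would promote Proposition~\ref{propn:dnk:functional:limit} from pointwise in $t$ to joint uniform convergence in $(t,y)$. On a compact slab $y\in[1,M]$, the ingredients are (i) classical Glivenko--Cantelli for the $X_i$'s, (ii) the uniform convergence $\sup_{t\in[\vep,1]}|X_{n-[nt]:n}-F^{\leftarrow}(1-t)|\to 0$ (following from continuity of $F^{\leftarrow}$), and (iii) a uniform-in-$t$ Hill consistency $\sup_{t\in[\vep,1]}|\widehat{\alpha}_{n,[nt]}-\alpha(t)|\probconv 0$, obtained by upgrading the decomposition~\eqref{eq:decomposition:hill:estimator} and the bounds~\eqref{term-1-analysis}--\eqref{term-23-analysis} into supremum-over-$t$ statements. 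The elementary continuity $\sup_{y\ge 1}|y^{-a}-y^{-b}|\to 0$ as $|a-b|\to 0$, valid for $a,b$ in any compact subinterval of $(0,\infty)$, then transfers this into uniform control of $y^{-\widehat{\alpha}_{n,[nt]}}$ on $[1,M]$. For $y>M$, Potter-type bounds give that $\widehat{\nu}_{n,[nt]}((y,\infty))+y^{-\widehat{\alpha}_{n,[nt]}}$ tends to $0$ as $y\to\infty$, uniformly in $t\in[\vep,1]$ with high probability, so that choosing $M$ large suppresses the tail contribution uniformly in $n$.

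For step (b), the explicit formula~\eqref{eq:defn:Zty} shows that $(t,y)\mapsto Z(t,y)$ is jointly continuous on $[\vep,1]\times[1,\infty)$: continuity in $t$ is inherited from that of $\overline{F}$, $F^{\leftarrow}$ and of the limit $\alpha(t):=t\,(\int_0^t\log(F^{\leftarrow}(1-s)/F^{\leftarrow}(1-t))\,\dtv s)^{-1}$ of the Hill estimator, and the uniform decay $\sup_{t\in[\vep,1]}|Z(t,y)|\to 0$ as $y\to\infty$ follows from regular variation of $\overline{F}$ together with $\inf_{t\in[\vep,1]}\alpha(t)>0$. One may then restrict the $y$-supremum to a compact $[1,M]$, on which joint uniform continuity delivers $\omega(\delta)\to 0$. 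The main obstacle is step (a), specifically the uniform convergence on the full unbounded $y$-range: Potter's bounds must be applied uniformly in $t\in[\vep,1]$, with an additional layer controlling the random ratio $X_{n-[nt]:n}/F^{\leftarrow}(1-t)$, and the uniform-in-$t$ Hill consistency requires a careful chaining of the single-intermediate-sequence arguments used in the proof of Theorem~\ref{thm:consistency-Hill-random}.
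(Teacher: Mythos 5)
Your reduction is genuinely different from the paper's and, at the level of strategy, cleaner. The paper never proves uniform-in-$(t,y)$ convergence of $Z_n$ to $Z$; instead it bounds the increment $\sup_{|h|\le\delta}|\overline{Z}_n(t+h)-\overline{Z}_n(t)|$ directly by splitting it into an empirical-measure increment (controlled by Lemma~\ref{lemma:In11:tightness}) and a Hill increment, the latter decomposed further into estimation error ($\mathrm{I}_n^{\sss(21)}, \mathrm{I}_n^{\sss(22)}$, controlled via Lemma~\ref{lemma:aslimit:hill} and Proposition~\ref{propn:flt:hills:estimator}) plus a deterministic term $\mathrm{I}^{\sss(23)}$ involving $t\mapsto H_t$. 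Your route — bounding the increment by $2\Delta_n+\omega(\delta)$, then proving $\Delta_n\probconv 0$ and $\omega(\delta)\to 0$ — avoids the bookkeeping of the $\mathrm{I}^{\sss(2j)}$ terms, and it has a useful side consequence the paper does not exploit: once $\omega(\delta)\to 0$ you have shown that $\overline{Z}(t)$ is \emph{continuous} in $t$ (not merely lower semicontinuous, as the paper is content to establish in the proof of Lemma~\ref{lemma:sublinear:growth:noteventually:pareto}), which simplifies the compactness/attainment step there. Your verification of $\omega(\delta)\to 0$ is also essentially right: joint continuity of $Z$ on $[\vep,1]\times[1,M]$ plus the uniform-in-$t$ decay of $|Z(t,y)|$ as $y\to\infty$ (using $F^{\leftarrow}(1-t)\ge 1$ to kill the first term and $\inf_{t\in[\vep,1]}H_t^{-1}>0$ to kill the second) gives equicontinuity.

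There is, however, a concrete gap in how you propose to prove the uniform-in-$t$ Hill convergence, which is the crux of your step (a)(iii). You say it is ``obtained by upgrading the decomposition~\eqref{eq:decomposition:hill:estimator} and the bounds~\eqref{term-1-analysis}--\eqref{term-23-analysis} into supremum-over-$t$ statements'' and ``a careful chaining of the single-intermediate-sequence arguments.'' That cannot work here: those bounds are engineered for \emph{intermediate} $k_n$ (i.e.\ $k_n/n\to 0$), and the reason $\TERM{2}_n(k)$ and $\TERM{3}_n(k)$ in \eqref{term-23-analysis} vanish is precisely that $\unif_{k:n}^{-1}\to\infty$ when $k\le n/K$, so that $a_1$ and $b_1$ are evaluated in their limiting regime. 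For $k=[nt]$ with $t\in[\vep,1]$ fixed, $\unif_{[nt]:n}^{-1}\to t^{-1}$ stays bounded, these terms do \emph{not} vanish, and the Hill statistic does not converge to $1/\alpha$ at all — its limit is $H_t$, which depends on the whole body of $F$ and not only on the tail index. So the Mason-style intermediate-sequence argument is the wrong tool; one needs a separate functional strong law for $H_{n,[nt]}$ uniformly on $[\vep,1]$ (this is exactly Proposition~\ref{propn:flt:hills:estimator} in the paper, proved via a bracketing Glivenko--Cantelli argument rather than the Karamata representation used in Theorem~\ref{thm:consistency-Hill-random}). Once you substitute that functional limit theorem for your (iii), the rest of your step (a) — Glivenko--Cantelli for $F_n$, uniform quantile convergence $\sup_t|X_{n-[nt]:n}-F^{\leftarrow}(1-t)|\to 0$, and a compact-plus-tail split in $y$ — does deliver $\Delta_n\to 0$, and the proposal goes through.
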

\smallskip

By Propositions \ref{propn:dnk:functional:limit}-\ref{prop:tightness:suprema}, we obtain \eqref{eq:prob:limit:Z_n} using tightness and finite-dimensional convergence, as stated in the next lemma:

\begin{lemma}[Convergence in probability of ${\cal D}_{[n\varepsilon]  : n}$]
\label{lem-inf-t[vep,1]}
If $\overline{F}$ and $F^\leftarrow$ are continuous functions, then
	\begin{align}
	\inf_{t \in [\varepsilon,1]} \sup_{y \ge 1} |Z_n(t,y)| \probconv \inf_{t \in [\varepsilon,1]} \sup_{y \ge 1} |Z(t,y)|.
	\end{align}
Consequently, also ${\cal D}_{[n\varepsilon]  : n}\probconv \inf_{t \in [\varepsilon,1]} \sup_{y \ge 1} |Z(t,y)|$.
\end{lemma}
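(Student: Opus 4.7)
The plan is to upgrade the pointwise convergence in probability from Proposition~\ref{propn:dnk:functional:limit} to \emph{uniform} convergence in probability of $\overline{Z}_n$ to $\overline{Z}$ on $[\vep,1]$, and then to invoke the fact that $g \mapsto \inf_{t \in [\vep,1]} g(t)$ is $1$-Lipschitz with respect to the sup norm. This immediately yields convergence of the infima in probability (the limit being deterministic makes weak convergence and convergence in probability coincide).

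First I would establish that the deterministic limit $\overline{Z}$ is itself continuous on $[\vep,1]$. Since Proposition~\ref{prop:tightness:suprema} furnishes the modulus-of-continuity bound uniformly in $n$, I can pass to the limit in $|\overline{Z}_n(s)-\overline{Z}_n(t)| \le f(\delta)$ along an almost-surely convergent subsequence extracted via Proposition~\ref{propn:dnk:functional:limit}, obtaining $|\overline{Z}(s)-\overline{Z}(t)| \le f(\delta)$ whenever $|s-t| \le \delta$, and hence continuity.

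Next I would run a standard finite-net argument for the uniform convergence. Given $\eta>0$, pick $\delta$ with $2f(\delta)<\eta$, fix a $\delta$-net $\vep=t_0<t_1<\cdots<t_K=1$, and, for each $t \in [\vep,1]$, bound
\begin{equation*}
|\overline{Z}_n(t)-\overline{Z}(t)| \le |\overline{Z}_n(t)-\overline{Z}_n(t_j)| + |\overline{Z}_n(t_j)-\overline{Z}(t_j)| + |\overline{Z}(t_j)-\overline{Z}(t)|,
\end{equation*}
where $t_j$ is the nearest grid point. The first and third terms are at most $f(\delta)$ (almost surely, and deterministically, by the previous paragraph), while the middle term vanishes in probability simultaneously across the finitely many $t_j$'s by Proposition~\ref{propn:dnk:functional:limit}. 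This yields $\sup_{t\in[\vep,1]}|\overline{Z}_n(t)-\overline{Z}(t)| \probconv 0$, and the $1$-Lipschitz estimate $|\inf g - \inf h| \le \sup_{t} |g(t)-h(t)|$ then gives $\inf_{t\in[\vep,1]}\overline{Z}_n(t)\probconv \inf_{t\in[\vep,1]}\overline{Z}(t)$. The ``consequently'' clause follows directly from the definitions: $Z_n(t,y)$ depends on $t$ only through $[nt]$, so $\inf_{t\in[\vep,1]}\overline{Z}_n(t)= \min_{[n\vep]\le k\le n-1} D_{n,k}=\mathcal{D}_{[n\vep]:n}$.

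I do not anticipate any real obstacle here: all the substantive analytic content is already packaged into Propositions~\ref{propn:dnk:functional:limit} and~\ref{prop:tightness:suprema}, and what remains is essentially the random-function analogue of Arzel\`a--Ascoli. The only mild subtlety is that $t\mapsto \overline{Z}_n(t)$ is piecewise constant (with jumps at the points $k/n$) rather than continuous, but the uniform-in-$t$ oscillation bound from Proposition~\ref{prop:tightness:suprema} already controls these jumps, and the argument above never uses continuity of $\overline{Z}_n$, only continuity of the limit $\overline{Z}$.
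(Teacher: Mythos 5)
Your proof is correct, and it reaches the conclusion from the same two ingredients the paper uses (Propositions~\ref{propn:dnk:functional:limit} and~\ref{prop:tightness:suprema}), but by a more elementary route. The paper appeals to abstract process-convergence theory: tightness via Billingsley's Theorem~8.2 plus finite-dimensional convergence gives weak convergence of $(\overline{Z}_n(t))_{t\in[\vep,1]}$ in $D[\vep,1]$, and a continuous-mapping argument in the $J_1$-topology (citing Whitt) is then applied to the extremal functional. Your finite-net argument unpacks this machinery, delivers the stronger conclusion $\sup_{t\in[\vep,1]}|\overline{Z}_n(t)-\overline{Z}(t)|\probconv 0$, and makes explicit a point the paper relies on implicitly but never records: the limit $\overline{Z}$ is continuous on $[\vep,1]$, obtained by passing the uniform modulus-of-continuity bound of Proposition~\ref{prop:tightness:suprema} to the limit along an almost surely convergent subsequence. (This in fact improves the remark below the statement of Lemma~\ref{lemma:sublinear:growth:noteventually:pareto} that $\overline{Z}$ ``may not be continuous'' and is merely lower semicontinuous: under Proposition~\ref{prop:tightness:suprema} it is uniformly continuous on $[\vep,1]$.) One minor caveat: the bound in Proposition~\ref{prop:tightness:suprema} holds eventually almost surely, i.e.\ for all $n \ge N(\omega)$, rather than for every $n$; this does not affect your finite-net argument, since for each fixed $\eta$ and $\delta$ the event that the bound fails has probability tending to zero.
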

\smallskip

It follows from Propositions~\ref{propn:dnk:functional:limit} and \ref{prop:tightness:suprema}, and  Lemma~\ref{lem-inf-t[vep,1]}, that
	\begin{align}
	\label{eq:lim_n:prob}
	\prob ( {\cal D}_{[n\varepsilon]: n} \le \eta) \rightarrow \prob \Big( \inf_{t \in [\vep,1]} \sup_{y \ge 1}|Z(t,y)| \le \eta \Big).
	\end{align}

It is clear that $ \inf_{t \in [\varepsilon, 1]}\sup_{y \ge 1}|Z(t,y)|$ is a deterministic function. Therefore, the probability in \eqref{eq:upper:bound:reduced:aim} actually equals either 0 or 1 depending on whether the number $ \inf_{t \in [\varepsilon, 1]}\sup_{y \ge 1}|Z(t,y)|$ is positive or zero. In the next lemma, we show that the limit is positive unless $F$ is eventually Pareto:


\begin{lemma}[Positivity of the limiting infimum except when $F$ is eventually Pareto]
\label{lemma:sublinear:growth:noteventually:pareto}
$\inf_{t \in [\varepsilon,1]} \sup_{y \ge 1}|Z(t,y)| > 0$ for every $\varepsilon > 0$ unless $F$ is eventually Pareto.
\end{lemma}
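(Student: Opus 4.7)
The plan is to reduce the lemma to the pointwise positivity assertion appearing in the discussion right after Theorem \ref{thm-sublinear}, using a lower-semicontinuity and compactness argument. First, with the change of variable $x_t := F^{\leftarrow}(1-t)$ (so that $t=\overline{F}(x_t)$) and using $\overline{F}(u)=u^{-\alpha}L(u)$, a direct manipulation of \eqref{eq:defn:calu} gives
\begin{align*}
\alpha-{\cal U}(x_t) = \frac{\overline{F}(x_t)}{\int_0^{\overline{F}(x_t)} \log\bigl(F^{\leftarrow}(1-s)/x_t\bigr)\,ds},
\end{align*}
which is precisely the coefficient of $-\log y$ in the exponent of the second term of \eqref{eq:defn:Zty}. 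Hence
\begin{align*}
|Z(t,y)| = y^{-\alpha}\Bigl|\,L(x_t y)/L(x_t)-y^{{\cal U}(x_t)}\,\Bigr|,
\end{align*}
so that $g(t) := \sup_{y\ge 1}|Z(t,y)|$ coincides with the right-hand side of \eqref{Dn-conv} at $x=x_t$. The lemma thus reduces to proving $\inf_{t\in[\varepsilon,1]} g(t) > 0$.

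The key observation is that, for each fixed $y\ge 1$, the map $t\mapsto Z(t,y)$ is continuous on $[\varepsilon,1]$. This needs (a) continuity of $\overline{F}$ and $F^{\leftarrow}$ (given), and (b) continuity, finiteness and strict positivity of the integral $I(t):=\int_0^t \log\bigl(F^{\leftarrow}(1-s)/F^{\leftarrow}(1-t)\bigr)\,ds$ on $[\varepsilon,1]$; the last point follows because the joint continuity hypotheses on $F$ and $F^{\leftarrow}$ force $F^{\leftarrow}$ to be strictly increasing, making the non-negative integrand strictly positive on $(0,t)$. Consequently $g$, being a pointwise supremum over $y \ge 1$ of continuous functions of $t$, is lower semicontinuous on the compact interval $[\varepsilon,1]$, and by the Weierstrass extreme value theorem for LSC functions it attains its infimum at some $t^\star\in[\varepsilon,1]$. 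By the pointwise positivity argument spelled out in the discussion after Theorem \ref{thm-sublinear}, applied at $x=x_{t^\star}\ge 1$, we have $g(t^\star)>0$ unless $L$ is constant on $[x_{t^\star},\infty)$, i.e., unless $F$ is eventually Pareto with threshold $\le x_{t^\star}$. Since this is excluded by hypothesis, $g(t^\star)>0$, and the lemma follows.

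The main obstacle is really this continuity check, in particular verifying that $I(t)$ is continuous and bounded away from $0$ uniformly in $t\in[\varepsilon,1]$; finiteness of $I(t)$ near $s=0$ uses the regular variation $F^{\leftarrow}(1-s)=U(1/s)$ with $U\in{\rm RV}_{1/\alpha}$, which gives $|\log F^{\leftarrow}(1-s)|=O(|\log s|)$ as $s\downarrow 0$. Once these technical points are in place, the lemma reduces cleanly to the already-established pointwise positivity, without any further uniform estimates in $t$.
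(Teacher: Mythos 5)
Your proposal is correct and, up to the cosmetic reparametrization $t \leftrightarrow x=F^{\leftarrow}(1-t)$, uses the same core machinery as the paper's proof: lower semicontinuity of $\sup_{y\geq 1}|Z(\cdot,y)|$ as a pointwise supremum of continuous functions (the paper invokes Rockafellar--Wets for this, you derive it directly), compactness of the parameter interval to guarantee the infimum is attained, and the regular-variation/Potter argument to obtain pointwise strict positivity at the minimizer unless $L$ is eventually constant. The one structural difference is that the paper first splits according to whether $\mathbb{V}:=\{x:{\cal U}(x)=0\}$ is empty, treating $\mathbb{V}=\varnothing$ by a direct uniform Potter bound and reserving the LSC/compactness argument for the case $\mathbb{V}\neq\varnothing$; your unified treatment subsumes both cases in one stroke and is arguably cleaner for this lemma, though it does not produce the explicit Case-I sub-argument that the paper reuses verbatim in the proof of Proposition~\ref{PROP-EVENT-PARETO} in Appendix~\ref{app-eventually-Pareto}. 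Your continuity check (continuity and uniform positivity of $I(t)=\int_0^t\log\bigl(F^{\leftarrow}(1-s)/F^{\leftarrow}(1-t)\bigr)\,\dtv s$ on $[\vep,1]$, with integrability at $s=0$ from $F^{\leftarrow}(1-s)\in{\rm RV}_{-1/\alpha}$ at $0$) is exactly the verification that the paper leaves implicit when asserting that $x\mapsto |Z(\overline{F}(x),y)|$ is continuous for each $y$.
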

\smallskip

The proof of Lemma \ref{lemma:sublinear:growth:noteventually:pareto} is challenging since $\overline{Z}(t) = \sup_{y \ge 1} |Z(t,y)|$ may not be a continuous function of $t$, but is a lower semicontinuous function, instead. We are now ready to complete the proof of Theorem~\ref{thm-sublinear}:

\begin{proof}[Proof of Theorem~\ref{thm-sublinear}]
Assume that $F$ is not eventually Pareto. Then $\inf_{t \in [\varepsilon,1]} \overline{Z}(t)>0$ by Lemma \ref{lemma:sublinear:growth:noteventually:pareto}. Therefore, for all $\eta \in (0, \inf_{t \in [\varepsilon,1]} \overline{Z}(t))$, the probability in \eqref{eq:lim_n:prob} equals $0$. Hence the limit in \eqref{eq:upper:bound:reduced:aim} equals zero, and the proof of Theorem~\ref{thm-sublinear} follows.
\end{proof}


\paragraph{\bf Acknowledgement}
The authors thank Parthanil Roy for helpful discussions. AB acknowledges the support provided by {\sc{EURANDOM}} for his visit to Eindhoven University of Technology during November 17-23, 2019.

\bibliographystyle{plain}

\appendix

\section{Proof of auxiliary results used in Section~\ref{SUBSEC:KAPPAN:INFTY}}
\label{sec-pfs-aux-4}
In this section, we prove the auxiliary results from Section~\ref{SUBSEC:KAPPAN:INFTY}, namely, Proposition~\ref{propn:upperbound:distance} and Lemmas~\ref{lemma:first:term:probconv:zero:lower:bound}--\ref{lemma:lower:bound:DKN}, in that order.

\begin{proof}[Proof of Proposition~\ref{propn:upperbound:distance}]
Let $\nu_\alpha$ be the sigma-finite measure on $((0, \infty), \mathscr{B}(0, \infty))$ such that $\nu_\alpha (y, \infty) = y^{-\alpha}$ for every $y > 0$. 
With this notation, and recalling $\widehat{\nu}_{n,k} (y, \infty)$ in \eqref{eq:defn:sample:tail:empirical:measure}, we obtain that, almost surely,
	\begin{align}
 	D_{n,k} & = \sup_{y \ge 1} \Big| \widehat{\nu}_{n, k} (y, \infty) - \nu_{\widehat{\alpha}_{n, k}}(y, \infty) \Big| \nonumber \\
    	& = \max \Big( \sup_{1 \le y \le X_{n:n} / X_{n-k: n}} \Big| \widehat{\nu}_{n, k} (y, \infty) - \nu_{\widehat{\alpha}_{n, k}}(y, \infty) \Big|, \nonumber \\
    	& \hspace{1cm} \sup_{y > {X_{n:n}}/{X_{n-k: n}}} \Big| \widehat{\nu}_{n, k} (y, \infty) - y^{\widehat{\alpha}_{n, k}} \Big| \Big). \label{eq:tail:empirical:upper:bound}
	\end{align}
Note that $\widehat{\nu}_{n, k} (X_{n:n}/ X_{n- k : n}, \infty) = 0$, which implies that
	\begin{align}
	\sup_{y > {X_{n:n}}/{X_{n-k: n}}} \Big| \widehat{\nu}_{n, k} (y, \infty) - y^{\widehat{\alpha}_{n, k}} \Big| = \Big( \frac{X_{n:n}}{ X_{n-k : n}} \Big)^{- \widehat{\alpha}_{n, k}}.
	\label{eq:tail:empirical:upper:bound:term2}
	\end{align}
Using that $\max(a, b) \leq a + b$ for any $a, b >0$, and combining \eqref{eq:tail:empirical:upper:bound} and \eqref{eq:tail:empirical:upper:bound:term2}, we obtain
	\begin{align}
	D_{n, k_n} & \le  \sup_{ 1 \le y \le X_{n:n}/ X_{n-k: n}} \big| \widehat{\nu}_{n, k}(y, \infty) - \nu_{\widehat{\alpha}_{n, k}}(y, \infty) \big| + \Big( \frac{X_{n:n}}{X_{n-k:n}} \Big)^{- \widehat{\alpha}_{n, k}} \nonumber \\
    	& =: \Iterm{1} + \Iterm{2}.  \label{eq:upper:bound:DKN:ineq:one}
	\end{align}
Using the triangle inequality, we obtain
	\begin{align}
	\Iterm{1}  & \le \sup_{y \ge 1} \Big| \widehat{\nu}_{n, k}(y, \infty) - \nu_\alpha(y, \infty) \Big| + \sup_{1 \le y \le X_{n: n}/ X_{n-k : n}} \Big| y^{- \widehat{\alpha}_{n, k}} - y^{-\alpha} \Big| \nonumber \\
    	& : = \Iterm{11} + \Iterm{12}. \label{eq:upper:bound:DKN:ineq:two}
	\end{align}
Also note that
	\begin{align}
	\Iterm{12} & =   \sup_{1 \le y \le X_{n:n}/X_{n-k:n}} \big| \exp \big\{ - \alpha \log y\} - \exp \big\{ - \widehat{\alpha}_{n,k} \log y \big\} \big| \nonumber \\
	&  \le \big| \widehat{\alpha}_{n, k} - \alpha \big| \log \big(X_{n: n}/X_{n-k:n} \big),
	\label{eq:upper:bound:DKN:ineq:three}
	\end{align}
since $|\e^{-x} - \e^{-y}| \le |x - y|$ for all $x, y \ge 0$. Thus, the proof follows from \eqref{eq:upper:bound:DKN:ineq:one}, \eqref{eq:upper:bound:DKN:ineq:two} and \eqref{eq:upper:bound:DKN:ineq:three}.
\end{proof}

\begin{proof}[Proof of Lemma~\ref{lemma:first:term:probconv:zero:lower:bound}]
Let $\mathscr{M}_+$ denote the space of all locally finite point measures $\mu$ on $(0, \infty)$, that is, $\mu(B) < \infty$ for all $B \in \mathscr{B}((0, \infty))$ and $0 \notin {\rm cl}(B)$ where ${\rm cl}(B)$ denotes the closure of the set $B$.

We say a sequence $(\mu_n)_{n \ge 1}$ converges {\em vaguely} to a measure $\mu$ in $\mathscr{M}_+$ (and write $\mu_n \vconv \mu$) if $\int f \dtv \mu_n \to \int f \dtv \mu $ for all bounded and continuous functions $f$ that vanishes in a neighbourhood of $0$.   It follows from \cite[Theorem~{4.2 ({\sc Step} 2)}]{resnick:2007} that $\widehat{\nu}_{n, k_n} \vconv \nu_\alpha$ in probability if $(k_n)_{n \ge 1}$ is an intermediate sequence.

Therefore, given a subsequence $(n_m)_{m \ge 1}$, there exists a further subsequence $(n_{m_l})_{l \ge 1}$ such that $\widehat{\nu}_{n_{m_l}, k_{n_{m_l}}} \vconv \nu_\alpha$ almost surely. As $[y, \infty)$ is a compact subset of $(0, \infty)$ for all $y \ge 1$, it follows from vague convergence (see \cite[Theorem~{3.2(b)}]{resnick:2007}) that $\widehat{\nu}_{n_{m_l}, k_{_{n_{m_l}}}}([y, \infty)) \to \nu_\alpha([y, \infty))$ almost surely for every $y \ge 1$. Thus, we can use the Glivenko-Cantelli theorem (\cite[Theorem~{19.1}]{vandervaart:2000}) to conclude that $\sup_{y \ge 1} |\nu_{n_{m_l}, k_{n_{m_l}}}([y, \infty)) - \nu_\alpha([y, \infty))| \to 0$ almost surely. We can use subsequential characterization of convergence in probability (see \cite[Theorem~2.3.2]{durrett:2010}) once again to conclude the proof.
\end{proof}

%

\begin{proof}[Proof of Lemma~\ref{lemma:second:term:upper:bound:DKN:negligible}]
Fix $\vep > 0$ and $0 < \eta < \min( \alpha/2, \log(\vep + 1)/ \log 2)$. Then we immediately have the decomposition
	\begin{align}
	& \prob \Big( \sup_{y \ge 1} \Big| y^{- \widehat{\alpha}_{n, k_n}} - y^{- \alpha}  \Big| > \vep \Big)  \nonumber \\
	& \le \prob \Big( \sup_{y \ge 1} \Big| y^{- \widehat{\alpha}_{n, k_n}} - y^{- \alpha} \Big| > \vep, ~|\widehat{\alpha}_{n, k_n} - \widehat{\alpha}| \le  \eta \Big)
	+ \prob \Big( |\widehat{\alpha}_{n, k_n} - \alpha| > \eta \Big).
	\label{eq:upper:bound:sup:polynomial}
	\end{align}
The second term in \eqref{eq:upper:bound:sup:polynomial} vanishes as $n \to \infty$ as a consequence of $\widehat{\alpha}_{n, k_n} \probconv \alpha$. Therefore, it suffices to show that the first term in \eqref{eq:upper:bound:sup:polynomial} vanishes.
\smallskip

Throughout the following line of reasoning, we assume that the event $\{|\widehat{\alpha}_{n, k_n} - \alpha| < \eta\}$ occurs.
Observe that
	\begin{align*}
	\sup_{y \ge 1} |y^{- \widehat{\alpha}_{n,k_n}} - y^{- \alpha}|
	&= \sup_{y \ge 1} \max \big( y^{- \widehat{\alpha}_{n, k_n}} - y^{- \alpha}, y^{- \alpha} - y^{- \widehat{\alpha}_{n, k_n}} \big)\\
	&\le \sup_{y \ge 1} \max ( y^{- \alpha + \eta} - y^{- \alpha}, y^{- \alpha} - y^{- \alpha - \eta}).
	\end{align*}
In addition,  $\max(y^{- \alpha + \eta} - y^{- \alpha}, y^{- \alpha} - y^{- \alpha - \eta}) = y^{- \alpha + \eta} - y^{- \alpha}$. Combining these two estimates leads to
	\begin{align}
	\sup_{y \ge 1} \big| y^{- \widehat{\alpha}_{n, k_n}} - y^{- \alpha} \big| \le \sup_{y \ge 1} (y^{- \alpha + \eta} - y^{- \alpha}) = \sup_{y \ge 0} \Big(\e^{(- \alpha + \eta)  y} - \e^{- \alpha y} \Big).
	\end{align}
The supremum on the r.h.s.\ is attained at $y = \log (\alpha/ (\alpha - \eta))$,  so that
	\begin{align}
	\sup_{y \ge 0} \Big(\e^{(- \alpha + \eta)  y} - \e^{- \alpha  y} \Big) \le \Big( \frac{\alpha}{\alpha- \eta} \Big)^{- \alpha} \Big[ \Big(\frac{\alpha}{\alpha - \eta} \Big)^{- \eta}  - 1 \Big] \le 2^{\eta} - 1,
	\end{align}
using the facts that $\alpha/(\alpha - \eta) > 1$ and $\eta < \alpha /2$. We have also assumed that $\eta < \log ((\vep + 1)/2)$ implying $2^{\eta} - 1 < \vep$. We conclude that the first probability appearing in \eqref{eq:upper:bound:sup:polynomial} equals zero, completing the proof.
\end{proof}

\begin{proof}[Proof of Lemma~\ref{lemma:lower:bound:DKN}]
Let $k_n \le K$ for all $n \ge 1$. Then,
	\begin{align}
	\Big( \frac{X_{n:n}}{X_{n-k_n:n}} \Big)^{- \widehat{\alpha}_{n, k_n}} = \exp \Big\{ - H_{n,k_n}^{\inv} \log \frac{X_{n:n}}{X_{n-k_n:n}} \Big\} \ge \e^{-K}, \label{eq:finite:kn:lowerbound}
	\end{align}
since trivially
	\eqn{
	\label{tightness-alpha_n}
	H_{n,k_n} \ge \frac{1}{K} \log \frac{X_{n:n}}{X_{n-k_n:n}}.
	}
Therefore, the first claim in Lemma~\ref{lemma:lower:bound:DKN} follows from \eqref{eq:tail:empirical:upper:bound} and \eqref{eq:tail:empirical:upper:bound:term2} combined with \eqref{eq:finite:kn:lowerbound}.

For the second claim, we note that $\big( \log(X_{n:n}/ X_{n-k_n:n}) \big)^{-1} $ is $o_{\sss \prob}(1)$ if and only if $k_n \uparrow \infty$.
Further, $H_{n,k_n}=O_{\sss \prob}(1)$ when $k_n \uparrow \infty$ by Theorem \ref{thm:consistency-Hill-random} (which even shows that $H_{n,k_n}\probconv 1/\alpha<\infty$).
Combining both estimates, we conclude that, when $k_n \uparrow \infty$,
	\begin{align}
	\Big( \log \frac{X_{n:n}}{X_{n-k_n:n}} \Big)^{-1} H_{n, k_n} \probconv 0.
	\end{align}
When $k_n\leq K$, on the other hand, $\widehat{\alpha}_{n, k_n}=1/H_{n,k_n}$ remains tight by \eqref{tightness-alpha_n}, and $X_{n:n}/X_{n-k_n:n}\leq X_{n:n}/X_{n-K:n}$ also remains tight, so that
$(X_{n:n}/ X_{n-k_n : n})^{- \widehat{\alpha}_{n, k_n}}$ does not converge to zero in probability. This completes the proof of the second claim in Lemma~\ref{lemma:lower:bound:DKN}.
\end{proof}

\section{Proofs of the auxiliary results in Section~\ref{SUBSEC:PROOF:SUBLINEAR:GROWTH}}
\label{sec-pfs-aux-42}
In this section, we prove the auxiliary results in Section~\ref{SUBSEC:PROOF:SUBLINEAR:GROWTH}, in the order Proposition~\ref{propn:dnk:functional:limit}, Proposition~\ref{prop:tightness:suprema}, Lemma~\ref{lem-inf-t[vep,1]} and Lemma~\ref{lemma:sublinear:growth:noteventually:pareto}.

\subsection{Proof of Proposition~\ref{propn:dnk:functional:limit}}




We start with the almost sure limit of the Hill estimator and this will be used as the key ingredient in the following proof and also in proof of Proposition~\ref{prop:tightness:suprema}.

\begin{lemma}[Almost sure limit of $H_{n,[nt]}$]
\label{lemma:aslimit:hill}
Under the assumptions stated in Proposition~\ref{propn:dnk:functional:limit}, for every $t \in [\varepsilon,1]$,
\begin{align}
H_{n,[nt]} \asconv H_t:=\frac{1}{t} \int_0^t \log \frac{F^{\leftarrow}( 1-s)}{F^{\leftarrow}(1 - t)} \dtv s.\label{eq:aim:flt:hill}
\end{align}
\end{lemma}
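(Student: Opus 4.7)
The plan is to realize the observations via the probability integral transform, writing $X_i = F^{\leftarrow}(U_i)$ for i.i.d.\ Uniform$(0,1)$ random variables $(U_i)_{i\geq 1}$, so that $X_{j:n}=F^{\leftarrow}(U_{j:n})$ almost surely. Under this coupling,
\begin{align*}
H_{n,[nt]} = \frac{1}{[nt]}\sum_{j=n-[nt]+1}^{n}\log F^{\leftarrow}(U_{j:n}) \;-\; \log F^{\leftarrow}(U_{n-[nt]:n}).
\end{align*}
The second (anchor) term is handled by classical Glivenko--Cantelli for the uniform empirical quantile process: $U_{n-[nt]:n}\asconv 1-t$, and since $F^{\leftarrow}$ is continuous at $1-t$ by assumption, $\log F^{\leftarrow}(U_{n-[nt]:n})\asconv \log F^{\leftarrow}(1-t)$.

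For the first sum, set $g(u):=\log F^{\leftarrow}(u)\geq 0$ (valid because $F^{\leftarrow}(0)\geq 1$ implies $F^{\leftarrow}(u)\geq 1$ for all $u\in[0,1]$), and introduce the monotone non-increasing processes
\begin{align*}
G_n(u) := \frac{1}{n}\sum_{i=1}^n g(U_i)\,\mbbo\{U_i>u\}, \qquad G(u):=\int_u^1 g(v)\,\dtv v.
\end{align*}
Because the $U_i$ are a.s.\ distinct, $G_n(U_{n-[nt]:n}) = \frac{1}{n}\sum_{j=n-[nt]+1}^n g(U_{j:n})$, so it suffices to analyse $G_n$ at the random point $U_{n-[nt]:n}$. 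By the strong law of large numbers, $G_n(u)\asconv G(u)$ pointwise in $u$, provided $G(0)=\exptn[\log X]<\infty$; this integrability follows from $\overline{F}\in{\rm RV}_{-\alpha}$ with $\alpha>0$, which yields $\exptn[X^\vep]<\infty$ for every $\vep\in(0,\alpha)$ and hence $\exptn[\log X]<\infty$.

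Since $G$ is continuous on the compact interval $[0,1]$ and both $G_n$ and $G$ are monotone non-increasing, P\'olya's theorem upgrades the pointwise a.s.\ convergence to uniform convergence on $[0,1]$. Combined with $U_{n-[nt]:n}\asconv 1-t$ and continuity of $G$ at $1-t$, this yields
\begin{align*}
\frac{1}{n}\sum_{j=n-[nt]+1}^n\log F^{\leftarrow}(U_{j:n}) \;=\; G_n(U_{n-[nt]:n}) \;\asconv\; G(1-t) \;=\; \int_0^t\log F^{\leftarrow}(1-s)\,\dtv s
\end{align*}
(the last equality by the substitution $s=1-v$). Dividing by $[nt]/n\to t$ and subtracting the anchor limit gives $H_{n,[nt]}\asconv \frac{1}{t}\int_0^t\log F^{\leftarrow}(1-s)\,\dtv s-\log F^{\leftarrow}(1-t)=H_t$, as required.

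The main obstacle is the control of the endpoint $u=0$ in the P\'olya step: because $g(u)$ blows up as $u\uparrow 1$, both the pointwise SLLN at $u=0$ (which is the statement $\exptn[\log X]<\infty$) and the continuity of $G$ at $0$ rely on genuine integrability and not merely local integrability of $g$. This is the single place in the argument where the tail-index assumption $\alpha>0$ from $\overline{F}\in{\rm RV}_{-\alpha}$ is essential; elsewhere only continuity of $F^{\leftarrow}$ and monotonicity are used.
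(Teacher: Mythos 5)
Your proof is correct, but it takes a genuinely different route from the paper's. The paper represents the truncated sum $\frac{1}{n}\sum_{i=1}^{[nt]}\log F^{\leftarrow}(1-\unif_{i:n})$ directly and compares it to the fixed-threshold functional $\frac{1}{n}\sum_{i=1}^n\mbbo(\unif_i\le t)\log F^{\leftarrow}(1-\unif_i)$; it then controls their difference by an explicit ``band'' argument (the indices in one sum but not the other are those $\unif_i$ lying between $t$ and $\unif_{[nt]:n}$, and the resulting contribution vanishes via Glivenko--Cantelli and letting the band shrink), and applies the SLLN to the deterministic-threshold sum. You instead introduce the monotone empirical functional $G_n(u)=\frac1n\sum_i g(U_i)\mbbo\{U_i>u\}$ with $g=\log F^{\leftarrow}$, observe that the random-threshold sum is exactly $G_n(U_{n-[nt]:n})$, get pointwise SLLN convergence $G_n(u)\asconv G(u)$, upgrade to uniform convergence on $[0,1]$ by a P\'olya/Dini-type argument for monotone functions with a continuous limit, and then substitute the random point using $U_{n-[nt]:n}\asconv 1-t$ and continuity of $G$. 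Your route is cleaner and avoids the explicit indicator bookkeeping, at the cost of invoking the monotone-uniform-convergence principle; it has the incidental advantage that the uniform convergence of $G_n$ on $[0,1]$ already gives the uniform-in-$t$ statement needed for Proposition~\ref{propn:flt:hills:estimator} with essentially no extra work, whereas the paper proves that proposition separately by a bracketing-number Glivenko--Cantelli argument. Both approaches hinge on the same single use of regular variation, namely $\exptn[\log X]<\infty$ (equivalently, local integrability of $\log F^{\leftarrow}(1-\cdot)$ near zero, which the paper checks via Potter bounds and you via moment bounds), so the identification of where $\alpha>0$ enters is also accurate. One small wording point: the P\'olya step's delicate endpoint is $u=0$ in the sense that $G(0)=\exptn[\log X]$ is what requires integrability, but the underlying blow-up of $g$ is at $u=1$; you state this correctly in substance but slightly ambiguously.
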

\smallskip

We now start proving Proposition~\ref{propn:dnk:functional:limit}, and begin by giving an outline to reduce the proof to the two claims in \eqref{eq:quantile:empirical:conv} and \eqref{eq:functional:limit:hill} below.
The proof will then be completed by proving \eqref{eq:quantile:empirical:conv} and \eqref{eq:functional:limit:hill}.

We need to show that, as $n \to \infty$,
	\begin{align}
	\sup_{y \ge 1} \big|Z_n(t,y) - Z(t,y) \big| \probconv 0.
	\end{align}
Let $F_n$ denote the empirical distribution function, that is,
	\eqn{
	\label{Fn-def}
	F_n(x) = n^{-1} \sum_{i=1}^n \mbbo(X_i \le x),
	}
and $\overline{F}_n = 1 - F_n$ the empirical tail distribution. We then first observe that
	\begin{align*}
	\widehat{\nu}_{n, [nt]}(y, \infty) & = \frac{1}{[nt]} \sum_{i =1}^{[nt]} \mbbo_{(y, \infty)} \Big( \frac{X_{n - [nt] + i: n}}{X_{n -[nt]:n }} \Big)\\
	&  = \frac{1}{[nt]} \sum_{i =1}^{n} \mbbo(X_i > y X_{n - [nt]:n}) = \frac{n}{[nt]} \overline{F}_n(y X_{n - [nt]: n}).
	\end{align*}
The proof of Proposition~\ref{propn:dnk:functional:limit} follows by combining \eqref{eq:defn:Z_n} and \eqref{eq:defn:Zty} with the following two convergence claims:
	\begin{align}
	& \sup_{y \ge 1} \Big| \frac{n}{[nt]} \overline{F}_n(y X_{n - [nt]:n}) - t^{-1} \overline{F}(y F^{\leftarrow}(1-t)) \Big|  \asconv 0, \label{eq:quantile:empirical:conv} \\
	\mbox{and }\qquad& \sup_{y \ge 1} \Big| \exp \{- \widehat{\alpha}_{n,[nt]} \log y\} - \exp \Big\{-\frac{t}{  \int_0^t \log \frac{F^{\leftarrow}(1 - s)}{F^{\leftarrow}(1 - t)} \dtv s }  \log y \Big\} \Big| \asconv 0.  	
	\label{eq:functional:limit:hill}
	\end{align}
Indeed, by \eqref{eq:defn:Z_n} and \eqref{eq:defn:Zty}, $\sup_{y \ge 1} \big|Z_n(t,y) - Z(t,y) \big|$ is bounded from above by the sum of the two right hand sides in \eqref{eq:quantile:empirical:conv} and \eqref{eq:functional:limit:hill}. The rest of the proof is dedicated to the proofs of \eqref{eq:quantile:empirical:conv} and \eqref{eq:functional:limit:hill}.
\smallskip

\noindent {\it Proof of \eqref{eq:quantile:empirical:conv}}. By the triangle inequality, the convergence claims
	\begin{align}
	& \sup_{y \ge 1} \frac{n}{[nt]} \Big|  \overline{F}_n(y X_{n - [nt]:n}) -  \overline{F}_n \big( y F^{\leftarrow}(1-t) \big) \Big| \asconv 0, \label{eq:quantile:empirical:conv:one} \\
	\mbox{and }\qquad & \sup_{y \ge 1} \Big| \frac{n}{[nt]} \overline{F}_n(y F^{\leftarrow}(1-t)) - t^{-1} \overline{F}(y F^{\leftarrow}(1-t)) \Big| \asconv 0. \label{eq:quantile:empirical:conv:two}
	\end{align}
yield \eqref{eq:quantile:empirical:conv}. These two convergence claims will be obtained separately.
\smallskip

We start with \eqref{eq:quantile:empirical:conv:one}. We can ignore the term $n/[nt]$ as it is bounded for all $t \in [\varepsilon,1]$. Fix $\delta \in (0, F^{\leftarrow}(1 - \varepsilon))$. The continuity of $F^{\leftarrow}$ is assumed and so we can use the Glivenko-Cantelli theorem for the quantile process  (see \cite[(1.4.7)]{csorgho:1983}) to conclude that $X_{n- [nt]:n} \asconv F^{\leftarrow}(1 - t)$. Therefore, there exists an event $B$ satisfying $\prob(B) = 1$, such that $|X_{n - [nt]:n} - F^{\leftarrow}(1 - t)| < \delta$ for all $n \ge N_1$ on the event $B$, where $N_1$ is an appropriately chosen large integer. On the event $B$ and for $n \ge N_1$,
	\begin{align}
	& \Big| \overline{F}_n \big( y X_{n - [nt]:n} \big) - \overline{F}_n \big( y F^{\leftarrow}( 1- t) \big) \Big| \nonumber \\
	& = \max \Big( \overline{F}_n(y X_{n - [nt] : n}) - \overline{F}_n(y F^{\leftarrow}(1 - t)), \overline{F}_n(y F^{\leftarrow}( 1- t)) - \overline{F}_n \big( y X_{n - [nt]: n} \big) \Big) \nonumber \\
	& \le \overline{F}_n(y F^{\leftarrow}( 1- t) - y\delta)  - \overline{F}_n(y F^{\leftarrow}(1- t) + y \delta) \nonumber \\
	& \le \Big| \overline{F}_n(y F^{\leftarrow}( 1- t) - y\delta) - \overline{F}(y F^{\leftarrow}( 1- t) - y \delta) \Big| \nonumber \\
	& \hspace{1cm}  + \Big| \overline{F}_n(y F^{\leftarrow}( 1- t) + y\delta) - \overline{F}(y F^{\leftarrow}( 1- t) + y \delta) \Big|  \nonumber \\
	& \hspace{2cm} +  \Big( \overline{F}\big( y F^{\leftarrow}( 1 - t) - y \delta \big) - \overline{F}(y F^{\leftarrow}( 1-t) + y \delta) \Big) \nonumber \\
	& = {\rm I}_{n, \delta}^{\sss(1)} + {\rm I}_{n, \delta}^{\sss(2)} + {\rm I}_\delta^{\sss(3)}. \label{eq:decomposition:empirical:quantile:conv:one}
	\end{align}
We investigate each of these three terms, starting with ${\rm I}_\delta^{\sss(3)}$. Fix a large number ${\rm K} > 1$. Write
	\[
	{\rm I}_\delta^{\sss(3)}(y)=
	\overline{F}\big( y F^{\leftarrow}( 1 - t) - y \delta \big) - \overline{F}(y F^{\leftarrow}( 1-t) + y \delta).
	\]
Then,
	\[
	{\rm I}_\delta^{\sss(3)}=\sup_{y \ge 1} {\rm I}_\delta^{\sss(3)}(y)  \le \sup_{ 1 \le y \le {\rm K}} {\rm I}_\delta^{\sss(3)}(y) + \sup_{ y \ge {\rm K}} {\rm I}_\delta^{\sss(3)}(y).
	\]
It follows from uniform continuity of $y\mapsto {\rm I }_\delta^{\sss(3)}(y)$ that $\lim_{\delta \to 0} \sup_{1 \le y \le {\rm K}} {\rm I}_\delta^{\sss(3)}(y) = 0$ for every fixed ${\rm K} \ge 1$. Note that ${\rm I}_\delta^{\sss(3)}(y) \le 2 \overline{F}(y F^{\leftarrow}( 1- t) - y \delta)$ and so $\sup_{y \ge {\rm K}} {\rm I}_\delta^{\sss(3)}(y) \le \overline{F}({\rm K}(F^{\leftarrow}( 1 - t) - \delta))$. It is easy to see that
	\begin{align*}
	\lim_{{\rm K} \to \infty} \limsup_{\delta \to 0} \sup_{y \ge {\rm K}} {\rm I}_\delta^{\sss(3)}(y) \le \lim_{{\rm K} \to \infty} \lim_{\delta \to 0} \overline{F} \Big( {\rm K}( F^{\leftarrow}( 1- t) - \delta) \Big) = 0,
	\end{align*}
and therefore $\lim_{\delta \to 0} {\rm I}_\delta^{\sss(3)} = 0$.

By the Glivenko-Cantelli Theorem, $\sup_{y \ge 1} {\rm I}_{n,\delta}^{\sss(i)} \asconv 0$ for $i=1,2$ and every $\delta > 0$. Combining these facts and the decomposition derived in \eqref{eq:decomposition:empirical:quantile:conv:one}, we conclude that \eqref{eq:quantile:empirical:conv:one} holds. The proof of \eqref{eq:quantile:empirical:conv:two} again follows from the Glivenko-Cantelli Theorem and the fact $n/[nt] \to 1/t$ as $n \to \infty$. This completes the proof of \eqref{eq:quantile:empirical:conv}.\\

\noindent{\it Proof of \eqref{eq:functional:limit:hill}}. Note that the only $n$-dependence in \eqref{eq:functional:limit:hill} is in $\widehat{\alpha}_{n,[nt]}$. Further, $\widehat{\alpha}_{n, [nt]} = H_{n, [nt]}^{-1}$. We first complete the proof of \eqref{eq:functional:limit:hill} subject to Lemma \ref{lemma:aslimit:hill}, and then complete the proof by proving Lemma \ref{lemma:aslimit:hill}. By Lemma \ref{lemma:aslimit:hill}, \eqref{eq:functional:limit:hill} reduces to
	\begin{align}
	\sup_{y \ge 1} |y^{- \widehat{\alpha}_{n, [nt]}} - y^{- H_t^{-1}}| \asconv 0.  \label{eq:reduced:functional:limit:hill}
	\end{align}
By the intermediate value theorem, for some $\beta_{n,y}$ in between $1/H_{n, [nt]}$ and $1/H_t$,
	\begin{align}
 	& \sup_{y \ge 1} \big| y^{- H_{n, [nt]}^{-1} } - y^{-H_t^{-1}} \big| \leq |H_{n, [nt]}^{-1}-H_t^{-1}|
	\sup_{y \ge 1} (\log{y})y^{-\beta_{n,y}}.
	\end{align}
Since $H_{n,[nt]} \asconv H_t>0$, we have that $\sup_{y \ge 1} (\log{y}) y^{-\beta_{n,y}}$ is uniformly bounded for all $n$ such that $1/H_{n, [nt]}\geq 1/[2H_t]$, while $|H_{n, [nt]}^{-1}-H_t^{-1}|\asconv 0$. This completes the proof of \eqref{eq:functional:limit:hill}.
\qed
\smallskip
We complete the proof of Proposition~\ref{propn:dnk:functional:limit} by proving Lemma~\ref{lemma:aslimit:hill}:

\begin{proof}[Proof of Lemma~\ref{lemma:aslimit:hill}]  Let $(\unif_i)_{i \ge 1}$ denote a collection of independent ${\rm Uniform}(0,1)$ random variables. The order statistics of the first $n$ elements of $(\unif_i)_{i \ge 1}$ are denoted by $\unif_{1:n} < \unif_{2:n} < \cdots < \unif_{n:n}$. We denote the empirical distribution of $(\unif_i)_{1 \le i \le n}$ by $F_n^{\sss (\unif)}$. We start by observing that
	\begin{align}
	H_{n, [nt]} \eqd \frac{1}{[nt]} \sum_{i=1}^{[nt]} \Big[\log F^{\leftarrow}(1 - \unif_{i:n}) - \log F^{\leftarrow}( 1 - \unif_{[nt]:n})\Big].
	\label{eq:hill:uniform:representation}
	\end{align}
In view of \eqref{eq:hill:uniform:representation}, the proof of Lemma~\ref{lemma:aslimit:hill} reduces to proving
	\begin{align}
	& \log F^{\leftarrow} \big( 1 - \unif_{[nt]:n} \big) \asconv \log F^{\leftarrow}( 1- t),  \label{eq:aslimit:hill:termone} \\
	\mbox{ and } \qquad&\frac{nt}{[nt]} \frac{1}{t} \Big( \frac{1}{n} \sum_{i =1}^{[nt]} \log F^{\leftarrow}( 1- \unif_{i:n}) \Big) \asconv \frac{1}{t} \int_0^t \log F^{\leftarrow}( 1- s) \dtv s. \label{eq:aslimit:hill:termtwo}
	\end{align}
Note that $\unif_{[nt]:n} \asconv t$ and by the continuous mapping theorem, we get \eqref{eq:aslimit:hill:termone} as $F^{\leftarrow}$ is assumed to be continuous. Therefore, the rest of the proof is dedicated to the proof of \eqref{eq:aslimit:hill:termtwo}.

We first note that $nt/[nt] \to 1$. Therefore, \eqref{eq:aslimit:hill:termtwo} reduces to proving
	\begin{align}
	\frac{1}{n} \sum_{i=1}^{[nt]} \log F^{\leftarrow}( 1 - \unif_{i : n}) \asconv \int_0^t \log F^{\leftarrow}(1 - s) \dtv s. \label{eq:aslimit:hill:termtwo:reduced}
	\end{align}
We start by decomposing the difference of the left and right hand sides of \eqref{eq:aslimit:hill:termtwo:reduced} as
	\begin{align}
	& \Big| \frac{1}{n} \sum_{i=1}^{[nt]} \log F^{\leftarrow}( 1 - \unif_{i : n}) - \int_0^t \log F^{\leftarrow}(1 - s) \dtv s \Big| \nn \\
	&  \le \Big| \frac{1}{n}\sum_{i=1}^{[nt]} \log F^{\leftarrow}( 1 - \unif_{i:n}) - \frac{1}{n} \sum_{i=1}^n \mbbo(\unif_i \le t) \log F^{\leftarrow}( 1- \unif_i) \Big| \nn \\
	& \hspace{1cm} + \Big| \frac{1}{n} \sum_{i=1}^n \mbbo(\unif_i \le t) \log F^{\leftarrow}(1 - \unif_i) - \int_0^t \log F^{\leftarrow}(1 - s) \dtv s  \Big|\nn\\
	& =: \Iterm{1} + \Iterm{2}. \label{eq:aslimit:hill:decomposition}
\end{align}
Our task is now reduced to showing that
	\begin{align}
	\Iterm{1} \asconv 0 \qquad \mbox{ and }\qquad \Iterm{2} \asconv 0.
	\end{align}

\noindent {\it Proof that $\Iterm{1} \asconv 0$}. We know that $\unif_{n, [nt]} \asconv t$ as $n \to \infty$. Fix $\vep \in (0, t/2)$. Then, there exists an event $B$ satisfying $\prob(B) = 1$ and a large (random) integer $N$ such that $|\unif_{n, [nt]} - t| < \vep$ on the event $B$ for all $n \ge N$. Therefore, for all $n \ge N$, on the event $B$,
	\begin{align}
	\Iterm{1} & \le \frac{1}{n} \sum_{i =1}^n \Big( \mbbo(t \le \unif_i \le \unif_{[nt]:n}) + \mbbo(\unif_{[nt]:n} \le \unif_i \le t) \Big) \log F^{\leftarrow}( 1- \unif_i) \nn \\
	& \le \Big( \log F^{\leftarrow}( 1- t+ \vep) \Big) \Big( \frac{1}{n} \sum_{i =1}^n \mbbo( t - \vep \le \unif_i \le t + \vep) \Big) \nn\\
	& = \log F^{\leftarrow}( 1- t + \vep) \Big( F_n^{\sss (\unif)}(t + \vep) - F_n^{\sss (\unif)}(t - \vep) \Big) \nn\\
	& \asconv 2 \vep \Big( \log F^{\leftarrow}( 1- t + \vep) \Big), \label{eq:upper:bound:aslimit:hill}
	\end{align}
by the Glivenko-Cantelli Theorem for the empirical process $F_n^{\sss (\unif)}$. Finally, we can see that the limit obtained in the right hand side of \eqref{eq:upper:bound:aslimit:hill} converges to zero if we let $\vep \to 0$. This completes the proof of the fact that $\Iterm{1} \asconv 0$.\\

\noindent{\it Proof of $\Iterm{2} \asconv 0$}. We first observe that
	\begin{align*}
	\frac{1}{n} \sum_{i =1}^n \mbbo(\unif_i \le t) \log F^{\leftarrow}(1 - \unif_i) = \int_0^t \log F^{\leftarrow}( 1- u) F_n^{\sss (\unif)}(\dtv u).
	\end{align*}
Therefore, the strong law of large numbers implies that $\Iterm{2} \asconv 0$ when we can show that
	\begin{align}
	\exptn \Big| \mbbo(\unif \le t) \log F^{\leftarrow}( 1- \unif) \Big| < \infty. \label{eq:finite:expectation}
	\end{align}
Our next task will be to verify \eqref{eq:finite:expectation}. Note that $F^{\leftarrow}(1 - x^{-1}) \in {\rm RV}_{1/\alpha}$ at infinity (see \cite[Proposition~0.8(iv)]{resnick:1986}). This means that $F^{\leftarrow}( 1 - x) \in {\rm RV}_{-1/\alpha}$  at $0$, that is, $F^{\leftarrow}(1 - x) = x^{- 1/\alpha} L_*(x)$ for all $x \in (0,1]$, where $L_*$ is a slowly varying function. Fix $\rho \in (0, 1/\alpha)$. Then we can choose $x_1$ small enough  such that $L_*(x) \le x^{-\rho} $ for all $x \le x_1$ (see \cite[Proposition~0.8(ii)]{resnick:1986}). Using these bounds, we obtain
	\begin{align*}
	& \exptn\Big| \mbbo(\unif \le t) \log F^{\leftarrow}(1 - \unif) \Big| \nn \\
	& =  \int_0^t |\log F^{\leftarrow}(1 - x)| \dtv x \nn \\
	& \le \int_{x_1}^t |\log (x^{1/\alpha} L_*(x)) \dtv x| + (\alpha^{-1} + \rho) (x_1 \log x_1^{-1} + x_1) < \infty.
	\end{align*}
Hence the proof of Lemma~\ref{lemma:aslimit:hill} is complete.
\end{proof}

\subsection{Proof of Proposition~\ref{prop:tightness:suprema}}

In this section, we prove Proposition~\ref{prop:tightness:suprema}. To explain our proof strategy, we begin by making some first estimates, using the triangle inequality.
Recall that $ t \in [\varepsilon, 1]$. Choose $\delta < \varepsilon/3$. We start the analysis with the observation that
	\begin{align}
	Z_n(t,y)= \max \Big[ \sup_{ 1\le y \le X_{n:n}/X_{n- [nt]:n}} \Big| \widehat{\nu}_{n, [nt]}(y, \infty) - y^{- \widehat{\alpha}_{n, [nt]}} \Big|, \Big(\frac{X_{n:n}}{X_{n - [nt]:n}} \Big)^{- \widehat{\alpha}_{n, [nt]}} \Big].
	\end{align}
Recall that $\overline{Z}_n(t)=\sup_{y\geq 1}|Z_n(t,y)|$. It is straightforward to see that there exists at least one $y_0 \in [1, \infty)$ such that $\overline{Z}_n(t) = Z_n(t, y_0)$ and $y_h \in [1, \infty)$ such that $\overline{Z}_n(t+h) = Z_n(y+h, y_h)$ for every $h \in (- \delta, \delta)$. Note that, almost surely,
	\begin{align}
	& |\overline{Z}_n(t) - \overline{Z}_n(t + h)| \nn\\
	&= \max \Big( Z_n(t, y_0) - Z_n(t+h, y_h), Z_n(t+h, y_h) - Z_n(t, y_0) \Big) \nonumber \\
	& \le  \max \Big( Z_n(t,y_0) - Z_{n}(t, y_0), Z_n(t+h, y_h) - Z_n(t, y_h) \Big) \nn\\
	& \le \sup_{y \ge 1} |Z_n(t,y) - Z_n(t+h, y)| \nn\\
	& \le \sup_{y \ge 1} |\widehat{\nu}_{n, [nt]}(y, \infty) - \widehat{\nu}_{n, [n(t+h)]}(y, \infty)| + \sup_{y \ge 1} |y^{- \widehat{\alpha}_{n, [nt]}} - y^{- \widehat{\alpha}_{n, [n(t + h)]}}| \nn \\
	& =: \Iterm{1} + \Iterm{2}.
	\end{align}
Furthermore, note that, almost surely,
	\begin{align}
	\mathrm{I}_n^{\sss(2)} & \le \sup_{ |h| < \delta} \sup_{y \ge 1} \big| y^{- \widehat{\alpha}_{n, [n(t + h)]}} - y^{- H_{t + h}^{-1}} \big|
	+ \sup_{y \ge 1} \big| y^{- \widehat{\alpha}_{n , [nt]}} - y^{- H_t^{- 1}} \big| \nonumber \\
	& \hspace{1cm} + \sup_{|h| < \delta } \sup_{y \ge 1} \big| y^{- H_t^{-1}} - y^{- H_{t + h}^{-1}} \big| \nonumber \\
	& = : \mathrm{I}_n^{\sss(21)} + \mathrm{I}_n^{\sss(22)} + \mathrm{I}^{\sss(23)}.
	\end{align}

Our proof strategy consists of developing suitable upper bounds for each of the above four terms. To bound  $\mathrm{I}_n^{\sss(22)}$, we apply Lemma \ref{lemma:aslimit:hill}. To handle
the terms  $\mathrm{I}_n^{\sss(1)}$  and $\mathrm{I}_n^{\sss(21)}$, we state two auxiliary results below.
In particular, the term $\mathrm{I}_n^{\sss(1)}$ is handled by the following lemma, which shows that the process $(F_n(y X_{n - [nt]:n}))_{t \in [\varepsilon,1], y \ge 1}$ is tight in the $J_1$-topology in ${\bf D}[\varepsilon,1] \times [1, \infty)$.

\begin{lemma}[Tightness of empirical process evaluated at order statistic] \label{lemma:In11:tightness}
Under the assumptions stated in Theorem~\ref{thm-sublinear}, there exists a large number $N_1$ such that  for all $n \ge N_1$, $t \in [\varepsilon,1]$ $\delta \in (0, \varepsilon/3)$, almost surely,
	\begin{align}
	\label{equation:lemma:In11:tightness}
	\sup_{|h| < \delta} \sup_{y \ge 1} \Big| F_n \big( y X_{n -[nt]:n} \big) - F_n \big( y X_{n - [n(t + h)] : n} \big) \Big| \le 26 \delta.
	\end{align}
\end{lemma}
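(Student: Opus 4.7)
The plan is to reduce, via the classical and quantile Glivenko--Cantelli theorems, the bound on $F_n$ to a bound on the population distribution $F$, and then to exploit the regular variation of $\overline{F}$ (Potter's theorem) together with continuity of $F^{\leftarrow}$ to control the resulting difference. By symmetry of the inner supremum over $|h| < \delta$, it suffices to consider $h \in [0, \delta)$, in which case $[n(t+h)] \geq [nt]$ forces $X_{n - [n(t+h)]:n} \leq X_{n - [nt]:n}$; monotonicity of $F_n$ then removes the absolute value and leaves a non-negative expression. The classical Glivenko--Cantelli theorem $\sup_x |F_n(x) - F(x)| \asconv 0$ lets me replace $F_n$ by $F$ at the cost of an additive $2\delta$ error for $n$ large enough almost surely, reducing the task to bounding $\sup_{y \geq 1}\bigl[\overline{F}(y X_{n - [n(t+h)]:n}) - \overline{F}(y X_{n - [nt]:n})\bigr]$.

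Next I would apply the quantile Glivenko--Cantelli theorem \cite[(1.4.7)]{csorgho:1983}, which gives $\sup_{t \in [\varepsilon,1]} |X_{n - [nt]:n} - F^{\leftarrow}(1 - t)| \asconv 0$, together with continuity of $F^{\leftarrow}$ on $[0, 1 - \varepsilon/3]$, to show that the ratio $R := X_{n - [nt]:n}/X_{n - [n(t+h)]:n}$ is close to $1$ almost surely for $n$ large, uniformly in $t \in [\varepsilon, 1]$ and $|h| < \delta$. Because $y \geq 1$ and $F^{\leftarrow}(0) \geq 1$, both arguments $y X_{n - [nt]:n}$ and $y X_{n - [n(t+h)]:n}$ are bounded below by $1$, and their ratio equals $R$. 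For $y$ large enough that these points lie in the tail, Potter's theorem applied to $\overline{F} \in {\rm RV}_{-\alpha}$ gives $\overline{F}(y X_{n - [n(t+h)]:n})/\overline{F}(y X_{n - [nt]:n}) \leq (1 + \eta) R^{\alpha + \eta}$, which with $R$ close to $1$ and $\overline{F} \leq 1$ yields a bound of order $\delta$. For $y$ in a bounded range, uniform continuity of $F$ on a compact set, combined with smallness of $|X_{n - [nt]:n} - X_{n - [n(t+h)]:n}|$ from the preceding step, handles the difference.

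The hard part will be obtaining a linear-in-$\delta$ bound (rather than a mere $f(\delta)$ with $f(\delta) \to 0$) uniformly in $t \in [\varepsilon, 1]$. This requires quantitative Potter-type estimates on $\overline{F}$ and exploiting that the regularly varying structure of the map $s \mapsto F^{\leftarrow}(1 - s)$ near $0$, combined with $t \geq \varepsilon$, yields a modulus of continuity of $F^{\leftarrow}$ on $[0, 1 - \varepsilon/3]$ of linear order in the increment. A careful accounting of the various constants arising from the Glivenko--Cantelli replacement, the tail Potter estimate, and the bounded-$y$ continuity estimate then delivers the explicit factor $26$ quoted in the statement.
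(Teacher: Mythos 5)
Your approach shares the same basic skeleton as the paper's argument (classical and quantile Glivenko--Cantelli to replace $F_n$ by $F$ and order statistics by quantiles, followed by a tail-truncation on $y$ plus a uniform-continuity estimate on a compact $y$-range), but the final step contains a genuine gap. You claim that the regular variation of $s \mapsto F^{\leftarrow}(1-s)$ near $0$ ``yields a modulus of continuity of $F^{\leftarrow}$ on $[0, 1-\varepsilon/3]$ of linear order in the increment.'' This is false: regular variation of $\overline{F}$ at infinity is an asymptotic condition and constrains $F^{\leftarrow}$ only near the upper endpoint $u=1$; it says nothing about the local behavior of $F^{\leftarrow}$ at points $u$ with $1-u \geq \varepsilon/3$, which is the range you are working in. Under the stated hypotheses ($\overline{F}$ and $F^{\leftarrow}$ merely continuous), one can construct $F$ which is pure Pareto above some $x_0$ yet has a vertical tangent of $F^{\leftarrow}$ somewhere in $(1, x_0)$ (a flat spot of $F'$), destroying any Lipschitz bound on $F^{\leftarrow}$. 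So the ``linear modulus of $F^{\leftarrow}$'' you invoke is unavailable, and the Potter-estimate route you propose, which bounds $\overline{F}(y X_{n-[n(t+h)]:n}) - \overline{F}(y X_{n-[nt]:n})$ in terms of the ratio $R$ and hence in terms of $|F^{\leftarrow}(1-t) - F^{\leftarrow}(1-t-h)|$, produces a bound of size $g(\delta)$, where $g$ is the actual modulus of continuity of $F^{\leftarrow}$, not a bound of size $C\delta$.

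The resolution, and what the paper actually does, is to observe that the linear-in-$\delta$ form of the conclusion is not a Lipschitz statement about any underlying process and does not require any quantitative control of $F^{\leftarrow}$. The $26\delta$ bound is achieved by bookkeeping: the paper splits the quantity into several terms, and \emph{each} term is pushed below $\delta$ by a suitable choice of auxiliary constants. Crucially, the order-statistic error $\delta_1$ is a \emph{free} parameter that can be taken as small as one likes (it only costs a larger $N_1$, which the lemma permits to depend on $\delta$ in an uncontrolled way); uniform continuity of $F$ (not of $F^{\leftarrow}$) gives an $\eta$ for the level $\delta$, and one then chooses $\delta_1 < \eta/C_\delta$. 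Potter's theorem is not used at all; the tail of $\overline{F}$ is simply truncated by choosing $C_\delta$ with $\overline{F}(C_\delta F^{\leftarrow}(0)) < \delta/2$. I suggest you abandon the ratio/Potter route in favor of this additive decomposition and absolute-value modulus of $F$, which avoids ever needing a modulus for $F^{\leftarrow}$ better than qualitative uniform continuity.

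Two smaller remarks. First, your classical-GC replacement picks up $2\delta$ uniformly, which is fine, but you must also account for the passage from $X_{n-[nt]:n}$ to $F^{\leftarrow}(1-t)$ \emph{inside} the argument of $\overline{F}$, which is where another tail truncation and another application of uniform continuity of $F$ are needed; the paper tracks this through two symmetric terms ${\rm I}^{(113)}$, ${\rm I}^{(114)}$. Second, the explicit constant $26$ in the statement has no significance; what Proposition~\ref{prop:tightness:suprema} actually needs is only a bound of the form $f(\delta)$ with $f(\delta) \to 0$, and your instinct that a ``mere $f(\delta) \to 0$'' would be conceptually weaker is not matched by the application.
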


To estimate the term $\mathrm{I}_n^{\sss(21)}$, we use the following functional limit theorem for the inverse of Hill's estimator $(H_{n, [nt]})_{t \in [\varepsilon,1]}$, which may be of independent interest:

\begin{propn}[Functional limit theorem for inverse Hill estimator] \label{propn:flt:hills:estimator}
If $\overline{F}$ and $F^{\leftarrow}$ is continuous, for every $\vep > 0$,
	\begin{align}
	(H_{n, [nt]})_{t \in [\vep, 1]} \asconv \Big( t^{-1} \int_0^t \log \frac{F^{\leftarrow}(1- s)}{F^{\leftarrow}(1 - t)} \dtv s\Big)_{t \in [\vep,1]}.
	\end{align}
\end{propn}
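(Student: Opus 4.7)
The plan is to lift the pointwise almost sure convergence established in Lemma~\ref{lemma:aslimit:hill} to uniform almost sure convergence on $[\varepsilon,1]$, via a Glivenko--Cantelli type argument combined with continuity of the quantile function.

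Using R\'enyi's representation as in \eqref{eq:hill:uniform:representation}, we may (on a suitable probability space) write
$$H_{n,[nt]} = A_n(t) - B_n(t),$$
where, setting $g(u):=\log F^{\leftarrow}(1-u)$,
$$A_n(t) := \frac{1}{[nt]}\sum_{i=1}^{[nt]} g(\unif_{i:n}), \qquad B_n(t) := g(\unif_{[nt]:n}),$$
and $(\unif_i)_{i\geq 1}$ are i.i.d.\ ${\rm Uniform}(0,1)$ random variables. The Glivenko--Cantelli theorem together with the standard uniform quantile bound (see \cite[(1.4.7)]{csorgho:1983}) gives $\sup_{t\in[\varepsilon,1]}|\unif_{[nt]:n}-t|\asconv 0$, so that almost surely, for $n$ large enough, $\unif_{[nt]:n}\in[\varepsilon/2,1]$ uniformly in $t\in[\varepsilon,1]$.

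First I would handle $B_n$. Since $F^{\leftarrow}$ is continuous, $g$ is continuous on $(0,1]$ and hence uniformly continuous on $[\varepsilon/2,1]$; combined with the above uniform quantile convergence, this yields $\sup_{t\in[\varepsilon,1]}|B_n(t)-g(t)|\asconv 0$. For $A_n$, rewrite
$$\frac{1}{n}\sum_{i=1}^{[nt]} g(\unif_{i:n}) = \int_{(0,\unif_{[nt]:n}]} g(u)\, F_n^{\sss(\unif)}(\dtv u)=:T_n(\unif_{[nt]:n}),$$
where $F_n^{\sss(\unif)}$ is the empirical distribution of $(\unif_i)_{i=1}^n$, and set $T(u):=\int_0^u g(s)\dtv s$. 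The latter is finite on $(0,1]$ by the integrability computation at the end of the proof of Lemma~\ref{lemma:aslimit:hill}, and Lipschitz on $[\varepsilon/2,1]$ because $g$ is bounded there. The crucial step is
$$\sup_{u\in[\varepsilon/2,1]}|T_n(u)-T(u)|\asconv 0,$$
which is the uniform Glivenko--Cantelli statement for the class $\{g\,\mbbo_{(0,u]}: u\in[\varepsilon/2,1]\}$; since this class has a bounded envelope (as $g$ is bounded on $[\varepsilon/2,1]$) and the indicators are half-lines indexed monotonically by $u$, it is a standard Glivenko--Cantelli class. Composing with $\sup_t|\unif_{[nt]:n}-t|\asconv 0$ and uniform continuity of $T$ on $[\varepsilon/2,1]$, and noting that $n/[nt]\to 1/t$ uniformly on $[\varepsilon,1]$, we deduce $\sup_{t\in[\varepsilon,1]}|A_n(t)-t^{-1}T(t)|\asconv 0$.

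Subtracting the two uniform limits gives the desired conclusion
$$\sup_{t\in[\varepsilon,1]}\Big|H_{n,[nt]} - t^{-1}\int_0^t\log\frac{F^{\leftarrow}(1-s)}{F^{\leftarrow}(1-t)}\dtv s\Big| \asconv 0.$$
The main obstacle is the uniform Glivenko--Cantelli statement for $T_n$; the subtlety is that $g$ may blow up as $u\downarrow 0$, but this is bypassed because the uniform quantile convergence confines $\unif_{[nt]:n}$ to a compact subset of $(0,1]$ on which $g$ is bounded, so all Glivenko--Cantelli arguments are carried out with a finite envelope.
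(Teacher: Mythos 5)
Your decomposition of $H_{n,[nt]}$ into $A_n(t)-B_n(t)$ and your treatment of $B_n$ match the paper's proof exactly. The gap is in the Glivenko--Cantelli step for $T_n$: the class $\{g\,\mbbo_{(0,u]}\colon u\in[\varepsilon/2,1]\}$ does \emph{not} have a bounded envelope. Restricting the index $u$ to $[\varepsilon/2,1]$ confines only the upper endpoint of integration; every function $g\,\mbbo_{(0,u]}$ with $u\ge\varepsilon/2$ still has support containing $(0,\varepsilon/2)$, where $g(x)=\log F^{\leftarrow}(1-x)$ blows up like $\alpha^{-1}\log(1/x)$. The envelope is therefore $\sup_{u\ge\varepsilon/2} g\,\mbbo_{(0,u]}=g$ itself, which is unbounded. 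In other words, the uniform quantile bound controls where the random index $\unif_{[nt]:n}$ lands, but not where the sample points $\unif_i$ inside the sum land, so the ``subtlety'' you flag at the end is not actually bypassed by this observation.

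The conclusion you want is still true, but it needs a different justification. Since $g\ge 0$, $u\mapsto T_n(u)=\int_{(0,u]}g\,\dtv F_n^{\sss(\unif)}$ and $u\mapsto T(u)=\int_0^u g\,\dtv s$ are both nondecreasing, $T$ is continuous on $(0,1]$, and $T_n(u)\asconv T(u)$ pointwise by the strong law once you have $\int_0^1 g<\infty$ (which you, like the paper, verify via Potter's bound). A P\'olya-type sandwich over a finite grid, or equivalently the bracketing Glivenko--Cantelli theorem with brackets $[g\,\mbbo_{(0,u_{i-1}]},\,g\,\mbbo_{(0,u_i]}]$ of $L^1$-size $T(u_i)-T(u_{i-1})$, then yields $\sup_{u\in[\varepsilon/2,1]}|T_n(u)-T(u)|\asconv 0$. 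This is precisely the content of the paper's bracketing argument for the class $\mathcal F_\varepsilon$ (via \cite[Theorem~19.4]{vandervaart:2000}); your route relocates the normalization $1/t$ outside the class and indexes by the integration endpoint instead of $t$ directly, which is a mild variant, but the integrable-envelope-plus-bracketing step is the actual crux and cannot be dismissed as a bounded-envelope triviality.
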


Before we prove these results, we apply them to prove Proposition~\ref{prop:tightness:suprema}:\\

\noindent {\bf Step 1: upper bound for $\Iterm{1}$}. Recall that $\widehat{\nu}_{n, [nt]}(y, \infty) = n \overline{F}_n(y X_{n- [nt]:n})$, and write
	\begin{align}
	\sup_{|h| < \delta} \Iterm{1} & = \sup_{|h| < \delta} \sup_{y \ge 1}\Big| \frac{n}{[n(t + h)]} \overline{F}_n \big( y X_{n - [n(t + h)]: n} \big) - \frac{n}{[nt]} \overline{F}_n(y X_{n- [nt]:n}) \Big| \nn \\
	& \le \frac{1}{t - \delta}  \sup_{ |h|< \delta} \sup_{y \ge 1} \frac{n}{[n(t+ h)]}   \Big| F_n \big( y X_{n -[nt]:n} \big) - F_n \big( y X_{n - [n(t + h)] : n} \big) \Big| \nn \\
 	&  \hspace{2cm} + \sup_{|h| < \delta} \big| \frac{n}{[nt]} - \frac{n}{[n(t + h)]} \big| \nn\\
	& =: {\rm I}_n^{\sss(11)} + {\rm I}_n^{\sss(12)}, \label{eq:decomposition:Inone}
	\end{align}
almost surely. Using basic algebra, we obtain that
	\begin{align}
	{\rm I}_n^{\sss(12)} \le \frac{2\delta}{t(t - 2 \delta)} \le 6 \delta \varepsilon^{-2}, \label{eq:upper:bound:IN:12}
	\end{align}
for all $n \ge N_2$ if $N _2$ is chosen large enough.

Combining Lemma~\ref{lemma:In11:tightness} and \eqref{eq:upper:bound:IN:12}, for all $n \ge \max(N_1, N_2)$,
	\begin{align}
	\Iterm{1} \le 26 \delta +  6 \delta \varepsilon^{-2}.  \label{eq:bound:tightness:Inone}
	\end{align}

\noindent {\bf Step 2: upper bound for ${\rm I}^{\sss(23)}$}. 
It is clear that
	\begin{align}
	H_{t + h}^{-1} - H_t^{-1}  & = (t +h) \Big[ \int_0^{t + h} \log \frac{F^{\leftarrow}(1 - s)}{F^{\leftarrow}( 1 - t - h)} \dtv s \Big]^{-1}
	- t \Big[\int_0^t \log \frac{F^{\leftarrow}( 1 - s)}{F^{\leftarrow}( 1 - t)} \dtv s \Big]^{-1}\nonumber \\
	& = {\rm C}_1(t,h) \Big( - t \int_t^{t + h} \log \frac{F^{\leftarrow}(1 - s)}{F^{\leftarrow}(1 - t - h)} \dtv s  + h \int_0^t \log \frac{F^{\leftarrow}(1 - s)}{F^{\leftarrow}(1 - t)} \dtv s  \nonumber \\
	&  \hspace{2cm} + t^2 \log \frac{F^{\leftarrow}( 1 - t - h)}{F^{\leftarrow}(1 - t)} \Big), \label{eq:difference:positive:H}
	\end{align}
where
	\begin{align}
	{\rm C}_1(t, h) = \Big( \int_0^t \log \frac{ F^{\leftarrow}( 1- s)}{F^{\leftarrow}(1 - t)} \dtv s \Big)^{-1} \Big( \int_0^{t + h} \log \frac{F^{\leftarrow}( 1 - s)}{ F^{\leftarrow}(1-t - h)} \dtv s \Big)^{-1} > 0.
	\end{align}
Similarly, one can derive that
	\begin{align}
	H_{t - h}^{-1} - H_t^{-1} & = {\rm C}_1(t-h, h)  \Big[ t \int_{t - h}^t \log \frac{F^{\leftarrow}(1 - s)}{F^{\leftarrow}(1 - t)} \dtv s - h \int_0^t \log \frac{F^{\leftarrow}( 1 - s)}{F^{\leftarrow}( 1 - t )} \dtv s \nonumber \\
	& \hspace{2.5cm} + t^2 \log \frac{F^{\leftarrow}( 1- t + h)}{F^{\leftarrow}( 1 - t)}  \Big]. \label{eq:difference:negative:H}
	\end{align}
Since $t\geq \vep$, and when $|h|\leq \delta\leq \vep/3$,
	\begin{align}
	{\rm C}_1(t, h) \le \Big( \int_0^{\varepsilon/2} \log \frac{F^{\leftarrow}(1 - s)}{F^{\leftarrow}(1 - \varepsilon/2)} \dtv s \Big)^{-2} =:{\rm C}_2(\varepsilon/2).
	\end{align}
Let $g(\delta)$ be the modulus of continuity of the uniformly continuous function $\log F^{\leftarrow}(1 -t) : [\varepsilon, 1] \to (1, \infty)$ and set $\norm{\log F^{\leftarrow}}_1 = \int_0^1 \log F^{\leftarrow}(1 - s) \dtv s$, where we use that $F(1)=0$ so that $F^{\leftarrow}(1 - s)>1$.

We  use the triangle inequality with \eqref{eq:difference:positive:H} and \eqref{eq:difference:negative:H} to see that
	\begin{align}
	\sup_{|h| < \delta}|H_t^{-1} - H_{t + h}^{-1}| \le 2 g(\delta) + \delta \norm{\log F^{\leftarrow}}_1.
	\end{align}
We proceed by noting that
	\begin{align}
	\mathrm{I}^{\sss(23)} & = \sup_{|h| < \delta} \sup_{y \ge 1} \max \Big( y^{- H_t^{-1}} - y^{- H_{t + h}^{-1}}, y^{-H_t^{-1}} - y^{- H_{t+ h}^{-1}} \Big) \nonumber \\
	& \le \sup_{y \ge 1} y^{- H_t^{-1} + 2 g(\delta) + \delta \norm{\log F^{\leftarrow}}_1} \big( 1 - y^{ - 2 g(\delta) - \delta \norm{\log F^{\leftarrow}}_1} \big) \nonumber \\
	& \le \sup_{ 1 \le y \le {\rm C}_3} y^{- H_t^{-1} + 2 g(\delta) + \delta \norm{\log F^{\leftarrow}}_1} \big( 1 - y^{-2 g(\delta) - \delta \norm{\log F^{\leftarrow}}_1} \big) \nonumber \\
	& \hspace{1cm} + \sup_{y \ge {\rm C}_3} y^{- H_t^{-1} + 2 g(\delta) + \delta \norm{\log F^{\leftarrow}}_1} \nonumber \\
	& \le \Big( 2 g(\delta) + \delta \norm{ \log F^{\leftarrow}}_1 \Big) \log {\rm C}_3 + C_3^{ - H_t^{-1} + 2 g(\delta) + \delta \norm{\log F^{\leftarrow}}_1} \nonumber \\
	& \le \delta  + \Big( 2 g(\delta) + \delta \norm{ \log F^{\leftarrow}}_1 \Big) \log {\rm C}_3, \label{eq:upper:bound:IN:23}
	\end{align}
if ${\rm C_3} > \exp\{(H_t^{-1} - 2 g(\delta) - \delta \norm{\log F^{\leftarrow}}_1) \log \delta^{-1}\}$ and $\delta$ is chosen small enough so that $H_t^{-1} - 2 g(\delta) - \delta \norm{\log F^{\leftarrow}}_1 > 0$.\\

\noindent {\bf Step 3: upper bound for ${\rm I}_n^{\sss(21)}$.}
 Define $\overline{H}_\varepsilon := \sup_{t \in [\varepsilon,1]} H_t > 0$. Note that $\overline{H}_\varepsilon$ is well defined as $t \mapsto H_t$ is a bounded continuous function when $t \in [\varepsilon, 1]$. Consider a positive integer ${\rm C}_4 > \delta^{ -  \overline{H}^{-1}_{\varepsilon/2}}$. Fix $0 < \delta_3 < \min(\overline{H}_{\varepsilon/ 2}, (\delta/ \log {\rm C}_4))$. We can use Proposition~\ref{propn:flt:hills:estimator}  to establish the existence of an integer $N^{\sss(3)}$ such that
	\begin{align}
	\sup_{|h| < \delta} \big| H^{-1}_{n, [n(t+ h)]} - H^{-1}_{t + h} \big| < \delta_3
	\end{align}
almost surely for all $n \ge N^{\sss(3)}$ as $H_{t + h} > 0$ for all $h \in [- \delta, \delta]$.
Applying this estimate twice, we obtain, almost surely for all $n \ge N^{\sss(3)}$,
\begin{align}
\mathrm{I}_n^{\sss(21)} & \le \sup_{|h| < \delta} \sup_{y \ge 1} \max \Big( y^{- \widehat{\alpha}_{n, [n(t+ h)]}} - y^{- H_{t +h}^{-1}}, y^{- H_{t + h}^{-1}} - y^{- \widehat{\alpha}_{n, [n(t+h)]}} \Big) \nonumber \\
& \le \sup_{|h| < \delta}\sup_{y \ge 1} y^{- H_{t + h}^{-1} + \delta_3} \big( 1 - y^{- \delta_3} \big) \nonumber \\
& \le \sup_{1 \le y \le {\rm C}_4}  \big( 1 - y^{- \delta_3} \big) + \sup_{|h| < \delta}\sup_{y \ge {\rm C}_4} y^{- H_{t + h}^{-1} + \delta_3}  \nonumber \\
& \le \delta_3 \log {\rm C}_4 + {\rm C}_4^{- \overline{H}_{\varepsilon/2}^{-1} + \delta_3} <2\delta. \label{eq:upper:bound:IN:21}
\end{align}

\noindent{\bf Final step: putting the pieces together.}
Lemma~\ref{lemma:aslimit:hill} implies that $\mathrm{I}_n^{\sss(22)} \asconv 0$. Therefore, there exists an integer $N^{\sss(1)}$ such that  $\mathrm{I}_n^{\sss(22)} < \delta$ almost surely for $n \ge N^{\sss(1)}$.
  Combining \eqref{eq:upper:bound:IN:21}, \eqref{eq:upper:bound:IN:23}  and $\mathrm{I}_n^{\sss(22)} < \delta$, for all $n \ge N^{\sss(1)} \vee N^{\sss(3)}$, we obtain that
\begin{align}
\mathrm{I}_n^{\sss(2)} \le 4 \delta + \big(2 g(\delta)  + \delta \norm{\log F^{\leftarrow}}_1 \big) \log {\rm C}_3 \mbox{ almost surely.}  \label{eq:upperbound:IN:2}
\end{align}
We conclude Proposition \ref{propn:flt:hills:estimator} from \eqref{eq:bound:tightness:Inone} and \eqref{eq:upperbound:IN:2}.
\qed

\smallskip

We conclude this section by proving the auxiliary results used in the proof of Proposition \ref{propn:flt:hills:estimator}, i.e., Lemma~\ref{lemma:In11:tightness} and Proposition~\ref{propn:flt:hills:estimator}:

\begin{proof}[Proof of Lemma~\ref{lemma:In11:tightness}]
Here we first decompose the l.h.s.\ of (\ref{equation:lemma:In11:tightness})
into two terms depending on whether $h$ is positive or negative, to obtain
	\begin{align}
	& \sup_{|h| < \delta} \sup_{y \ge 1} \Big| \overline{F}_n(y X_{n- [nt]:n}) - \overline{F}_n(y X_{n - [n(t + h)] : n}) \Big|  \nn\\
	& \le \frac{1}{t - \delta} \Big[ \sup_{0 < h < \delta} \sup_{y \ge 1}  \Big| \overline{F}_n(y X_{n- [nt]:n}) - \overline{F}_n(y X_{n - [n(t + h)] : n}) \Big| \nonumber \\
	& \hspace{1cm} + \sup_{0 < h < \delta} \sup_{y \ge 1} \Big| \overline{F}_n(y X_{n- [nt]:n}) - \overline{F}_n(y X_{n - [n(t + h)] : n}) \Big|  \Big] \nonumber \\
	& := \frac{1}{t - \delta} (\mathrm{I}_n^{\sss(1)} + \mathrm{I}_n^{\sss(2)}).
	\end{align}
We derive an upper bound for $\mathrm{I}_n^{\sss(1)}$; the upper bound for $\Iterm{2}$ follows along the same lines. We decompose $\Iterm{1}$ further as
 %
%
	\begin{align}
	\Iterm{1} & \le  \sup_{y \ge 1} \big| \overline{F}_n(y X_{n - [nt]:n}) - \overline{F}_n(y F^{\leftarrow}( 1 - t)) \big| \nn \\
	& \hspace{1cm} + \sup_{0 < h < \delta} \sup_{y \ge 1} \big| \overline{F}_n \big( y X_{n- [n(t + h)] : n} \big) - \overline{F}_n(y F^{\leftarrow}( 1- t - h)) \big|  \nn\\
	& \hspace{2cm} + \sup_{0 < h < 1} \sup_{y \ge 1} \big| \overline{F}_n(y F^{\leftarrow}(1 - t)) - \overline{F}_n(y F^{\leftarrow}( 1- t - h)) \big| \nn\\
	& =: {\rm I}_n^{\sss(11)} + {\rm I}_n^{\sss(12)} + {\rm I}_n^{\sss(13)}. \label{eq:first:decomposition:first:term}
	\end{align}
In the remainder of the proof, we deal with each term separately.
%
%

\noindent{\bf Step 1: term ${\rm I}_n^{\sss(11)}$}. We have assumed that $F^{\leftarrow}(0) \ge 1$ and so we can choose a large real number ${\rm C}_{\delta}$ such that
	\begin{align}
	\overline{F}({\rm C}_\delta F^{\leftarrow}(0)) < \delta/2. \label{eq:vanishing:tail}
	\end{align}
Since $F$ is uniformly continuous, for every $\delta>0$, there exists an $\eta$ such that
	\begin{align}
	|F(x) - F(y)|< \delta ~~~\mbox{ if } |x - y| < \eta.  \label{eq:uniform:continuity:distn:function}
	\end{align}
We choose $\delta_1$ such that $0 < \delta_1 < \eta/{\rm C}_\delta$.  Thus, there exists a large integer $N_{\delta_1}$ such that
	\begin{align}
	\sup_{0 < h < \delta} |X_{n - [n(t + h)] : n} - F^{\leftarrow}(1 - t - h)| < \delta_1 \label{eq:quantile:process:conv}
	\end{align}
for all $n \ge N_{\delta_1}$. Consequently, we obtain
	\begin{align*}
	& \overline{F}_n \big( y X_{n - [nt]: n}\big) - \overline{F}_n (y  F^{\leftarrow}(1 - t))  \le \overline{F}_n \big( y (F^\leftarrow(1 - t) - \delta_1) \big) - \overline{F}_n(y F^{\leftarrow}(1 - t))   \\
	& \mbox{ and } \overline{F}_n (y F^{\leftarrow}( 1- t) \big) - \overline{F}_n \big( y X_{n - [nt]: n} \big) \le \overline{F}_n(y F^{\leftarrow}(1 - t) \big)  - \overline{F}_n \big( y F^{\leftarrow}(1 - t) + y \delta_1 \big).
	\end{align*}
Combining these two inequalities, for $n \ge N_{\delta_1}$, we have following upper bound for $\mathrm{I}_n^{\sss(11)}$:
	\begin{align}
 	&  \sup_{y \ge 1} \max \Big( \overline{F}_n (y X_{n - [nt]:n}) - \overline{F}_n(y F^{\leftarrow}(1 - t)),  \overline{F}_n(y F^{\leftarrow}(1-t)) - \overline{F}_n(y X_{n - [nt]:n}) \Big) \nn \\
	& \le \sup_{y \ge 1} \max \Big(\overline{F}_n \big( y (F^\leftarrow(1 - t) - \delta_1) \big) - \overline{F}_n(y F^{\leftarrow}(1 - t)), \nn \\
	& \hspace{2cm}  \overline{F}_n(y F^{\leftarrow}(1 - t) \big)  - \overline{F}_n \big( y F^{\leftarrow}(1 - t) + y \delta_1 \big) \Big) \nn\\
	& \le \sup_{y \ge 1} \big| \overline{F}_n \big( y (F^\leftarrow(1 - t) - \delta_1) \big) - \overline{F}_n(y F^{\leftarrow}(1 - t)) - \overline{F}(y F^{\leftarrow}(1- t) - y \delta_1) \nonumber \\
	& \hspace{1cm}+ \overline{F}(y F^{\leftarrow}(1-t)) \big|  + \sup_{y \ge 1} \big| \overline{F}_n \big( y (F^\leftarrow(1 - t) + \delta_1) \big) - \overline{F}_n(y F^{\leftarrow}(1 - t)) \nonumber \\
	& \hspace{1cm} - \overline{F}(y F^{\leftarrow}(1- t) + y \delta_1) + \overline{F}(y F^{\leftarrow}(1-t)) \big| + \sup_{y \ge 1} \big| \overline{F}(y \finv(1 - t)) \nn\\
	&  \hspace{1.5cm} - \overline{F}( y \finv(1-t) - y \delta_1) \big| + \sup_{y \ge 1}  \big| \overline{F}(y \finv(1 - t)) \nn \\
	& \hspace{1.75cm} - \overline{F}(y \finv(1 - t) + y \delta_1) \big| =: {\rm I}_n^{\sss(111)} + {\rm I}_n^{\sss(112)} + {\rm I}^{\sss(113)} + {\rm I}^{\sss(114)}. \label{eq:decomposition:In11}
	\end{align}
Using the Glivenko-Cantelli Theorem for the empirical process, we see that there exists a large integer $N_1$ such that
	\begin{align}
	{\rm I}_n^{\sss(111)} + {\rm I}_n^{\sss(112)} \le \delta
	\end{align}
almost surely for all $n \ge N_1$. Note that ${\rm I}^{\sss(113)}$ and ${\rm I}^{\sss(114)}$ can be treated in the same way. We therefore only focus on the latter and note that 
	\begin{align}
	\mathrm{I}^{\sss(114)} & \le \sup_{1 \le y \le {\rm C}_\delta} \Big[ F \Big( y F^{\leftarrow}(1 - t) + y \delta_1 \Big) - F\Big( y F^{\leftarrow}(1 - t) \Big)  \Big] \nonumber \\
	& \hspace{.5cm} +  \sup_{y \ge {\rm C}_\delta} \Big[ \overline{F} \Big( y F^{\leftarrow}(1 - t) \Big) - \overline{F} \Big( y F^{\leftarrow}(1 - t) + y \delta_1 \Big) \Big] \nonumber \\
	& := \mathrm{I}^{\sss(1141)} + {\rm I}^{\sss(1142)}.
	\end{align}
We can see that $\mathrm{I}^{\sss(1141)} < \delta$ by the choice of $\delta_1$ and ${\rm C}_ \delta$, made in \eqref{eq:uniform:continuity:distn:function} and \eqref{eq:vanishing:tail} respectively. It is easy to see that ${\rm I}^{\sss(1142)} < 2 \overline{F}({\rm C}_\delta F^{\leftarrow}(1 - t)) < \delta$ due to the choice of ${\rm C}_\delta$ in \eqref{eq:vanishing:tail}.  Therefore,
	\begin{align}
	{\rm I}^{\sss(114)} \le 2\delta.
	\end{align}
Combining all these facts, we obtain that
	\begin{align}
	{\rm I}_n^{\sss(11)} < 5\delta \mbox{ almost surely for all } n \ge N_1. \label{eq:upperbound:IN:1111}
	\end{align}

\noindent{\bf Step 2: term ${\rm I}_n^{\sss(12)}$}.  The following almost sure inequalities hold due to the quantile process convergence stated in \eqref{eq:quantile:process:conv}:
	\begin{align*}
	& F_n \Big( y X_{n - [n(t + h)]:n}, y F^{\leftarrow}(1 - t - h) \Big) \le F_n \Big( y F^{\leftarrow}(1 - t - h) - y \delta_1, y F^{\leftarrow}(1 - t - h) \Big), \\
	& F_n \Big( y F^{\leftarrow}(1 - t - h), y X_{n - [n(t + h)]:n}  \Big) \le F_n \Big( y F^{\leftarrow}(1 - t - h), y F^{\leftarrow}(1 - t - h) + y \delta_1 \Big).
	\end{align*}
We use these inequalities to derive the almost sure upper bound for ${\rm I}_n^{\sss(12)}$
	\begin{align}
	\mathrm{I}_n^{\sss(12)} & \le \sup_{0 < h < \delta} \sup_{y \ge 1} \Big| F_n \Big( y F^{\leftarrow}( 1 -t - h) - y \delta_1, y F^{\leftarrow}(1 - t - h) \Big)  \nonumber \\
	& \hspace{.25cm} - F\big(y F^{\leftarrow}(1 - t - h) - y\delta_1\big) + F \big( y F^{\leftarrow}(1 - t - h) \big) \Big| \nonumber \\
	& \hspace{.5cm} + \sup_{0 < h < \delta} \sup_{y \ge 1} \Big[ F \Big( y F^{\leftarrow}(1 - t - h)  \Big)  - F \Big( y F^{\leftarrow}(1 - t - h) - y \delta_1 \Big)\Big] \nonumber \\
	& \hspace{.75cm} + \sup_{0 < h < \delta} \sup_{y \ge 1} \Big| F_n \Big( y F^{\leftarrow}(1 - t - h), y F^{\leftarrow}(1 - t - h) + y \delta_1 \Big) \nonumber \\
	& \hspace{1cm} - F \Big( y F^{\leftarrow}(1 - t - h) \Big) + F^{\leftarrow}\Big( y F^{\leftarrow}( 1 - t - h) + y \delta_1 \Big) \Big| \nonumber \\
	&  \hspace{1.25cm} + \sup_{0 < h < \delta} \sup_{y \ge 1} \Big[ F \Big( y F^{\leftarrow}( 1 - t - h) + y \delta_1 \Big) - F \Big( y F^{\leftarrow}(1 - t - h) \Big) \Big] \nonumber \\
	& := {\rm I}_n^{\sss(121)} + {\rm I}^{\sss(122)} + {\rm I}_n^{\sss(123)} + {\rm I}^{\sss(124)}.
	\end{align}
We omitted the details because these are same as in the derivation of \eqref{eq:decomposition:In11}.
The Glivenko-Cantelli Theorem  implies that there exists a large integer $N^{\sss(2)}$ such that
	\begin{align}
	{\rm I}_n^{\sss(121)} + {\rm I}_n^{\sss(123)} < \delta
	\end{align}
for all $n \ge N^{\sss(2)}$.

The term ${\rm I}^{\sss(122)}$ and term ${\rm I}^{\sss(124)}$ can be treated in the exact same way. We therefore focus only on the latter term. Note that
	\begin{align}
	{\rm I}^{\sss(124)} & \le \sup_{0 < h < \delta} \sup_{1 \le y \le {\rm C}_\delta}  \Big[ F \Big( y F^{\leftarrow}( 1- t - h) + y \delta_1 \Big) - F \Big( y F^{\leftarrow}(1 - t - h) \Big) \Big]  \nonumber \\
	& \hspace{1cm} + \sup_{0 < h < \delta} \sup_{y \ge {\rm C}_\delta}  \Big[ F \Big( y F^{\leftarrow}( 1- t - h) + y \delta_1 \Big) - F \Big( y F^{\leftarrow}(1 - t - h) \Big) \Big] \nonumber \\
	& =: {\rm I}^{\sss(1241)} + {\rm I}^{\sss(1242)}.
	\end{align}
The choice  of ${\rm C}_\delta$ in \eqref{eq:vanishing:tail} and $\delta_1$ in \eqref{eq:uniform:continuity:distn:function} imply that ${\rm I}^{\sss(1241)} < \delta$. It is easy to see that ${\rm I}^{\sss(1242)} < 2 \sup  \sup_{0 < h < \delta} \sup_{y \ge {\rm C}_\delta} \overline{F}(y F^{\leftarrow}( 1 - t - h)) = 2 \overline{F}({\rm C}_\delta F^{\leftarrow}( 1- t - \delta)) < \delta$ which follows from the choice of ${\rm C}_\delta$ in \eqref{eq:vanishing:tail}. Therefore, we obtain
	\begin{align}
	{\rm I}^{\sss(124)} < 2 \delta.
	\end{align}
Combining all these facts, we arrive at
	\begin{align}
	{\rm I}_n^{\sss(12)} < 5 \delta \mbox{ almost surely for all } n> N^{\sss(2)}. \label{eq:upperbound:IN:1112}
	\end{align}
\noindent {\bf Step 3: term ${\rm I}_n^{\sss(13)}$}.  We have the almost sure upper bound on ${\rm I}_n^{\sss(13)}$
	\begin{align}
	& \sup_{0 < h < \delta} \sup_{y \ge 1} \Big| F_n \Big( y F^{\leftarrow}(1 - t - h) \Big) - F_n \Big( y F^{\leftarrow}(1 - t) \Big) \Big| \nonumber \\
	& \le \sup_{0 < h < \delta} \sup_{y \ge 1} \Big| F_n \Big( y F^{\leftarrow}(1 - t - h) \Big) - F_n \Big( y F^{\leftarrow}(1 - t) \Big) - F\Big( y F^{\leftarrow}(1 - t) \Big)\nonumber \\
	& \hspace{.5cm} + F \Big( y F^{\leftarrow}(1 - t - h) \Big) \Big|   + \sup_{0 < h < \delta} \sup_{y \ge 1} \Big[ F \Big( y F^{\leftarrow}(1 - t) \Big) - F \Big( y F^{\leftarrow}( 1 - t - h) \Big) \Big] \nonumber \\
	& = {\rm I}_n^{\sss(131)} + {\rm I}^{\sss(132)}.
	\end{align}
We can use the Givenko-Cantelli Theorem once again to show that there exists a large enough integer $N^{\sss(3)}$ such that ${\rm I}_n^{\sss(131)} < \delta$ almost surely for all $n > N^{\sss(3)}$.
We know that $(F^{\leftarrow}(1 - t))_{t \in [\varepsilon, 1]}$ is a uniformly continuous function for every $\varepsilon > 0$. For every $\eta > 0$, there exists a $\lambda$ such that $\sup_{|h| < \lambda} |F^{\leftarrow}(1 - t - h) - F^{\leftarrow}(1 - t)| < \eta.$ We can find a $\widetilde{\delta}$ such that $\sup_{0 <h < \delta} (F^{\leftarrow}( 1 - t) - F^{\leftarrow}( 1 - t - h)) < \widetilde{\delta}$. Therefore, ${\rm I}^{\sss(132)} < \sup_{y \ge 1} \big| F \big( y F^{\leftarrow}( 1 - t) \big) - F\big( y F^{\leftarrow}(1 - t) - y \widetilde{\delta} \big) \big|$. This bound is very similar to the one derived for ${\rm I}^{\sss(124)}$. Hence, the same arguments apply with an appropriately chosen ${\rm C}_{\widetilde{\delta}}$ and we have ${\rm I}^{\sss(132)} < 2\delta$.  Combining these facts, we arrive at
	\begin{align}
	{\rm I}_n^{\sss(13)} < 3 \delta \mbox{ almost surely for all } n \ge N^{\sss(3)}. \label{eq:upperbound:IN:1113}
	\end{align}
Combining \eqref{eq:upperbound:IN:1111}, \eqref{eq:upperbound:IN:1112} and \eqref{eq:upperbound:IN:1113}, we have that $\Iterm{1} \le 13 \delta$ almost surely for all $n \ge {\sf N}_1$, where ${\sf N}_1 = \max(N^{\sss(1)}, N^{\sss(2)}, N^{\sss(3)})$.  Using the fact that $\Iterm{2}$ can be dealt with in a similar way, we see that there exists a sufficiently large integer ${\sf N}_2$ such that, for all $n \ge {\sf N}_2$,
	$$
	\sup_{|h| < \delta} \sup_{y \ge 1} \Big| \overline{F}_n(y X_{n- [nt]:n}) - \overline{F}_n(y X_{n - [n(t + h)] : n}) \Big| < 26 \delta.
	$$
\end{proof}

\begin{proof}[Proof of Proposition~\ref{propn:flt:hills:estimator}]
Using the representation \eqref{eq:hill:uniform:representation} developed in the proof of
Lemma~\ref{lemma:aslimit:hill}, it is enough to establish that
	\begin{align}
	& \sup_{t \in [\vep,1]} \Big| \frac{1}{[nt]} \sum_{i =1}^{[nt]} \log \finv( 1- \unif_{i : n})  - t^{-1} \int_0^t \log \finv(1-s) \dtv s\Big| \asconv 0, \label{eq:flt:hill:decomp:first} \\
	\mbox{ and }\qquad& \sup_{t \in [\vep,1]} \Big| \log \finv( 1 - \unif_{[nt]:n}) - \log \finv(1 - t) \Big| \asconv 0. \label{eq:flt:hill:decomp:second}
	\end{align}
Equation \eqref{eq:flt:hill:decomp:second} can be derived by
combining the Glivenko-Cantelli Theorem for the quantile process
with the continuous mapping theorem (\cite[Theorem~2.3(iii)]{vandervaart:2000}), since $F^{\leftarrow}$ is assumed to be continuous. We are therefore left to prove  \eqref{eq:flt:hill:decomp:first}.
\smallskip


\noindent{\it Proof of \eqref{eq:flt:hill:decomp:first}}.
 We start with the almost sure upper bound of the absolute value in \eqref{eq:flt:hill:decomp:first}:
	\begin{align}
 	& \sup_{t \in [\vep,1]} \Big| \frac{1}{[nt]} \sum_{i=1}^{[nt]} \log F^{\leftarrow} ( 1- \unif_{i : n}) - \frac{1}{[nt]} \sum_{i=1}^n \mbbo(\unif_i \le t) \log F^{\leftarrow} ( 1 - \unif_i)  \Big|  \nonumber \\
	& \hspace{.25cm} + \sup_{t \in [\vep, 1]} \Big| \frac{1}{[nt]} \sum_{i=1}^n \mbbo(\unif_i \le t) \log F^{\leftarrow} ( 1 - \unif_i)  - \frac{1}{t}\int_0^t \log F^{\leftarrow}( 1 - s) \dtv s \Big| \nonumber \\
	& = : {\rm I}^{\sss(1)}_n + {\rm I}^{\sss(2)}_n.
	\end{align}
Below, we use the Glivenko-Cantelli Theorem to show that both terms converge to 0.\\



\noindent {\bf Proof that $\Iterm{1} \asconv 0$}.
It is easy to obtain the almost sure upper bound for $\Iterm{1}$
	\begin{align}
	&  \sup_{t \in [\vep,1]} \frac{1}{[nt]} \sum_{i=1}^n \Big[ \mbbo ( t \le \unif_i \le \unif_{[nt]:n} )
	+  \mbbo(\unif_{[nt]:n} \le \unif_i \le t) \Big] \log F^{\leftarrow}(1 - \unif_i). \label{eq:random:to:deterministic:upper:bound:two}
	\end{align}
Fix $\delta_1 \in (0, \vep)$. Using the Glivenko-Cantelli Theorem for quantile process, there exists  a large integer $N(\delta_1)$ and an event $B$ such that $\prob(B) = 1$ and
	\begin{align*}
	\sup_{t \in [\varepsilon,1]} \Big| \unif_{[nt]: n} - t \Big| \le \delta_1
	\end{align*}
for all $n \ge N(\delta_1)$ on the event $B$. Therefore, on the event $B$, we have the following upper bound for the expression in \eqref{eq:random:to:deterministic:upper:bound:two}:
	\begin{align}
	\sup_{t \in [\vep,1]} \frac{1}{[nt]} \sum_{i=1}^n \mbbo \Big( t - \delta_1 \le \unif_i \le t+ \delta_1 \Big) \log F^{\leftarrow}(1 - t + \delta_1) : = {\rm I}_n^{\sss(11)} \label{eq:random:to:deterministic:upper:bound:three}
	\end{align}
for all $n \ge N(\delta_1)$. In this derivation, we have also used the facts that $F^{\leftarrow}$ is non-decreasing and $\log$ is monotonically increasing. Therefore, to conclude that $\Iterm{1} \asconv 0$, it is enough to show that
	\begin{align}
	{\rm I}_n^{\sss(11)} \asconv 0.
	\end{align}

It is clear that $\sup_{t \in [\vep,1]} \log F^{\leftarrow}(1 - t + \delta_1) = \log F^{\leftarrow}( 1 - \vep + \delta_1) < \infty$ and $\sup_{t \in [\vep,1]} t^{-1} = \vep^{-1}$. Combining these facts, we obtain that, on $B$,
	\begin{align}
	 {\rm I}_n^{\sss(11)} & \le \frac{1}{\vep} \log F^{\leftarrow}(1 - \vep + \delta_1) \sup_{t \in [\vep,1]} \frac{1}{n } \sum_{i=1}^n \mbbo(t - \delta_1 \le \unif_i \le t+ \delta_1) \nonumber \\
	& \hspace{1cm} \asconv \frac{1}{\vep} \Big( \log F^{\leftarrow}( 1 - \vep + \delta_1)  \Big) 2\delta_1, \label{eq:random:to:deterministic:upper:bound:four}
	\end{align}
by the continuous mapping theorem and the Glivenko-Cantelli Theorem. It is clear that the claim ${\rm I}^{\sss(11)}_n \asconv 0$ follows by letting $\delta_1 \to 0$ in \eqref{eq:random:to:deterministic:upper:bound:four}.
\smallskip


\noindent {\it Proof that $\Iterm{2} \asconv 0$}. Note that
	$$
	\frac{1}{t} \int_0^t \log F^{\leftarrow}(1-s) \dtv s = \frac{1}{t} \exptn \Big( \mbbo(\unif \le t ) \log F^{\leftarrow}(1 - \unif) \Big),
	$$
where $\unif$ is a Uniform$[0,1]$ random variable.
We define $f_t (x) = \frac{1}{t} \mbbo(x \le t) F^{\leftarrow}( 1 - x)$ for all $t \in [\vep,1]$. Let ${\cal F}_\vep = \{f_t : t \in [\vep,1]\}$. It has been proved in \eqref{eq:finite:expectation} that $f_t$ is integrable for all $t \in [\vep,1]$.
Note that ${\rm I}^{\sss(2)}_n$ is bounded from above by
	\begin{align*}
	&  \sup_{t \in [\vep,1]} \frac{nt}{[nt]} \sup_{f \in {\cal F}_\vep} \big| \int f \dtv F_n^{\sss (\unif)} - \int f \dtv F^{\sss (\unif)} \big|
	+ \sup_{t \in [\vep,1]} \big| \frac{nt}{[nt]} - 1 \big| \sup_{f \in {\cal F}_\vep} |\int f \dtv F^{\sss (\unif)}| \nonumber \\
	& \le 2 \sup_{f \in {\cal F}_\vep} |\int f \dtv F_n^{\sss (\unif)} - \int f \dtv F^{\sss (\unif)}| + \Big( \int_0^1  \frac{1}{\vep} \log F^{\leftarrow}(1 - x) \dtv x \Big) \sup_{t \in [\vep,1]} \Big| \frac{nt}{[nt]} - 1 \Big| \nn\\
	& := {\rm I}_n^{\sss(21)} + {\rm I}_n^{\sss(22)}
	\end{align*}
almost surely  as $f_t(x) \le \frac{1}{\vep} \log F^{\leftarrow}(1-x) $ for all $t \in [\vep,1]$. It is easy to see that ${\rm I}_n^{\sss(22)}$ is deterministic and ${\rm I}_n^{\sss(22)} \to 0$.

To estimate the term ${\rm I}_n^{\sss(21)}$, we use general empirical process theory. We start by defining the appropriate Glivenko-Cantelli classes. Define, for every measurable and integrable function $f$,
	$$
	\norm{f} = \int_0^1 |f(u)| \dtv u.
	$$
Let ${\cal L}$ denote the class of all functions such that $\norm{f} < \infty$.  Given two functions $l$ and $u$, the bracket $[l,u]$ denotes the class of all functions $f$ such that $l \le f \le u$. A $\delta$-bracket in ${\cal L}$ is a bracket $[l_\delta, u_\delta]$ such that
	\begin{align}
	\int |u_\delta - l_\delta| \dtv F^{\sss (\unif)} \le \delta. \label{eq:delta:bracket:defn}
	\end{align}
Fix $\delta > 0$. The bracketing number ${\cal N}_{[]}(\delta, {\cal F}_\vep, {\cal L})$
is the minimum number of $\delta$-brackets needed to cover ${\cal F}_\vep$. Note that $l_\delta$ and $u_\delta$ may not be elements of ${\cal F}_\vep$. ${\cal F}_\vep$ is said to be an $F^{\sss (\unif)}$-Glivenko-Cantelli class of functions when
	\begin{align}
	\sup_{f \in {\cal F}_\vep} \big| \int f \dtv F_n^{\sss (\unif)} - \int f \dtv F^{\sss (\unif)} \big| \asconv 0. \label{eq:glivenko:cantelli:class:version}
	\end{align}
According to \cite[Theorem~19.4]{vandervaart:2000}, the class of functions ${\cal F}_\vep$ is $F^{\sss (\unif)}$-Glivenko-Cantelli when ${\cal N}_{[]}(\delta, {\cal F}_\vep, {\cal L}) < \infty$ for every $\delta > 0$. Therefore, we need to show that ${\cal N}_{[]}(\delta, {\cal F}_\vep, {\cal L}) < \infty$ for every $\delta>0$ to establish that ${\rm I}_n^{\sss(21)} \asconv 0$.

We complete the proof by providing a finite upper bound ${\rm R}$ on ${\cal N}_{[]}(\delta, {\cal F}_\varepsilon, {\cal L})$. Define ${\cal I}_\vep = \int_0^\vep \log F^{\leftarrow}( 1 - x) \dtv x$ and $M_\vep = \sup_{x \in [\vep,1]} \log F^{\leftarrow}(1 - x)$. Fix ${\rm R}$, and consider a partition $(z_i)_{0 \le i \le {\rm R}}$ such that $\vep = z_0 < z_1 < z_2 < \cdots < z_{\rm R} = 1$ and
	\eqn{
	\label{zi-construction}
	\max_{1 \le i \le {\rm R}}(z_i - z_{i-1}) < \frac{\delta \vep^2}{{\cal I}_\vep + M_\vep}.
	}
Note that ${\rm R}$ may depend on $\delta$, but it is always possible to choose a finite ${\rm R}$ satisfying the above conditions.

By the above partition, $(f_t)_{t \in [\vep,1]} = \bigcup_{i=1}^{\rm R} \{f_t \colon t \in [z_{i-1}, z_i]\}$.  Recall that $f_t (x) = \frac{1}{t} \mbbo(x \le t) F^{\leftarrow}( 1 - x)$ for all $t \in [\vep,1]$. Therefore, $(f_t)_{t \in [z_{i}, z_{i+1}]}\subseteq [l^{\sss(i)}_{\delta}, u_{\delta}^{\sss(i)}],$ where $x\mapsto l_\delta^{\sss(i)}(x)$ and $x\mapsto u_\delta^{\sss(i)}(x)$ that appear in the bracket are defined as
	$$
	l_\delta^{\sss(i)}(x) =\frac{1}{z_{i+1}} \mbbo(x\leq z_{i+1}) \log F^{\leftarrow}(1-x),\quad \mbox{ and } \quad u_\delta^{\sss(i)}(x) =\frac{1}{z_{i}} \mbbo(x\leq z_i) \log F^{\leftarrow}(1-x),
	$$
for all $ 1 \le i \le {\rm R}$, since $F^{\leftarrow}(0) \ge 1$.

To show that ${\cal N}_{[]}(\delta, {\cal F}_\vep, {\cal L}) < {\rm R}$, it is enough to show that $[l_\delta^{\sss(i)}, u_\delta^{\sss(i)}]$ satisfies \eqref{eq:delta:bracket:defn} for every $i \ge 1$. Observe that
	\begin{align}
	&\int [u_\delta^{\sss(i)}(x)-l_\delta^{\sss(i)}(x)]\dtv F^{\sss (\unif)}(x)\nonumber\\
	&= \int_0^1 \Big( \frac{1}{z_{i}} \mbbo(x \le z_{i+1}) \log F^{\leftarrow}(1-x) - \frac{1}{z_{i+1}} \mbbo(x \le z_i) \log F^{\leftarrow}(1-x) \Big) \dtv x \nonumber \\
	& = \frac{z_{i+1} - z_i}{z_i z_{i+1}} \Big( \int_0^\vep \log F^{\leftarrow}( 1- x) \dtv x  + \int_\vep^{z_i} \log F^{\leftarrow}(1- x) \dtv x \Big) \nonumber \\
	& \hspace{2cm} + \frac{1}{z_i} \int_{z_i}^{z_{i+1}} \log F^{\leftarrow}(1 - x) \dtv x \nonumber \\
	& \le \frac{z_{i+1} - z_i}{\vep^2} ( {\cal I}_\vep + ( 1 - \vep) M_\vep) + \frac{z_{i +1} - z_i}{\vep} M_\vep  < \delta,
	\end{align}
where the final inequality follows by \eqref{zi-construction}.
This implies that ${\cal N}_{[]}(\delta, {\cal F}_\vep, {\cal L}) < {\rm R} < \infty$, and we conclude that ${\cal F}_\vep$ is a $F^{\sss (\unif)}$-Glivenko-Cantelli class. Hence, \eqref{eq:glivenko:cantelli:class:version} holds.
This in turn implies that ${\rm I}_n^{\sss(21)}\asconv 0.$
\end{proof}


\subsection{Proofs of Lemmas~\ref{lem-inf-t[vep,1]} and \ref{lemma:sublinear:growth:noteventually:pareto}}
\label{sec-proofs-lemma47-48}
\begin{proof}[Proof of Lemma~\ref{lem-inf-t[vep,1]}]
Proposition~\ref{prop:tightness:suprema} implies tightness of the sequence $(\overline{Z}_n(t))_{t \in [\vep, 1]}$ by \cite[Theorem~8.2 ]{billingsley:2013}. We have derived the convergence of finite-dimensional distributions in Proposition~\ref{propn:dnk:functional:limit}. According to \cite[Theorem~13.4.1]{whitt2002stochastic}, the supremum is a continuous functional in the $J_1$-topology. This completes the proof of  Lemma~\ref{lem-inf-t[vep,1]}.
\end{proof}
			
\begin{proof}[Proof of Lemma~\ref{lemma:sublinear:growth:noteventually:pareto}]
We first note that
	\begin{align}
	Z \big(\overline{F}(x), y \big) & = y^{- \alpha} \frac{L(yx)}{L(x)} - y^{ - \overline{F}(x) \Big( \int_0^{\overline{F}(x)} \log \frac{F^{\leftarrow}(1 - s)}{x} \dtv s \Big)^{-1}} \nonumber \\
	& =: y^{- \alpha} \Big[ \frac{L(yx)}{L(x)} - y^{ {\cal U}(x)} \Big] := y^{- \alpha} K(x,y),
	\end{align}
where ${\cal U}(x)$ is defined in \eqref{eq:defn:calu}.
%
Define $\mathbb{V} := \{x \in (0, \infty) \colon {\cal U}(x) = 0\}$. There are two possibilities, depending on whether $\mathbb{V} = \varnothing$ or $\mathbb{V} \neq \varnothing$. We deal with each of these two cases separately. The former case will be divided into two subcases and we directly verify \eqref{eq:aim:positive:limit} for each of the subcases, whereas a contradiction argument will be used to deal with the latter case.\\

\noindent {\bf Case-I: ${\mathbb V} = \varnothing$}. Our aim is to show that, for every $\varepsilon > 0$,
	\begin{align}
 	\inf_{x \in [F^{\leftarrow}(0), F^{\leftarrow}(1 - \varepsilon)]} \sup_{y \ge 1}| Z(\overline{F}(x), y)| > 0. \label{eq:aim:positive:limit}
	\end{align}
There are two possibilities, depending on whether ${\cal U}(x)>0$ or ${\cal U}(x) < 0$ for all $x \in [F^{\leftarrow}(0), F^{\leftarrow}(1 - \varepsilon)]$. Indeed, suppose that ${\cal U}(x)$ changes its sign when $x \in [F^{\leftarrow}(0), F^{\leftarrow}(1- \varepsilon)]$. Then, by continuity of $x\mapsto {\cal U}(x)$, we conclude that $\mathbb{V} \neq \varnothing$, which is a contradiction to our assumption. If we assume that ${\cal U}(x)$ is positive for all $x \in [F^{\leftarrow}(0), F^{\leftarrow}(1 - \varepsilon)]$, then $\mathbb{V} = \varnothing$ implies that
	\begin{align}
	\inf_{x \in [F^{\leftarrow}(0), F^{\leftarrow}(1- \varepsilon)]}  {\cal U}(x) >0. \label{eq:positive:ux}
	\end{align}

Suppose instead that ${\cal U}(x)$ is negative for all $x \in [F^{\leftarrow}(0), F^{\leftarrow}(1 - \varepsilon)]$. Then $\mathbb{V} = \varnothing$ implies that
	\begin{align}
	\sup_{x \in [F^{\leftarrow}(0), F^{\leftarrow}(1 - \varepsilon)]} {\cal U}(x) < 0. \label{eq:negative:ux}
	\end{align}
We shall deal with each of these two conditions separately.
\smallskip

\noindent{\bf Assume that \eqref{eq:positive:ux} holds}.  Define $\varrho := \inf_{x \in [F^{\leftarrow}(0), F^{\leftarrow}(1- \varepsilon)]} {\cal U}(x) >0$. As $F^{\leftarrow}(0) \ge 1$, we can use Potter's bound (see \cite[Proposition~0.8(ii)]{resnick:1986}) for $L(yx)$ for large enough $y$. Fix $\varsigma \in (0, \varrho)$. Then there exists a large positive number $y_0$ such that
	\begin{align}
	\frac{L(yx)}{L(x)} \le \frac{(yx)^{\varsigma}}{\inf_{u \in [F^{\leftarrow}(0), F^{\leftarrow}(1- \varepsilon)]} L(u)}
	\le y^{\varsigma} \frac{\big( F^{\leftarrow}(1 - \varepsilon) \big)^{\varsigma}}{\inf_{u \in [F^{\leftarrow}(0), F^{\leftarrow}(1- \varepsilon)]} L(u)}
	\label{eq:lower:bound:ratio:slowlyvarying}
	\end{align}
for all $x \in [F^{\leftarrow}(0), F^{\leftarrow}(1- \varepsilon)]$ and $y \ge y_0$. It is easy to see that, for every $y \ge 1$,
	\begin{align}
	\exp \Big\{  {\cal U}(x) \log y \Big\} \ge y^{\varrho}.
	\label{eq:lower:bound:exp}
	\end{align}
Combining \eqref{eq:lower:bound:ratio:slowlyvarying} and \eqref{eq:lower:bound:exp}, we obtain that
	\begin{align}
	&\inf_{x \in [F^{\leftarrow}(0), F^{\leftarrow}(1 - \varepsilon)]} y^{- \alpha}|K(x,y)| \nonumber\\
	& = \inf_{x \in [F^{\leftarrow}(0), F^{\leftarrow}(1- \varepsilon)]} y^{- \alpha} \max \big( K(x,y), - K(x,y) \big) \nonumber \\
	& \ge \inf_{x \in [F^{\leftarrow}(0), F^{\leftarrow}(1- \varepsilon)]} y^{- \alpha} \big(-K(x,y) \big) \nonumber \\
	& \ge y^{\varsigma - \alpha} \Big( y^{\varrho - \varsigma} - \frac{\big( F^{\leftarrow}(1- \varepsilon) \big)^{\varsigma}}{\inf_{u \in [F^\leftarrow(0), F^{\leftarrow}(1 - \varepsilon)]} L(u)} \Big). 	
	\label{eq:lower:bound:kxy:positive}
	\end{align}
It is easy to see that we can choose $y$ large enough so that the lower bound derived in \eqref{eq:lower:bound:kxy:positive} becomes positive and hence \eqref{eq:aim:positive:limit} holds.
\smallskip

\noindent{\bf Assume that \eqref{eq:negative:ux} holds}. Define $\varrho := \sup_{x \in [F^{\leftarrow}(0), F^{\leftarrow}(1 - \varepsilon)]}  {\cal U}(x) < 0$. We can use Potter's bound once again.  Fix $\varsigma \in (0, -\varrho)$. This means that there exists an $y_0 \ge 1$ such that, for all $x \in [F^{\leftarrow}(0), F^{\leftarrow}(1 - \varepsilon)]$ and $y \ge y_0$ as $F^{\leftarrow}(0) \ge 1$,
	\begin{align}
	\frac{L(xy)}{L(x)} \ge y^{- \varsigma} \frac{\big( F^{\leftarrow}(1 - \varepsilon) \big)^{- \varsigma}}{\sup_{u \in [F^{\leftarrow}(0), F^{\leftarrow}(1 - \varepsilon)]} L(u)}.
	\label{eq:ratio:slowlyvarying:lowerbound:negative}
	\end{align}
It is clear that, for every $y \ge 1$ and $x \in [F^{\leftarrow}(0), F^{\leftarrow}(1 - \varepsilon)]$,
	\begin{align}
	\exp \Big\{   {\cal U}(x) \log y \Big\} \le y^{ \varrho},
	\label{eq:exponential:lowerbound:negative}
	\end{align}
where we recall that $\varrho<0$. Combining \eqref{eq:ratio:slowlyvarying:lowerbound:negative} and \eqref{eq:exponential:lowerbound:negative}, we obtain that
	\begin{align}
	&\inf_{x \in [F^{\leftarrow}(0), F^{\leftarrow}(1 - \varepsilon)]} y^{-\alpha} |K(x,y)| \nonumber\\
	& = \inf_{x \in [F^{\leftarrow}(0), F^{\leftarrow}(1 - \varepsilon)]} y^{- \alpha}\max( K(x,y), - K(x,y)) \nonumber \\
	& \ge \inf_{x \in [F^{\leftarrow}(0), F^{\leftarrow}(1 - \varepsilon)]}  y^{- \alpha} K(x,y) \nonumber\\
	& \ge y^{\varsigma - \alpha} \Big( y^{- \varrho - \varsigma} \frac{\big( F^{\leftarrow}(1 - \varepsilon) \big)^{- \varsigma}}{ \sup_{u \in [F^{\leftarrow}(0), F^{\leftarrow}(1 - \varepsilon)]} L(u)}  - 1 \Big).  	
	\label{eq:negative:lowerbound:kxy}
	\end{align}
Note that since $(- \varrho - \varsigma) >0$, we can make the right hand side of \eqref{eq:negative:lowerbound:kxy} positive by choosing $y$ large enough, and hence \eqref{eq:aim:positive:limit} holds. \\

\noindent{\bf Case-II: ${\mathbb V} \neq \varnothing$}. 
Note that $x \mapsto  (|Z(\overline{F}(x),y)|)_{y\geq 1}$ is a collection of continuous functions. Invoking \cite[Proposition 1.26(a)]{rockafellar:wets:1998}, we conclude that $x \mapsto \overline{Z}(\overline{F}(x)) :=\sup_{y \ge 1} |Z(\overline{F}(x),y)|$ is a lower semicontinuous function.
%
%
%

We now suppose that $\inf_{x \in [F^{\leftarrow}(0), F^{\leftarrow}(1 - \vep)]} \sup_{y \ge 1}| Z(\overline{F}(x), y)| = 0$ for some $\vep > 0$. Using lower semicontinuity of $x\mapsto \overline{Z}(\overline{F}(x))$ and the fact that  $[F^{\leftarrow}(0), F^{\leftarrow}(1 - \vep)]$ is a compact set, the Bolzano-Weierstrass Theorem implies that there exists an $x_0$ such that $\sup_{y \ge 1}| Z(\overline{F}(x_0), y)| = 0$. Then we must have that $x_0 \in \mathbb{V}$ i.e., ${\cal U}(x_0) = 0$. In that case,
	\begin{align}
	\sup_{y \ge 1} y^{- \alpha}\Big| \frac{L(x_0 y)}{L(x_0)} - 1  \Big| = 0,
	\end{align}
which implies that $L(y) = L(x_0)$ for all $y \ge x_0$ and hence $F$ is eventually Pareto. This is a contradiction to the assumption in Theorem~\ref{thm-sublinear} and so \eqref{eq:aim:positive:limit} follows in this case.
\end{proof}

\section{Eventually Pareto case: Proof of Proposition \ref{PROP-EVENT-PARETO}}
\label{app-eventually-Pareto}
In this appendix, we prove Proposition \ref{PROP-EVENT-PARETO}. We argue by contradiction as follows.
Recall $x_0$ in Definition \ref{def-eventually-Pareto}. If we can show that the limit of $D_{n, \kappa_n^\star}$ is positive on the event $\{\kappa_n^\star \ge n \overline{F}(x_0 - \vep)\}$, then we obtain a contradiction to $D_{n,\kappa_n^\star} \probconv 0$ (recall Theorem~\ref{thm:probconv:DKN}). Therefore, we conclude $\lim_{n \to \infty} \prob(\kappa_n^\star \ge n \overline{F}(x_0 - \vep)) = 0$ following the same argument as in \eqref{eq:upper:bound:reduced:aim}.  The rest of the proof is dedicated to the proof that the limit of $D_{n, \kappa_n^\star}$ is positive on the event $\{\kappa_n^\star \ge n \overline{F}(x_0 - \vep)\}$ when $F$ is eventually Pareto after $x_0$, but not before $x_0$.

By Lemma \ref{lem-inf-t[vep,1]} (recall also \eqref{Dn-conv} in Theorem \ref{thm-sublinear}),
	\eqn{
	\inf_{x\leq x_0-\vep} D_{n_k,[n_k\bar{F}(x)]}\probconv \inf_{x\leq x_0-\vep} \sup_{y\geq 1} y^{-\alpha}\Big|y^{{\cal U}(x)}-\frac{L(xy)}{L(x)}\Big|. \label{eq:lim:dist:eventially:pareto}
	}
Our aim is to show that the limit is strictly positive by \eqref{positive-Dn} (recall also the proof of Lemma \ref{lemma:sublinear:growth:noteventually:pareto} in Section \ref{sec-proofs-lemma47-48}), since $\bar{F}(x)$ is {\em not} Pareto for $x\leq x_0-\vep$.  Note that $x_0$ is the smallest positive number satisfying
	\begin{align}
	\sup_{x \ge x_0} |L(x) - L(x_0)| = 0. \label{eq:minimality:defn:evetually:pareto}
	\end{align}
We shall obtain a contradiction to the minimality of $x_0$ if the right hand side of \eqref{eq:lim:dist:eventially:pareto} vanishes.

We assume that the right hand side of \eqref{eq:lim:dist:eventially:pareto} equals $0$. It has been proved in {\bf Case-I} of the proof of Lemma~\ref{lemma:sublinear:growth:noteventually:pareto} that the limit is positive if ${\mathbb V} = \{ x \ge [1, x_0 - \vep]\colon {\cal U}(x) = 0\} = \varnothing$ (see \eqref{eq:defn:calu} for ${\cal U}(x)$).
Therefore, when $\inf_{x\leq x_0-\vep} D_{n_k,[n_k\bar{F}(x)]}\probconv 0$, we must have that ${\mathbb V} \neq \varnothing$. We now use the arguments in { \bf Case-II} of the proof of Lemma~\ref{lemma:sublinear:growth:noteventually:pareto} to prove the existence of an element $x_1 \in {\mathbb V}$ such that
	\begin{align}
 	\sup_{y \ge 1} y^{-\alpha} \Big| y^{{\cal U}(x_1)} - \frac{L(yx_1)}{L(x_1)} \Big| = 0. \label{eq:equation:x1}
	\end{align}
As $x_1 \in {\mathbb V}$, we have that ${\cal U}(x_1) = 0$,  so that \eqref{eq:equation:x1} reduces to
	\begin{align}
	\sup_{x \ge x_1} |L(x) - L(x_1)| = 0. \label{eq:contradiction:minimality}
	\end{align}
Thus, \eqref{eq:contradiction:minimality} contradicts the minimality of $x_0$ in \eqref{eq:minimality:defn:evetually:pareto} as $x_1 \in [1, x_0 - \vep]$ and $\vep > 0$. Hence, the limit in \eqref{eq:lim:dist:eventially:pareto} must be positive. This proves Proposition \ref{PROP-EVENT-PARETO}.
\qed


\end{document}